\tikzset{node distance=2cm, auto}
\tikzstyle{vertex}=[circle, draw, inner sep=0pt, minimum size=6pt]
\numberwithin{equation}{section}
\newtheorem*{theorem*}{Theorem}
\newtheorem*{corollary*}{\bf Corollary}
\newtheorem*{remark*}{\bf Remark}
\newtheorem{theorem}{Theorem}[section]
\newtheorem{corollary}[theorem]{Corollary}
\newtheorem{example}[theorem]{Example}
\newtheorem{lemma}[theorem]{Lemma}
\newtheorem{proposition}[theorem]{Proposition}
\newcommand{\eat}[1]{}
\title[torus quotients of Schubert varieties in the Grassmannian $G_{2, n}$]
{torus quotients of Schubert varieties in the Grassmannian $G_{2, n}$}
\author{S. Senthamarai Kannan}
\address{%
S. Senthamarai Kannan\\
Chennai Mathematical Institute \\
Plot H1, SIPCOT IT park,Siruseri,\\
 Kelambakkam-603103 \\
India\\
Email: kannan@cmi.ac.in\\
}
\author{Arpita Nayek}
\address{%
Arpita Nayek\\
Chennai Mathematical Institute \\
Plot H1, SIPCOT IT park,Siruseri,\\
Kelambakkam-603103 \\
India\\
Email: arpitan@cmi.ac.in\\
}
\author{Pinakinath Saha}
\address{%
Pinakinath Saha\\
Tata Inst. of Fundamental Research\\
Homi Bhabha Road, Colaba\\
Mumbai 400005\\
India\\
Email: psaha@math.tifr.res.in\\
}
\subjclass[2010]{14M15}  
\begin{document}
\maketitle
\begin{abstract}
Let $G=SL(n, \mathbb{C}),$ and $T$ be a maximal torus of $G,$ where $n$ is a positive even integer. In this article, we study the GIT quotients of the Schubert varieties in the Grassmannian $G_{2,n}.$ We prove that the GIT quotients of the Richardson varieties in the minimal dimensional Schubert variety admitting stable points in $G_{2,n}$ are projective spaces (see  Proposition \ref{lemma3.5} and Proposition \ref{prop3.6}). Further, we prove that the GIT quotients of certain Richardson varieties in $G_{2,n}$ are projective toric varieties. Also, we prove that the GIT quotients of the Schubert varieties in $G_{2,n}$ have at most finite set of singular points. Further, we have computed the exact number of singular points of the GIT quotient of $G_{2,n}.$ 
\end{abstract}

\keywords{Key words: ~Line bundle,~ Grassmannian, ~Schubert variety, ~Semistable points,~GIT-quotient}

\section{Introduction}

Let $G=SL(n, \mathbb{C}).$  Let $T$ be a maximal torus of $G$, and $R$ be the set of roots with respect to $T.$ Let $R^{+}$ be a set of positive roots. Let $B$ (respectively, $B^{-}$) be the Borel subgroup of $G$ containing $T,$ corresponding to $R^{+}$ (respectively, $-R^{+}$). Let $S= \{\alpha_1,\ldots,\alpha_{n-1}\}$ denote the set of simple roots in $R^{+}$. Let  $P^{\alpha_{r}}$ denote the maximal parabolic subgroup of $G$ corresponding to the set $S\setminus\{\alpha_{r}\},$ for $1\le r\le n-1.$ Then  $G/P^{\alpha_{r}}$ is the Grassmannian parameterizing $r$-dimensional subspaces of $\mathbb{C}^{n}.$ We denote it by $G_{r,n}.$

For the action of  $T$ on the Grassmannian $G_{r,n},$ the GIT quotients have been studied extensively. In \cite{HK}, Hausmann and Knutson identified the GIT quotient of the Grassmannian  $G_{2,n}$ by $T$  with the moduli space of polygons in $\mathbb{R}^{3}.$ Also, they showed that GIT quotient of $G_{2,n}$ by $T$ can be realized as the GIT quotient of an $n$-fold product of projective lines by the diagonal action of $PSL(2, \mathbb{C}).$ In the symplectic geometry literature, these spaces are known as polygon spaces as they parameterize the $n$-sides polygons in $\mathbb{R}^{3}$ with fixed edge length up to rotation. More generally, GIT quotient of $G_{r,n}$ by $T$ can be identified with the GIT quotient of $(\mathbb{P}^{r-1})^{n}$ by the diagonal action of $PSL(r, \mathbb{C})$ called the Gel'fand-Macpherson correspondence. In \cite{Kap1}, \cite{Kap2}, Kapranov studied the Chow quotient of the Grassmannians and showed that Grothendieck-Knudsen moduli space $\overline{M_{0,n}}$ of stable $n$-pointed curves of genus zero arises as the Chow quotient of $T$ action on the Grassmannian $G_{2,n}.$ 

For a parabolic subgroup $Q$ of $G,$ Howard \cite{How} considered the problem of determining which line bundles on $G/Q$ descend to ample line bundles of the GIT quotient of $G/Q$ by $T.$ Howard showed that when $\mathcal{L}(\chi)$ is a very ample line bundle on $G/Q$ (so the character $\chi$ of $T$ extends to $Q$ and to no larger subgroup of $G$) and $H^{0}(G/Q,\mathcal{L}(\chi))^{T}$ is non-zero, the line bundle descends to the quotient \cite{How}. He extended the results to the case when the $T$-linearization of $\mathcal{L}(\chi)$ is twisted by $\psi,$ a character of $T.$ He proved that the line bundle $\mathcal{L}(\chi)$ twisted by $\psi$ descends to the GIT quotient provided the $\psi$-weight space of $H^{0}(G/Q, \mathcal{L}(\chi))$ is non-zero and this is so when $\chi-\psi$ is in the root lattice and $\psi$ is in the convex hull of the Weyl group orbit of $\chi.$ This was extended to any simple algebraic group by Kumar \cite{Kum}.   

In \cite{Sko}, Skorobogatov gave a combinatorial description using Hilbert-Mumford criterion, when a point in $G_{r,n}$ is semistable with respect to the $T$-linearized line bundle $\mathcal{L}(\omega_r).$  As a corollary he showed that when $r$ and $n$ are coprime semistablity is the same as stablility. Independently, in \cite{K1} and \cite{K2}, first named author gave a description of parabolic subgroups $Q$ of a simple algebraic group $G$ for which there exists an ample line bundle $\mathcal{L}$ on $G/Q$ such that $(G/Q)^{ss}_T(\mathcal{L})$ is the same as $(G/Q)^{s}_T(\mathcal{L}).$ In particular, in the case when $G=SL(n,\mathbb{C})$ and $Q=P^{\alpha_{r}},$ first named author showed that $(G_{r,n})^{ss}_T(\mathcal{L})$ is the same as $(G_{r,n})^{s}_T(\mathcal{L})$ if and only if $r$ and $n$ are co-prime.

In \cite{KS}, first named author and Sardar studied torus quotient of Schubert varieties in $G_{r,n}.$ They showed that $G_{r,n}$ has a unique minimal dimensional Schubert variety $X(w)$ admitting semistable points with respect to the $T$-lineararized bundle $\mathcal{L}(\omega_{r}),$ and gave a combinatorial characterization of the Weyl group element $w.$ In \cite{KP}, first named author and Pattanayak extended  the results to the case when $G$ is of type $B, C,$ or $D$ and when $P$ is a maximal parabolic subgroup of $G.$ 

In \cite{K1}, there was an attempt to study the projective normality of $T\backslash\backslash (G_{2,n})^{ss}_T(\mathcal{L}(n\omega_2))$ ($n$ is odd) with respect to the descent of the line bundle $\mathcal{L}(\omega_2).$ There it was proved that the homogeneous coordinate ring of $T\backslash\backslash G_{2,n}$ is a finite module over the subring generated by the degree one elements. In \cite{HMSV1}, Howard et al. showed that $T\backslash\backslash (G_{2,n})^{ss}_T(\mathcal{L}(\frac{n}{2}\omega_2))$ is projectively normal with respect to the descent of the line bundle $\mathcal{L}(\frac{n}{2}\omega_2)$ for $n$ is even, and $T\backslash\backslash (G_{2,n})^{ss}_T(\mathcal{L}(n\omega_2))$ is projectively normal with respect to the descent of the line bundle $\mathcal{L}(n\omega_2)$ for $n$ is odd. In \cite{BKS}, the authors use combinatorial method to show that $T\backslash\backslash (G_{2,n})^{ss}_T(\mathcal{L}(n\omega_2))$ is projectively normal with respect to the descent of the line bundle $\mathcal{L}(n\omega_2)$ for $n$ is odd.  In \cite{NP}, the authors use graph theoretic technique to give a short proof of the projective normality of $T\backslash\backslash (G_{2,n})^{ss}_T(\mathcal{L}(n\omega_2))$ for general $n$.

In \cite{HMSV2}, the authors studied the generators and relations for the GIT quotient of the Grassmannian $G_{2,n},$ $n$ is even, with respect to the action of $T$ using Gel'fand-Macpherson correspondence and they prove that the relations between the generators are quadratic except $n=6$. In this article, we study the GIT quotients of Schubert varieties in $G_{2,n}$ for $n$ even. We study the generators and relations of the homogeneous coordinate rings of the GIT quotients of the Schubert varieties in $G_{2,6}, G_{2,8},$ $G_{2,10}$ with respect to the action of $T$. We use  different method to give the explicit relations between them. 

Using Gel'fand-Macpherson correspondence, it is well-known that the GIT quotient of $G_{2,n}$ with respect to $T$ has finite number of singular points when $n$ is even  (see \cite[Example 8.8, p.158-159]{Mumford}). In this article, we study the singularities of the GIT quotients of the Schubert varieties in $G_{2,n}.$ We prove that they have at most finite number of singular points. Furthermore, using different method we show that the GIT quotient of $G_{2,n}$ has precisely half of the number of elements in  $W^{S\setminus\{\alpha_{\frac{n}{2}}\}}$ singular points.

The layout of the paper is as follows. In \cref{section2}, we recall some preliminaries on Algebraic groups, Lie algebras, Standard Monomial Theory, and Geometric Invariant Theory. In \cref{section3}, we prove that a Schubert variety $X(w)$ in $G/P$ admits stable point if and only if $w(a\chi)<0$ where $\chi$ is a  dominant character of $P$ such that $a\chi$ is in the root lattice (see \cref{lemma3.1}). By using this crucial lemma we find out the minimal dimensional Schubert variety admitting stable points in $G_{2,n}$ for $n$ even. Further, we prove that the GIT quotient of the minimal dimensional Schubert variety admitting stable points in $G_{2,n}$ is the projective space $\mathbb{P}^{\frac{n}{2}-1}$ (see Proposition \ref{lemma3.5}). Furthermore, we prove that the GIT quotients of the Richardson varieties in the minimal dimensional Schubert variety admitting stable point are projective spaces (see Proposition \ref{prop3.6}) and  the GIT quotients of certain Richardson varieties are toric varieties (see Corollary \ref{Cor 3.12}). In \cref{section4}, we study the structure of the GIT quotient of $G_{2,6}$. In \cref{section5}, and \cref{section6}, we study the structure of the GIT quotient of $X(6,8)$ in $G_{2,8}$ and $X(7,10)$ in $G_{2,10}$ respectively. In \cref{section7}, we prove that the GIT quotient of a Schubert variety $X(w)$ in $G_{2,n}$ has at most finite quotient singularities. Furthermore, we have shown that the GIT quotient of $G_{2,n}$ has precisely half of the number of elements in  $W^{S\setminus\{\alpha_{\frac{n}{2}}\}}$ singular points (see Corollary \ref{cor7.10}).      

\section{Notation and Preliminaries}\label{section2}
In this section, we set up some notation and preliminaries. We refer to \cite{Hum1}, \cite{Hum2}, \cite{Jan}, \cite{Mumford}, \cite{New}, \cite{LS} for preliminaries in Algebraic groups, Lie algebras, Standard Monomial Theory, and Geometric Invariant Theory.

Let $G, B,B^{-}, R,R^{+}, S, T,$ and $W$ be as in the introduction. The simple reflection in $W$ corresponding to $\alpha_i$ is denoted by $s_i$. For a subset $I$ of $S,$  we denote the parabolic subgroup of $G$ generated by $B$ and $\{n_{\alpha} : \alpha \in I\}$ by $P_{I},$ where $n_{\alpha}$ is a representative of $s_{\alpha}$ in $N_{G}(T).$  
We denote $P_{S\setminus I}$ by $P^{I.}$ In particular, $P_{S\backslash \{\alpha_r\}}$ is denoted by $P^{\alpha_r}.$ Note that all the standard maximal parabolic subgroups of $G$ are of the form $P^{\alpha_{r}}$ for some $1\le r\le n-1.$ Let $W_{P_{I}}$ be the subgroup of $W$ generated by $\{s_{\alpha}: \alpha \in I\}.$ We note that $W_{P_{I}}$ is the Weyl group of $P_{I}.$ For $I \subseteq S,$ $W^{P_{I}}=\{w \in W| w(\alpha) \in R^{+}, \text{ for all } \alpha \in I\}$ is the set of minimal coset representatives of elements of $W/W_{P_{I}}.$ Further, there is a natural order $W^{P_{I}},$ namely the restriction of the Bruhat order on $W.$ 
Let $I(r,n)=\{(a_{1},a_{2},\ldots, a_{r}): 1\le a_{1}<a_{2}<\cdots <a_{r}\le n\}.$ There is a natural order on $I(r,n),$ given by $(a_{1},a_{2},\ldots, a_{r})\le (b_{1},b_{2},\ldots, b_{r})$ if and only if $a_{i} \leq b_{i}$ for all $1\le i\le r.$ Then there is a natural order preserving indentification of $W^{P^{\alpha_{r}}}$ with $I(r,n)$ the correspondence is given by $w\in W^{P^{\alpha_{r}}}$ mapping to $(w(1),w(2),\ldots, w(r)).$ 

Let $V=\mathbb{C}^{n}.$ Let $\{e_{1},e_{2},\ldots,e_{n}\}$ be the standard basis of $V.$ Let $V_{r}$ be the subspace of $V$ spanned by $\{e_{1},e_{2},\ldots, e_{r}\}.$ Let $G_{r,n}$ be the set of all $r$-dimensional subspaces of $V.$ Then there is a natural projective variety structure on $G_{r,n}$ given by the Pl\"ucker embedding $\pi: G_{r,n}\longrightarrow \mathbb{P}(\wedge^{r}V)$ sending $r$-dimensional subspace to its $r$-th exterior power. Furthermore, there is a $G$-equivariant isomorphism from $G/P^{\alpha_{r}}$ to $G_{r,n},$ sending $gP^{\alpha_{r}}$ to $gV_{r}.$ For the natural action of $T$ on $G/P^{\alpha_{r}},$ the $T$-fixed points of $G/P^{\alpha_{r}}$ are identified with $W^{P^{\alpha_{r}}},$ sending $w$ to $wP^{\alpha_{r}}.$ For $w\in W^{P^{\alpha_r}},$ the closure of the $B$-orbit passing through $wP^{\alpha_{r}}$ in $G/P^{\alpha_{r}}$ has a natural structure of a projective variety called Schubert variety associated to $w,$ denoted by $X(w).$ Let $e_{\underline{i}}=e_{i_1} \wedge e_{i_2} \wedge \cdots \wedge e_{i_r},$ for $\underline{i}=(i_1, i_2, \ldots, i_r) \in I(r,n).$ Then $\{e_{\underline{i}}: \underline{i} \in I(r,n)\}$ forms a basis of $\wedge^rV.$ Let $\{p_{\underline{i}}: \underline{i}\in I(r,n)\}$ be the dual basis of the basis $\{e_{\underline{i}}: \underline{i} \in I(r,n)\}.$ Then $p_{\underline{i}}$'s are called the Pl\"{u}cker coordinates. Note that $p_{\underline{i}}|_{X(w)} \neq 0$ if and only if $\underline{i} \leq w.$ The monomials $p_{\tau_1}p_{\tau_2}\ldots p_{\tau_m} \in H^0(X(w),\mathcal{L}(\omega_r)^{\otimes m}),$ where $\tau_1, \tau_2, \ldots, \tau_m \in I_{r,n}$ are said to be standard monomial of degree $m$ if $\tau_1 \leq \tau_2 \leq \cdots \leq \tau_m \leq w.$ The standard monomials of degree $m$ on $X(w)$ form a basis of $H^0(X(w),\mathcal{L}(\omega_r)^{\otimes m})$ (see \cite[Theorem 4.5.0.5, p. 43]{LS}). 

Let $v,w\in W^{P^{\alpha_{r}}}$ and let $X^{v}:=\overline{B^{-}vP^{\alpha_{r}}}/P^{\alpha_{r}}$ be the opposite Schubert variety in $G/P^{\alpha_{r}}$ corresponding to $v$. Then the intersection of $X_{w}$ and $X^{v}$ in $G/P^{\alpha_{r}}$ with a reduced variety structure is called a Richardson variety in $G/P^{\alpha_{r}}$ and it is denoted by $X_{w}^{v}.$ Richardson varieties  are  
$T$-stable irreducible varieties. Further, $X^{v}_{w}$ is non empty if and only if $v\le w$ (see \cite[Lemma 1, p.655]{BL}).

Let $\mathfrak{g}$ be the Lie algebra of $G.$ Let $\mathfrak{h} \subset \mathfrak{g}$ be the Lie algebra of $T$ and $\mathfrak{b}$ be the Lie algebra of $B$. Let $X(T)$ (respectively, $Y(T)$) denote the group of all characters (respectively, one-parameter subgroups) of $T.$  Let $E_1:=X(T) \otimes \mathbb{R},$ $E_2:=Y(T) \otimes \mathbb{R}.$ Let $\langle . , . \rangle: E_1 \times E_2 \longrightarrow \mathbb{R}$ be the canonical non-degenerate bilinear form. Let $\bar{C}:=\{\lambda \in E_2 : \langle \alpha, \lambda \rangle \geq 0, \text{ for all } \alpha \in R^{+}\}.$ Note that for each $\alpha \in R,$ there is a homomorphism $\phi_{\alpha}:SL(2,\mathbb{C}) \longrightarrow G$, we have $\check{\alpha}:\mathbb{G}_m \longrightarrow G$ defined by $\check{\alpha}(t)=\phi_{\alpha}\bigg(\begin{pmatrix}
t & 0\\
0 & t^{-1}
\end{pmatrix}\bigg).$ We also have $s_{\alpha}(\chi)=\chi - \langle \chi, \check{\alpha} \rangle \alpha$ for all $\alpha \in R$ and $\chi \in E_1.$ Let $\{\omega_i: i= 1,2, \ldots, n-1\} \subset E_1$ be the fundamental weights; i.e. $\langle \omega_i, \check{\alpha_j} \rangle=\delta_{ij}$ for all $i,j=1, 2, \ldots, n-1.$ There is a natural partial order $\leq$ on $X(T)$ defined by $\psi\le \chi $ if and only if $\chi -\psi$ is a non-negative integral linear combination of simple roots.

Let $\mathcal{L}$ be a $T$-linealized ample line bundle on $G/P.$ We also denote the restriction of the line bundle $\mathcal{L}$ on $X(w)$ by $\mathcal{L}.$ A point $p \in X(w)$ is said to be a semistable with respect to the $T$-linearized line bundle $\mathcal{L}$ if there is a $T$-invariant section $s \in H^0(X(w),\mathcal{L}^{\otimes m})$ for some positive integer $m$ such that $s(p)\neq 0.$ We denote the set of all semistable points of $X(w)$ with respect to $\mathcal{L}$ by $X(w)^{ss}_T(\mathcal{L}).$ A point $p$ in $X(w)^{ss}_{T}(\mathcal{L})$ is said to be a stable point if $T$-orbit of $p$ is closed in $X(w)^{ss}_{T}(\mathcal{L})$ and stabilizer of $p$ in $T$ is finite. We denote the set of all stable points of $X(w)$ with respect to $\mathcal{L}$ by $X(w)^{s}_T(\mathcal{L}).$ 

Now we recall the definition of projective normality of a projective variety.
Let $X$ be a projective variety in $\mathbb{P}^{m}.$ We denote by $\hat{X}$ the affine cone of $X.$ $X$ is said to be projectively normal if $\hat{X}$ is normal. For a reference, see exercise 3.18, page. 23 of \cite{R}. For the practicle purpose we need the following fact about projective normality of a polarized variety.

A polarized variety $(X,\mathcal{L})$ where $\mathcal{L}$ is a very-ample line bundle is said to be projectively normal if its homogeneous coordinate ring $\bigoplus\limits_{k\ge 0} H^{0}(X,\mathcal{L}^{\otimes k})$ is integrally closed and is generated as a $\mathbb{C}$-algebra by $H^0(X,\mathcal{L})$ (see exercise 5.14, chapter II of \cite{R}).

\section{GIT quotient of the minimal dimensional Schubert variety admitting stable points}\label{section3}
In this section, we prove a crucial lemma on the existence of stable points of a Schubert variety $X(w)$ in $G/P.$
\begin{lemma}\label{lemma3.1}
Let $G$ be a simple algebraic group of rank $n.$ Let $\chi$ be a non-trivial dominant character of $T$. Let $\chi=\sum_{i=1}^nm_i\omega_i$. Let $I=\{\alpha_i \in S: m_i \geq 1\}$. Let $P=P^{I}$. Let $a \in \mathbb{N}$ be such that $a\chi$ is in root lattice. Let $w \in W^{P}.$ Then $w(a\chi)=\sum_{i=1}^na_i\alpha_i$ with $a_i \leq -1$ for all $1 \leq i \leq n$ if and only if $X(w)^s_T(\mathcal{L}(\chi)) \neq \emptyset.$  
\end{lemma}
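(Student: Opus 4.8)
The plan is to relate stability on $X(w)$ to the Hilbert--Mumford numerical criterion applied to the $T$-fixed points of $X(w)$, and then translate the resulting condition into the combinatorial statement about the coefficients $a_i$. Recall that $X(w)$ is a $T$-stable projective variety, so by the Hilbert--Mumford criterion together with the fact that every $T$-orbit closure in $X(w)$ contains a $T$-fixed point, a point $p \in X(w)$ is semistable (resp. stable) with respect to $\mathcal{L}(\chi)$ precisely when the relevant weight inequalities hold for all one-parameter subgroups; moreover the set of $T$-fixed points of $X(w)$ is $\{vP : v \in W^P, v \le w\}$, and the $T$-weight of the fibre of $\mathcal{L}(\chi)$ at $vP$ is $-v(\chi)$ (with the standard sign convention; one must fix the convention used in the paper and be consistent). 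The key structural input is that $X(w)^s_T(\mathcal{L}(\chi)) \ne \emptyset$ if and only if the weight $0$ lies in the \emph{interior} of the convex hull of $\{v(a\chi) : v \le w,\ v \in W^P\}$ in $E_1$ — semistability corresponds to $0$ being in the convex hull, stability to $0$ being in its interior (equivalently, not on any proper face). This is the standard picture for torus actions on projective varieties and can be cited from the GIT references listed in Section~\ref{section2}.

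Granting that, the heart of the argument is the following: I claim that $0$ lies in the interior of $\mathrm{conv}\{v(a\chi) : v \le w\}$ if and only if $w(a\chi) = \sum_i a_i \alpha_i$ has all $a_i \le -1$. For the forward direction, suppose $X(w)^s_T \ne \emptyset$. Since $w$ is the maximal element among the relevant $v$, the vertex $w(a\chi)$ is an extreme point of the polytope; because $0$ is interior, writing $0$ as a convex combination forces the "opposite" contributions, and in particular the expansion of $w(a\chi)$ in the basis of simple roots cannot have any nonnegative coefficient — if some $a_j \ge 0$, then the linear functional dual to $\omega_j$ (i.e. $\langle \omega_j, \cdot \rangle$ paired appropriately, or rather the fundamental coweight) would be $\le$ its value on some other vertex in a way that puts $0$ on a face. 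Here the cleanest route is to use that, for $v \in W^P$ with the chosen $P = P^I$, the function $v \mapsto \langle \omega_j, \check{} \,\rangle$-type pairings are monotone along the Bruhat order, so the extremal values of each coordinate functional on the polytope are attained at $w$ and at the minimal elements; one then checks that $0$ interior forces each coordinate of $w(a\chi)$ to be strictly negative. For the converse, assume all $a_i \le -1$; one shows $0$ is interior by exhibiting, for every nonzero linear functional $f$ on $E_1$, a vertex $v(a\chi)$ on which $f$ is strictly positive — using $w(a\chi)$ itself (which is strictly in the negative cone spanned by simple roots) when $f$ is negative on the positive Weyl chamber, and using suitable $W$-translates otherwise, invoking that $a\chi$ is a nonzero dominant character so its $W$-orbit spans.

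I expect the main obstacle to be the converse direction, specifically showing that $w$ being such that $w(a\chi)$ is a strictly negative combination of simple roots is enough to guarantee $0$ is in the \emph{interior} (not just the boundary) of the polytope $\mathrm{conv}\{v(a\chi): v\in W^P, v\le w\}$ — one must rule out $0$ lying on a proper face, which requires understanding which $v \le w$ actually contribute and using that the lower interval $[e,w]$ in $W^P$ is rich enough. A careful treatment will use the description of $W^P$ as $I(r,n)$-type tuples (in the Grassmannian case) or, in general, the fact that if $w(a\chi) \prec 0$ strictly then $e \le w$ and enough reflections $s_i$ with $s_i \le w$ are available so that the vertices $v(a\chi)$ "surround" the origin. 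Secondary care is needed in (i) pinning down the sign convention linking the fibre weight of $\mathcal{L}(\chi)$ to $\pm v(\chi)$, so that the inequality comes out as "$a_i \le -1$" and not "$a_i \ge 1$", and (ii) handling the normalization constant $a$: since $a\chi$ lies in the root lattice, $w(a\chi)$ genuinely is an integral combination $\sum a_i\alpha_i$, and scaling by $a>0$ does not affect semistability or stability, so passing between $\chi$ and $a\chi$ is harmless.
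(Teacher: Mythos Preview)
Your moment-polytope framework is conceptually reasonable, but the paper takes a different and more direct route, and your proposal has a genuine gap in the hard direction.

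\textbf{The gap.} For the implication ``all $a_i\le -1$ $\Rightarrow$ $X(w)^s_T\neq\emptyset$'' you want to show that $0$ lies in the interior of $\mathrm{conv}\{v(a\chi):v\in W^P,\ v\le w\}$. Your plan is to produce, for every nonzero functional $f$, a vertex $v(a\chi)$ with $f(v(a\chi))>0$, ``using suitable $W$-translates otherwise.'' But the vertices are constrained to $v\le w$; you cannot use arbitrary $W$-translates of $a\chi$. Showing that the Bruhat interval $[e,w]$ in $W^P$ is ``rich enough'' to surround the origin is exactly the content of the lemma, and your sketch does not supply an argument. For a functional $f$ that is neither dominant nor antidominant, neither $v=e$ nor $v=w$ works, and you give no mechanism for producing the required $v\le w$.

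\textbf{What the paper does instead.} The paper avoids the polytope entirely. It picks a point $x\in X(w)$ lying outside every $W$-translate of every Schubert variety $X(v)$ with $v\not\ge w$ (such $x$ exist by irreducibility of $X(w)$). The key property of this $x$ is that for \emph{every} $\tau\in W$, the translate $\tau x$ lies in a cell $BvP/P$ with $v\ge w$. Given an arbitrary nontrivial one-parameter subgroup $\lambda$, choose $\tau\in W$ with $\tau(\lambda)=\sum c_i\lambda_i$ dominant; then by $W$-equivariance and Seshadri's formula,
\[
\mu^{\mathcal{L}(a\chi)}(x,\lambda)=\mu^{\mathcal{L}(a\chi)}(\tau x,\tau\lambda)=-\langle v(a\chi),\tau\lambda\rangle
\]
for the $v\ge w$ determined by $\tau x$. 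The Bruhat monotonicity $v\ge w\Rightarrow v(a\chi)\le w(a\chi)$ (which you do cite) together with dominance of $\tau\lambda$ gives $-\langle v(a\chi),\tau\lambda\rangle\ge -\langle w(a\chi),\tau\lambda\rangle=-\sum a_ic_i\ge 1$. Hilbert--Mumford then yields $x$ stable. The converse is handled by intersecting the (open) stable locus with the big cell $BwP/P$ and testing against the fundamental coweights $\lambda_i$ directly.

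\textbf{Comparison.} The paper's trick---choosing $x$ so that \emph{every} $W$-translate lands in a cell indexed by some $v\ge w$---is what lets one reduce an arbitrary $\lambda$ to a dominant one without losing control of the Bruhat constraint. Your polytope formulation would need an equivalent statement phrased purely in terms of the weights $\{v(a\chi):v\le w\}$, and that reformulation is not easier; in effect it would require re-deriving the same geometric input. If you want to salvage your approach, the missing lemma is: for every nonzero $\lambda$ there exists $v\le w$ in $W^P$ with $-\langle v(a\chi),\lambda\rangle\ge 1$, and the cleanest proof of that is precisely the paper's argument.
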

\begin{proof}
Let $Z$ be the union of the $W$-translates of $X(v)$, $v \in W^P$ with $v \ngeq w$. Since $X(w)$ is irreducible, by using the argument of \cite[Lemma 2.1, p.~470]{KP} we have $X(w) \nsubseteq Z$. Let $x \in X(w) \backslash Z$. By construction for any $\tau \in W$, $\tau x \in BvP/P$ implies $v \geq w$.

Now, let $\lambda$ be a nontrivial one parameter subgroup of $T$. Choose $\tau \in W$ such that $\tau(\lambda)=\sum_{i=1}^nc_i\lambda_i$ with each $c_i \geq 0$ and there is an $1 \leq i_0 \leq n$ such that $c_{i_0} \geq 1$. 

By the above discussion, $\tau x \in BvP/P$ for some $v \in W^P$ such that $v \geq w$.

Hence, by\cite[Lemma 5.1, p.534]{Ses}\begin{multline} \mu^{\mathcal{L}(a\chi)}(x, \lambda) = \mu^{\mathcal{L}(a\chi)}(\tau x, \tau \lambda)
=-\langle v(a \chi), \sum_{i=1}^n c_i \lambda_i \rangle \geq -\langle w(a \chi), \sum_{i=1}^n c_i \lambda_i \rangle = -\langle \sum_{i=1}^na_i\alpha_i, \sum_{i=1}^n c_i \lambda_i \rangle \\= -\sum_{i=1}^na_ic_i \geq 1.\end{multline}

Therefore, by Hilbert-Mumford criterion, we have $x \in X(w)^s_T(\mathcal{L}(\chi))$.

Conversely, suppose $X(w)^{s}_{T}(\mathcal{L}(a\chi))\neq \emptyset.$

Then $X(w)^{s}_{T}(\mathcal{L}(a\chi))\cap BwP/P\neq \emptyset.$ Let $x\in X(w)^{s}_{T}(\mathcal{L}(a\chi))\cap BwP/P.$ Let $\lambda$ be a non-trivial one parameter subgroup of $T.$ Since $x$ is stable point, by using Hilbert-Mumford criterion we have $\mu^{\mathcal{L}(a\chi)}(x, \lambda)>0.$ In particular, $\mu^{\mathcal{L}(a\chi)}(x, \lambda_{i})>0$ for all $1\le i\le n.$  Since $x\in BwP/P$ and $\lambda_{i}$'s are fundamental one parameter subgroups of $T,$ by \cite[Lemma 5.1, p.534]{Ses} we have $\mu^{\mathcal{L}(a\chi)}(x, \lambda_{i})=-\langle w(a\chi),\lambda_{i} \rangle>0.$ That is $-\langle \sum\limits_{j=1}^{n}a_{j}\alpha_{j}, \lambda_{i}\rangle>0$ for all $1\le i\le n.$ Therefore, $a_{i}\le-1$ for all $1\le i\le n.$  
\end{proof}
 
We use Lemma \ref{lemma3.1} to find out the minimal dimensional Schubert variety admitting stable points in $G_{2,n}$ for $n$ even, and study its GIT quotient. To proceed further, we recall the following theorem as it is in \cite{HMSV1}. The authors consider the space of weighted points on $\mathbb{P}^n$. Let the $i$-th point be weighted by $a_i$, and let $\underline{a}=(a_1, \ldots, a_n) \in (\mathbb{Z}^{+})^n$ (the weight vector). The weights can be interpreted as parameterizing the very ample line bundles of $(\mathbb{P}^1)^n$. Let $R_{\underline{a}}$ be the homogeneous coordinate ring of $(\mathbb{P}^1)^n//PSL(2,\mathbb{C})$ given by the descent of the very-ample line bundle corresponding to $\underline{a}.$

\begin{theorem}[\cite{HMSV1},Theorem 2.3, p.~182]\label{theo3.2}
The lowest degree invariants generate the coordinate ring $R_{\underline{a}}.$
\end{theorem}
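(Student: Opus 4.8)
The plan is to realize $R_{\underline a}$ as a ring of classical bracket invariants, to reduce the statement to a single ``splitting'' step in each degree, and to prove that step by a straightening argument modulo the Pl\"ucker relations. Recall that, by Borel--Weil, $H^0\big((\mathbb{P}^1)^n,\mathcal{L}(d\underline a)\big)\cong V_{da_1}\otimes\cdots\otimes V_{da_n}$, where $V_k$ is the irreducible $SL(2,\mathbb{C})$-module of highest weight $k$, so the degree-$d$ component $(R_{\underline a})_d$ is the invariant space $\big(V_{da_1}\otimes\cdots\otimes V_{da_n}\big)^{SL(2,\mathbb{C})}$. The first step is the first fundamental theorem of invariant theory for $SL(2,\mathbb{C})$: this space is spanned by the bracket monomials $\prod_{\{i,j\}}[ij]^{m_{ij}}$ with $[ij]=x_iy_j-x_jy_i$, where $(m_{ij})$ runs over the symmetric non-negative integer matrices with zero diagonal and row sums $\sum_j m_{ij}=da_i$; equivalently, such a monomial is encoded by a multigraph on $\{1,\dots,n\}$ with degree sequence $d\underline a$. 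Via $[ij]\leftrightarrow p_{ij}$ this identifies $\bigoplus_d(R_{\underline a})_d$ with the subalgebra $\bigoplus_d A_{d\underline a}$ of the $\mathbb{Z}^n$-graded homogeneous coordinate ring $A=\bigoplus_{\underline b}A_{\underline b}$ of $G_{2,n}\subseteq\mathbb{P}(\wedge^2\mathbb{C}^n)$, the relevant relations being the three-term Pl\"ucker relations $[ij][kl]-[ik][jl]+[il][jk]=0$, which brings us back to the standard monomial setting of \cref{section2}.

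I would next dispose of the trivial cases and pin down the lowest degree. If $2a_{i_0}>\sum_j a_j$ for some $i_0$, then no multigraph has degree sequence $d\underline a$ for any $d\ge 1$, so $R_{\underline a}$ is trivial in positive degrees and there is nothing to prove; hence assume $2a_i\le\sum_j a_j$ for all $i$. A multigraph with degree sequence $d\underline a$ has $\tfrac d2\sum_i a_i$ edges, so $(R_{\underline a})_d\ne 0$ precisely when $d\big(\sum_i a_i\big)$ is even, and the smallest positive such $d$ is $d_0=1$ when $\sum_i a_i$ is even and $d_0=2$ when $\sum_i a_i$ is odd (in the odd case only even degrees occur). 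It therefore suffices to show that for every admissible $d$ the multiplication map $(R_{\underline a})_{d_0}\otimes(R_{\underline a})_{d-d_0}\to(R_{\underline a})_d$ is surjective, whereupon an induction on $d$ finishes the proof.

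The core of the argument is this surjectivity, which I would establish as a straightening statement: every bracket monomial of degree $d>d_0$ is congruent, modulo the Pl\"ucker relations, to a $\mathbb{C}$-linear combination of products $[\Gamma_0]\cdot[\Gamma']$ with $\Gamma_0$ of degree sequence $d_0\underline a$ and $\Gamma'$ of degree sequence $(d-d_0)\underline a$. One cannot merely hope that the multigraph $\Gamma$ attached to the monomial already admits an edge-decomposition of this shape: for $\underline a=(1,\dots,1)$ with $n=6$ and $\Gamma$ a disjoint union of two triangles there is no spanning subgraph in which every degree equals $1$, so the Pl\"ucker relations must be used in an essential way. The plan is to attach to each bracket monomial a numerical measure of its failure to be ``$d_0$-splittable'' (for instance, built from the lexicographically least pair of brackets violating the standard-monomial condition on the appropriate Schubert cell of $G_{2,n}$), to apply the three-term relation at that spot so as to rewrite the monomial as two monomials, each strictly smaller for this measure and each still having a complete degree-$d_0$ sub-monomial factored off, and to invoke standard monomial theory for $G_{2,n}$ to guarantee termination and to keep the bookkeeping finite.

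I expect the main obstacle to be exactly this last step: one has to choose the measure and the order of the Pl\"ucker moves so that each move strictly decreases the measure \emph{and} the piece that gets split off really does correspond to a multigraph with degree sequence $d_0\underline a$, i.e.\ to a lowest-degree invariant. Controlling the latter is where the hypothesis $2a_i\le\sum_j a_j$ re-enters, since it is precisely the condition ensuring that multigraphs of degree sequence $d_0\underline a$ exist in the abundance needed to absorb an arbitrary configuration. (Alternatively, one could transport the statement to $G_{2,n}$ via the Gel'fand--MacPherson correspondence and deduce it from the projective normality results of \cite{HMSV1,BKS,NP}, but the straightening argument above is the route I would take here.)
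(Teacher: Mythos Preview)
The paper does not give its own proof of this theorem: it is quoted verbatim from \cite{HMSV1} and used as a black box to deduce \cref{cor3.3} and \cref{cor3.4}. So there is no ``paper's proof'' to compare against; the relevant comparison is with the argument in \cite{HMSV1} itself, which proceeds via a toric degeneration of $R_{\underline a}$ to a semigroup algebra and a combinatorial analysis of that semigroup (building on Kempe's classical result for $\underline a=(1,\dots,1)$).

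Your proposal correctly sets up the problem---the identification with bracket invariants, the reduction to surjectivity of $(R_{\underline a})_{d_0}\otimes(R_{\underline a})_{d-d_0}\to(R_{\underline a})_d$, and the observation that naive graph-splitting fails (your two-triangle example is exactly the right cautionary case). But the argument stops at the decisive point. You announce that you will define a measure of ``failure to be $d_0$-splittable'' and apply three-term Pl\"ucker moves to decrease it, yet you neither specify the measure nor verify the two properties you yourself flag as the obstacle: that each move strictly decreases it, and that the factor peeled off genuinely has multidegree $d_0\underline a$. This is not a detail to be filled in later; it is the entire content of the theorem. Indeed, a naive lexicographic measure on bracket monomials does not interact well with the requirement that a full degree-$d_0$ factor be split off, and the known proofs (Kempe's original for the unweighted case, and the degeneration argument of \cite{HMSV1} in general) work precisely because they replace this ad hoc straightening by a structured combinatorial object (perfect matchings, or lattice points in a polytope) on which the induction can be carried out cleanly.

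Finally, your parenthetical alternative is partly circular: the projective normality in \cite{HMSV1} is a consequence of the theorem you are trying to prove, so invoking it would beg the question. The independent arguments in \cite{BKS} and \cite{NP} do give a legitimate route, but only for the specific weights $\underline a=(1,\dots,1)$ (equivalently $\mathcal L(\tfrac n2\omega_2)$ or $\mathcal L(n\omega_2)$), not for arbitrary $\underline a$ as stated.
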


\begin{corollary}\label{cor3.3}
  The GIT quotient $T \backslash\backslash (G_{2,n})^{ss}_T(\mathcal{L}({\frac{n}{2}\omega_2}))$ is projectively normal with respect to the descent of the line bundle $\mathcal{L}({\frac{n}{2}\omega_2})$.
\end{corollary}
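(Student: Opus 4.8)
The plan is to deduce \cref{cor3.3} directly from \cref{theo3.2} via the Gel'fand–Macpherson correspondence. First I would recall that, by Gel'fand–Macpherson (cited in the introduction), the GIT quotient $T\backslash\backslash (G_{2,n})^{ss}_T(\mathcal{L}(\tfrac{n}{2}\omega_2))$ is isomorphic, as a polarized variety, to the GIT quotient $(\mathbb{P}^1)^n /\!/ PSL(2,\mathbb{C})$ taken with respect to the symmetric weight vector $\underline{a}=(\tfrac{n}{2},\dots,\tfrac{n}{2})$ — or, after rescaling the linearization, the weight vector $\underline{a}=(1,1,\dots,1)$, which is the democratic choice that makes semistability equal to stability only when $n$ is odd but is still a legitimate weight vector when $n$ is even. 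Under this identification, the descent of $\mathcal{L}(\tfrac{n}{2}\omega_2)$ to the left-hand quotient corresponds to the descent of the very ample line bundle on $(\mathbb{P}^1)^n$ attached to $\underline{a}$, and the homogeneous coordinate ring of the quotient is exactly $R_{\underline{a}}$ in the notation of \cref{theo3.2}.

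The second step is to invoke \cref{theo3.2}: the ring $R_{\underline{a}}$ is generated by its lowest-degree invariants. For the symmetric weight vector, the lowest-degree invariants are precisely the degree-one piece $H^0\big(T\backslash\backslash(G_{2,n})^{ss}_T(\mathcal{L}(\tfrac{n}{2}\omega_2)),\,\widetilde{\mathcal{L}(\tfrac{n}{2}\omega_2)}\big)$ of the homogeneous coordinate ring of the quotient — here I would point out that since the line bundle $\mathcal{L}(\tfrac{n}{2}\omega_2)$ already admits $T$-invariant sections (this is why $n$ even is assumed, and $\tfrac{n}{2}\omega_2$ lies in the root lattice up to the appropriate scaling), the degree-one part is nonzero and coincides with the lowest-degree invariants. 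Hence the coordinate ring of the quotient is generated in degree one. This establishes the "generated as a $\mathbb{C}$-algebra by $H^0(X,\mathcal{L})$" half of the projective normality criterion stated at the end of \cref{section2}.

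The third step is to check that the homogeneous coordinate ring is integrally closed (normal). The cleanest route is: a GIT quotient of a normal projective variety by a reductive group is again normal, and the affine cone over a normal projective variety that is projectively normal-in-the-sense-of-generation need only additionally be normal as a ring; but in fact the standard fact is that if $X\subseteq \mathbb{P}^m$ is a normal projective variety whose homogeneous coordinate ring is generated in degree one, then one still must verify integral closure — however, here one can instead argue that the section ring $\bigoplus_k H^0$ is automatically integrally closed because each $H^0(X,\mathcal{L}^{\otimes k})$ equals the degree-$k$ part of the integral closure, using that $X$ is normal and $\mathcal{L}$ is very ample. Alternatively, and more in the spirit of the cited literature, once we know the ring is generated in degree one and $X$ is projectively normal for the Grassmannian-level polarization, projective normality of the quotient follows because taking $T$-invariants is exact on the relevant graded pieces and preserves the surjections $\mathrm{Sym}^k H^0(X,\mathcal{L}) \twoheadrightarrow H^0(X,\mathcal{L}^{\otimes k})$. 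I would phrase the argument so that \cref{theo3.2} supplies generation in degree one and normality is inherited from the normality of $(\mathbb{P}^1)^n$ together with the fact that GIT quotients of normal varieties are normal.

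The main obstacle I anticipate is the bookkeeping around which very ample line bundle on $(\mathbb{P}^1)^n$ corresponds to $\mathcal{L}(\tfrac{n}{2}\omega_2)$ under Gel'fand–Macpherson, and making sure the "lowest-degree invariants" of \cref{theo3.2} genuinely land in the degree-one graded piece of the descended line bundle rather than some positive multiple — this is exactly the subtlety that forces $n$ to be even (so that $\tfrac{n}{2}\omega_2$, suitably interpreted, is the smallest linearization admitting invariants). If the correspondence instead matched a multiple of the natural polarization, one would only get generation in that degree and would need an extra argument; but for the symmetric weight vector this does not happen, and the identification of the lowest-degree invariants with $H^0(X,\mathcal{L})$ is immediate from the nonvanishing of $H^0(G_{2,n},\mathcal{L}(\tfrac{n}{2}\omega_2))^T$. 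Beyond that, the proof is a short citation-driven argument.
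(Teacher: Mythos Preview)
Your proposal is correct and follows essentially the same route as the paper: identify $(T\backslash\backslash G_{2,n},\mathcal{L}(\tfrac{n}{2}\omega_2))$ with $((\mathbb{P}^1)^n/\!/PSL(2,\mathbb{C}),\mathcal{L}(1,\ldots,1))$ via Gel'fand--MacPherson and then invoke \cref{theo3.2} for generation in degree one. Your explicit treatment of integral closure is in fact more careful than the paper's own proof, which simply says ``by \cref{theo3.2}, corollary follows'' and leaves normality of the invariant ring implicit.
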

\begin{proof}
	By Gel'fand-MacPherson correspondence there is an isomorphism of 
	$((\mathbb{P}^1)^n//PSL(2,\mathbb{C}),\\ \mathcal{L}(1,1,\ldots,1))$ with $(T \backslash\backslash G_{2,n}, \mathcal{L}(\frac{n}{2}\omega_2))$ as a polarized variety. Therefore, by \cref{theo3.2}, corollary follows.
\end{proof}

\begin{corollary}\label{cor3.4}
 The GIT quotients of  Richardson varieties in $G_{2,n}$ by $T$ are projectively normal with respect to the descent of the line bundles $\mathcal L(\frac{n}{2}\omega_2)$.
 \end{corollary}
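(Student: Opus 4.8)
The plan is to describe the homogeneous coordinate ring of the GIT quotient of a Richardson variety $X_w^v \subseteq G_{2,n}$ in two complementary ways: as a quotient of the homogeneous coordinate ring of $T\backslash\backslash G_{2,n}$, which is already known to be projectively normal, and as a Veronese subring of the ring of $T$-invariants in the Pl\"ucker coordinate ring of $X_w^v$. The first description will yield generation in degree one, and the second will yield integral closure.

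Write $M := \mathcal{L}(\tfrac{n}{2}\omega_2)$, and let $X_w^v$ be a Richardson variety in $G_{2,n}$, so that $X_w^v$ is reduced and irreducible. First I would invoke standard monomial theory (as in \cite{LS}; see also \cite{BL}) to the effect that $X_w^v$ is projectively normal in the Pl\"ucker embedding: the section ring $R := \bigoplus_{m \ge 0} H^0\big(X_w^v, \mathcal{L}(\omega_2)^{\otimes m}\big)$ is a normal domain generated in degree one by the Pl\"ucker coordinates $p_{\underline i}$ with $v \le \underline i \le w$. Since $T$ is linearly reductive, $R^T$ is again a normal domain: an element of $\mathrm{Frac}(R^T)$ integral over $R^T$ is integral over $R$, hence lies in $R$, and being a ratio of $T$-invariants it is $T$-fixed, hence lies in $R^T$. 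The homogeneous coordinate ring of the GIT quotient of $X_w^v$ with respect to the descent of $M$ is
\[
\bigoplus_{k \ge 0} H^0\big(X_w^v, M^{\otimes k}\big)^T \;=\; \bigoplus_{k \ge 0} H^0\big(X_w^v, \mathcal{L}(\omega_2)^{\otimes \frac{n}{2}k}\big)^T \;=\; \big(R^T\big)^{(n/2)},
\]
the $\tfrac{n}{2}$-th Veronese subring of $R^T$. A Veronese subring of a normal graded domain is again normal: decompose an element of its fraction field into homogeneous components; integrality over the Veronese puts the element in $R^T$, and its homogeneous components then sit in degrees divisible by $\tfrac{n}{2}$, hence in $\big(R^T\big)^{(n/2)}$. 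Thus $\big(R^T\big)^{(n/2)}$ is integrally closed.

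It remains to show that $\big(R^T\big)^{(n/2)}$ is generated in degree one. Here I would use that the restriction map $H^0\big(G_{2,n}, \mathcal{L}(\omega_2)^{\otimes m}\big) \to H^0\big(X_w^v, \mathcal{L}(\omega_2)^{\otimes m}\big)$ is surjective for every $m \ge 0$ (standard monomial theory again: every standard monomial on $X_w^v$ is the restriction of a standard monomial on $G_{2,n}$), so by linear reductivity of $T$ the induced map on $T$-invariants is also surjective. Restricting to the degrees divisible by $\tfrac{n}{2}$ gives a surjection of graded rings $\bigoplus_{k \ge 0} H^0\big(G_{2,n}, M^{\otimes k}\big)^T \twoheadrightarrow \big(R^T\big)^{(n/2)}$. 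By \cref{cor3.3} the source is generated in degree one, hence so is the target. Combining the last two paragraphs, $\big(R^T\big)^{(n/2)}$ is a normal graded domain generated in degree one, which is exactly the assertion that the GIT quotient of $X_w^v$ is projectively normal with respect to the descent of $M$.

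I expect the one point requiring care to be the two standard-monomial inputs used above: projective normality of Richardson varieties in the Pl\"ucker embedding, and surjectivity of the Pl\"ucker restriction maps onto them. Both belong to the standard monomial theory of Richardson varieties, but since the excerpt records them only for Schubert varieties $X(w)$, one should either cite the Richardson versions directly or derive them from the Schubert case, for instance via the Frobenius splitting of $G/P$ compatible with all Richardson varieties (passing to characteristic zero by the usual semicontinuity argument). Everything else is \cref{cor3.3} together with routine facts about invariants and Veronese subrings of normal domains.
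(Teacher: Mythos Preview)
Your proof is correct and follows the same route as the paper for the key step: surjectivity of the restriction $H^0(G_{2,n},M^{\otimes k})\to H^0(X_w^v,M^{\otimes k})$ (which the paper cites directly from \cite[Lemma~5, Proposition~1]{BL}), passage to $T$-invariants by linear reductivity, and then \cref{cor3.3} to conclude generation in degree one. The paper's own proof stops there and does not spell out the integral-closure half of projective normality; your additional argument (normality of $R$ by standard monomial theory for Richardson varieties, then of $R^T$ by linear reductivity, then of the Veronese $(R^T)^{(n/2)}$) fills that gap and is the natural way to do so. The caveat you raise about needing the Richardson-variety versions of the SMT inputs is on point: the paper handles it by citing \cite{BL} rather than deriving it from the Schubert case.
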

 \begin{proof} 
Let  $v, w \in W^{P^{\alpha_2}}$ be such that $v\le w.$ By \cite[Lemma 5, Proposition 1, p. 658]{BL} the restriction map $\phi: H^0(G_{2,n}, \mathcal{L}(\frac{n}{2}\omega_2)^{\otimes d}) \to H^0(X^v_w, \mathcal{L}(\frac{n}{2}\omega_2)^{\otimes d}))$ is surjective. Further, since $T$ is linearly reductive, the restriction map $\phi: H^0(G_{2,n}, \mathcal{L}(\frac{n}{2}\omega_2)^{\otimes d})^T \to H^0(X^v_w, \mathcal{L}(\frac{n}{2}\omega_2^{\otimes d})^T$ is surjective. So, by \cref{cor3.3}, $\bigoplus\limits_{d \geq 0}H^0(X^v_w, \mathcal{L}(\frac{n}{2}\omega_2)^{\otimes d})^T$ is generated by $H^0(X^v_w, \mathcal{L}(\frac{n}{2}\omega_2))^T$. Hence, $T \backslash \backslash(X^v_w)^{ss}_T(\mathcal{L}(\frac{n}{2}\omega_2))$ is projectively normal.
 \end{proof}

\subsection{Minimal dimensional Schubert variety admitting stable points} 
Let $w^{ss}_{\text{min}}\\=(s_{\frac{n}{2}-1}\ldots s_1)(s_{n-1}s_{n-2}\ldots s_2)=(\frac{n}{2},n).$ Then $w^{ss}_{\text{min}}$ is a unique minimal element in $W^{S\setminus\{\alpha_{2}\}}$ such that $w^{ss}_{\text{min}}(\frac{n}{2}\omega_{2})\le 0.$ Therefore, by \cite[Lemma 2.1, p.~470]{KP}, $X(w^{ss}_{\text{min}})$ is the minimal dimensional Schubert variety in $G_{2,n}$ admitting semistable points.
 Note that $\prod_{k=1}^{\frac{n}{2}}p_{k,\frac{n}{2}+k}$ is the unique standard monomial in $H^0(X(w^{ss}_{\text{min}}),\mathcal{L}(\frac{n}{2}\omega_2))^T$. Therefore, by \cref{cor3.4}, $T \backslash\backslash (X(w^{ss}_{\text{min}})^{ss}_T(\mathcal{L}(\frac{n}{2}\omega_2))$ is a point.

Let $w^s_{\text{min}}=(s_{\frac{n}{2}}s_{\frac{n}{2}-1}\ldots s_1)(s_{n-1}s_{n-2}\ldots s_2)=(\frac{n}{2}+1,n)$. Note that $l(w^s_{\text{min}})=\frac{n}{2}-2+n.$ Then $w^{s}_{\text{min}}$ is a unique minimal element in $W^{S\setminus\{\alpha_{2}\}}$ such that $w^{s}_{\text{min}}(\frac{n}{2}\omega_{2})< 0.$ Therefore, by \cref{lemma3.1}, $X(w^{s}_{\text{min}})$ is the minimal dimensional Schubert variety in $G_{2,n}$ admitting stable points. 
 Note that the standard monomials in $H^0(X(w^{s}_{\text{min}}),\mathcal{L}_{\frac{n}{2}\omega_2})^T$ are $X_t=p_{1,t}\prod\limits_{k=2}^{t-1}p_{k,\frac{n}{2}+k}\prod\limits_{k=t}^{\frac{n}{2}}p_{k+1,\frac{n}{2}+k}$, for all $2 \leq t \leq \frac{n}{2}+1$.
 
 \begin{proposition}\label{lemma3.5}
 The GIT quotient $T \backslash\backslash (X(w^s_{\text{min}}))^{ss}_T(\mathcal{L}(\frac{n}{2}\omega_2))$ is isomorphic to $\mathbb{P}^{\frac{n}{2}-1}.$
 \end{proposition}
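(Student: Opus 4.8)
The idea is to identify the homogeneous coordinate ring of the quotient and show it is a polynomial ring in $\frac{n}{2}$ variables. Write $X:=X(w^s_{\text{min}})$ and $\mathcal L:=\mathcal L(\tfrac n2\omega_2)$, and set $R:=\bigoplus_{d\ge 0}H^0(X,\mathcal L^{\otimes d})^T$, so that by definition $T\backslash\backslash X^{ss}_T(\mathcal L)=\operatorname{Proj}(R)$. By \cref{cor3.4} this quotient is projectively normal with respect to the descent of $\mathcal L$, which means precisely that $R$ is an integrally closed $\mathbb C$-algebra generated by its degree-one part $R_1=H^0(X,\mathcal L)^T$. By the description recalled just before the statement, $R_1$ has as a basis the $\frac n2$ invariant standard monomials $X_2,\dots,X_{\frac n2+1}$ (the $T$-invariant standard monomials span $H^0(X,\mathcal L)^T$ since each standard monomial is a $T$-weight vector). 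Hence there is a surjection of graded $\mathbb C$-algebras $\phi\colon \mathbb C[Y_2,\dots,Y_{\frac n2+1}]\twoheadrightarrow R$, $Y_t\mapsto X_t$, with each $Y_t$ in degree $1$, and it suffices to show $\phi$ is an isomorphism.

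\textbf{Dimension count.} I claim $\dim R=\frac n2$, equivalently $\dim\operatorname{Proj}(R)=\frac n2-1$. The Schubert variety $X$ is irreducible of dimension $l(w^s_{\text{min}})=\frac{3n}{2}-2$. By \cref{lemma3.1} the stable locus $X^s_T(\mathcal L)$ is nonempty; being open and $T$-stable, it is dense in the irreducible $X$, and on it the quotient map $\pi$ is a geometric quotient whose fibres are $T$-orbits of dimension $\dim T=n-1$ (stabilizers of stable points are finite). Therefore $T\backslash\backslash X^s_T(\mathcal L)$ has dimension $l(w^s_{\text{min}})-(n-1)=\frac n2-1$; since it is a nonempty open subset of the irreducible variety $T\backslash\backslash X^{ss}_T(\mathcal L)=\operatorname{Proj}(R)$, the latter also has dimension $\frac n2-1$, so $\dim R=\frac n2$.

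\textbf{Conclusion.} The ring $R$ is an integral domain, being a graded subring of the section ring $\bigoplus_{d\ge 0}H^0(X,\mathcal L^{\otimes d})$ of the integral projective variety $X$. Hence $\ker\phi$ is a prime ideal of $\mathbb C[Y_2,\dots,Y_{\frac n2+1}]$, a ring of Krull dimension $\frac n2$; since this polynomial ring is catenary with $\dim\big(\mathbb C[Y]/\mathfrak p\big)+\operatorname{ht}(\mathfrak p)=\frac n2$ for every prime $\mathfrak p$, and $\dim R=\frac n2$, we get $\operatorname{ht}(\ker\phi)=0$, whence $\ker\phi=(0)$. Thus $\phi$ is an isomorphism of graded rings, $R\cong\mathbb C[Y_2,\dots,Y_{\frac n2+1}]$ with the standard grading, and $T\backslash\backslash X^{ss}_T(\mathcal L)=\operatorname{Proj}(R)\cong\mathbb P^{\frac n2-1}$.

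\textbf{Main obstacle.} Essentially everything above is formal once the dimension of the quotient is pinned down, so the one step needing genuine care is the dimension count: verifying that the stable locus is dense (hence that a generic $T$-orbit in $X$ has dimension $\dim T$) and that $\dim T\backslash\backslash X^{ss}_T(\mathcal L)=\dim T\backslash\backslash X^{s}_T(\mathcal L)$. A more computational alternative, which avoids this, is to establish directly via standard monomial theory that $\dim H^0(X,\mathcal L^{\otimes d})^T=\binom{d+\frac n2-1}{\frac n2-1}$ for all $d\ge 0$ and then conclude $\ker\phi=0$ by comparing Hilbert functions; but the geometric argument is shorter.
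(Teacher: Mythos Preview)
Your proof is correct and follows essentially the same approach as the paper's own proof: set up the homogeneous coordinate ring $R$, use \cref{cor3.4} to get generation in degree one by the $\frac n2$ invariants $X_t$, compute $\dim R=\frac n2$ from the existence of stable points, and conclude that the evident surjection from a polynomial ring in $\frac n2$ variables is an isomorphism. The paper's argument is more terse---it jumps directly from the dimension count to ``Therefore, $R$ is the polynomial algebra''---whereas you spell out the prime-kernel/height-zero step explicitly (exactly the argument the paper uses later in Lemma~\ref{lemma4.6}); but the underlying strategy is identical.
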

 
 \begin{proof} 
 Let $R=\bigoplus\limits_{d\ge 0} R_{d},$ where $R_d=H^0(X(w^{s}_{\text{min}}), \mathcal{L}(\frac{dn}{2}\omega_2))^{T}.$ Then $T \backslash\backslash (X(w^s_{\text{min}}))^{ss}_T(\mathcal{L}(\frac{n}{2}\omega_2))=\text{Proj}(R).$ By \cref{cor3.4}, $R$ is generated as a $\mathbb{C}$-algebra by $X_t$'s, where $2 \leq t \leq \frac{n}{2}+1.$ 
 On the other hand, since $X(w^{s}_{\text{min}})$ admits stable points, we have $dim(T \backslash\backslash X(w^s_{\text{min}}))=\frac{n}{2}-1.$ Therefore, $R$ is the polynomial algebra generated by $X_t$'s, where $2 \leq t \leq \frac{n}{2}+1$. Hence, $Proj(R) \cong\mathbb{P}^{\frac{n}{2}-1}$.
    
 \end{proof}
\subsection{Torus quotient of Richardson varieties}
Next we generalize Proposition \ref{lemma3.5} to the Richardson varieties in $X(w_{min}^{s}).$ We prove the following
 \begin{proposition}\label{prop3.6}
Let $v_{k}=s_k s_{k-1} \cdots s_2$ for all $2 \leq k \leq \frac{n}{2}.$ Then the GIT quotient \\ $T\backslash \backslash (X^{v_k}_{w^{s}_{min}})^{ss}_T(\mathcal{L}(n\omega_2))$ is isomorphic to $\mathbb{P}^{\frac{n}{2}-k}.$
\end{proposition}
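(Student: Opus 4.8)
The plan is to run the argument of Proposition~\ref{lemma3.5} relative to the Richardson variety $X:=X^{v_k}_{w^{s}_{\text{min}}}$. Under the identification of $W^{P^{\alpha_2}}$ with $I(2,n)$ we have $v_k=(1,k+1)$ and $w^{s}_{\text{min}}=(\tfrac n2+1,n)$, so $v_k\le w^{s}_{\text{min}}$ (as $k\le\tfrac n2$), $X$ is a nonempty irreducible $T$-stable variety, and $\dim X=l(w^{s}_{\text{min}})-l(v_k)=(\tfrac n2+n-2)-(k-1)=\tfrac n2+n-k-1$. Since $\mathcal{L}(n\omega_2)=\mathcal{L}(\tfrac n2\omega_2)^{\otimes 2}$, the semistable locus and the $\operatorname{Proj}$ of the ring of invariants are unchanged on passing from $n\omega_2$ to $\tfrac n2\omega_2$ (replacing the invariant ring by a Veronese subring), so I would set $\mathcal L:=\mathcal L(\tfrac n2\omega_2)$ and $R:=\bigoplus_{d\ge0}H^0(X,\mathcal L^{\otimes d})^T$, so that the quotient in question is $\operatorname{Proj}(R)$.

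First I would determine $R_1=H^0(X,\mathcal L)^T$. By \cref{cor3.4} --- more precisely by the surjectivity of $H^{0}(X(w^{s}_{\text{min}}),\mathcal L^{\otimes d})^{T}\to H^{0}(X,\mathcal L^{\otimes d})^{T}$ established in its proof --- the algebra $R$ is generated by $R_1$, and $R_1$ is spanned by the restrictions to $X$ of the standard monomials $X_t$, $2\le t\le\tfrac n2+1$. Writing $X_t=p_{1,t}\prod_{j=2}^{t-1}p_{j,\frac n2+j}\prod_{j=t}^{\frac n2}p_{j+1,\frac n2+j}$, the smallest and largest Pl\"ucker factors of $X_t$ in the Bruhat order are $p_{1,t}$ and $p_{\frac n2+1,n}$; since $p_{\underline i}|_{X}\ne0$ precisely when $v_k\le\underline i\le w^{s}_{\text{min}}$ and every factor of $X_t$ lies between $p_{1,t}$ and $p_{\frac n2+1,n}$, the monomial $X_t$ restricts to a nonzero standard monomial on $X$ exactly when $(1,t)\ge(1,k+1)$, i.e.\ $t\ge k+1$. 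As distinct nonzero standard monomials are linearly independent, $\dim_{\mathbb C}R_1=\tfrac n2-k+1$, a basis being $\{X_{k+1}|_X,\dots,X_{\frac n2+1}|_X\}$. Hence $R$ is a quotient of a polynomial ring in $\tfrac n2-k+1$ variables, and $\operatorname{Proj}(R)$ is an irreducible closed subvariety of $\mathbb P^{\frac n2-k}$.

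It remains to prove $\dim\operatorname{Proj}(R)=\tfrac n2-k$; then $\operatorname{Proj}(R)$, being an irreducible closed subvariety of $\mathbb P^{\frac n2-k}$ of full dimension, equals $\mathbb P^{\frac n2-k}$ (equivalently, the kernel of the surjection from the polynomial ring is a prime of height $0$, hence $0$, and $R$ is that polynomial ring). For this I would show that $X$ admits stable points with respect to $\mathcal L$; granting that, a geometric quotient exists over a dense open subset of $X$ with $T$-orbits of dimension $\dim T$ as fibres, so $\dim\operatorname{Proj}(R)=\dim X-\dim T=(\tfrac n2+n-k-1)-(n-1)=\tfrac n2-k$. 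The existence of stable points is the one substantive point, and I expect it to be the main obstacle: I would establish it by the Hilbert--Mumford criterion along the lines of the proof of \cref{lemma3.1}, exhibiting a point $x$ of the open Richardson cell $\big(\overline{B^{-}v_kP^{\alpha_2}}/P^{\alpha_2}\big)\cap\big(Bw^{s}_{\text{min}}P^{\alpha_2}/P^{\alpha_2}\big)$ with $\mu^{\mathcal L}(x,\lambda)>0$ for every nontrivial one-parameter subgroup $\lambda$ of $T$; equivalently, one checks that $\tfrac2n(1,\dots,1)$ lies in the relative interior of $\operatorname{conv}\{e_a+e_b : 1\le a\le\tfrac n2+1,\ k+1\le b\le n,\ a<b\}$, the weight polytope attached to a generic point of that cell, the hypothesis $k\le\tfrac n2$ being exactly what keeps this polytope full-dimensional around that point. (When $k=\tfrac n2$ the statement is degenerate: $R_1$ is one-dimensional and $\operatorname{Proj}(R)=\mathbb P^0$ is a point.) Everything else is a formal consequence of standard monomial theory and \cref{cor3.4}.
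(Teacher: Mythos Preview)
Your approach is correct but takes a genuinely different route from the paper. You argue by dimension: after identifying the basis $\{X_t|_X : k+1\le t\le \tfrac n2+1\}$ of $R_1$ and invoking \cref{cor3.4} for generation in degree one, you reduce the statement to $\dim\operatorname{Proj}(R)=\tfrac n2-k$, which you obtain from the existence of stable points on the Richardson variety via a Hilbert--Mumford/weight-polytope computation. This is valid, and your sketch of the polytope criterion is accurate, but the verification that $\tfrac2n(1,\dots,1)$ lies in the interior for $2\le k\le\tfrac n2$ does need to be carried out (and you correctly flag it as the substantive step).

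The paper bypasses this entirely. Rather than establishing stable points and counting dimensions, it observes directly that every monomial $\prod_{t=k+1}^{\frac n2+1}X_t^{a_t}$ is itself a \emph{standard} monomial on $X^{v_k}_{w^s_{\text{min}}}$ (the Pl\"ucker factors of the various $X_t$ are mutually comparable in the Bruhat order, so any product can be rearranged into a chain). Since standard monomials are linearly independent, the $X_t$ are algebraically independent; combined with generation in degree one this forces $R$ to be the polynomial ring. The paper's argument is shorter and purely combinatorial (no Hilbert--Mumford analysis), whereas your dimension argument is more conceptual and parallels the proof of Proposition~\ref{lemma3.5} more closely, at the cost of the extra stability verification.
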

\begin{proof}
Let $R_{v_{k},w}=\bigoplus\limits_{d\ge 0} R_{d},$ where $R_d=H^0(X^{v_k}_{w^{s}_{\text{min}}}, \mathcal{L}(\frac{dn}{2}\omega_2))^{T}.$ Then $T \backslash\backslash (X^{v_k}_{w^{s}_{\text{min}}})^{ss}_T(\mathcal{L}(\frac{n}{2}\omega_2))=\text{Proj}(R_{v_{k},w}).$ Note that $X_{t}$'s  forms a basis of $R_{1}$ where $k+1\le t\le \frac{n}{2}+1.$ Further, we observe that $\prod\limits_{t=k+1}^{\frac{n}{2}+1} X_{t}^{a_{t}}$'s are standard monomials for all non-negative integers $a_{t}$ where $k+1\le t\le \frac{n}{2}+1.$  By \cref{cor3.4}, $R_{v_{k},w}$ is generated by $X_t$'s, where $k+1 \leq t \leq \frac{n}{2}+1$ as a $\mathbb{C}$-algebra. 
Therefore, $R_{v_{k},w}$ is the polynomial algebra generated by $X_t$'s, where $k+1 \leq t \leq \frac{n}{2}+1$. Hence, $Proj(R_{v_{k},w})$ is isomorphic to $\mathbb{P}^{\frac{n}{2}-k}.$
\end{proof}
Let $w=(s_{\frac{n}{2}+1}\cdots s_{1})(s_{n-1}\cdots s_{2})$ and $v_{k}=s_{k}s_{k-1}\ldots s_2$ for all $2 \leq k \leq \frac{n}{2}-1.$ Then in the one line notation we have $w=(\frac{n}{2}+2,n)$ and $v_k=(1,k+1)$ for all $2 \leq k \leq \frac{n}{2}-1.$

Let $Y_{i,j}=p_{1,i}p_{2,j}(\prod\limits_{l=3}^{i-1}p_{l,\frac{n}{2}+l})(\prod\limits_{l=i+1}^{j-1}p_{l,\frac{n}{2}+l-1})(\prod\limits_{l=j+1}^{\frac{n}{2}+2}p_{l,\frac{n}{2}+l-2})$ for all $k+1 \leq i<j \leq \frac{n}{2}+2.$ 

Let $R_{v_{k},w}=\bigoplus\limits_{d\geq 0}R_d$ where $R_d=H^{0}(X^{v_k}_{w}, \mathcal{L}(\frac{dn}{2}\omega_2))^T$ for all $2\le k\le \frac{n}{2}-1.$ Then we have

\begin{lemma}\label{lemma3.7}
The standard monomials of $R_1$ are in the form of $Y_{i,j}$ where $k+1 \leq i<j \leq \frac{n}{2}+2.$
\end{lemma}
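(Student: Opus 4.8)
The plan is to identify, inside the ambient space $X(w)$ with $w=(\frac{n}{2}+2,n)$ intersected with the opposite Schubert variety $X^{v_k}$, exactly which standard monomials $p_{\tau}$ of degree $1$ (equivalently, which products of Plücker coordinates $p_{a,b}$ that are weight-zero under $T$) survive. Recall $H^0(X^{v_k}_w,\mathcal L(\frac n2\omega_2))$ has a basis of standard monomials $p_{\tau_1}\cdots p_{\tau_{n/2}}$ with $v_k\le\tau_1\le\cdots\le\tau_{n/2}\le w$, and the $T$-invariant ones are exactly those for which the multiset $\bigcup_i \tau_i$, viewed as a collection of indices in $\{1,\dots,n\}$, covers each of $1,\dots,n$ the same number of times — here, since the degree is $1$ and each $\tau_i\in I(2,n)$, each index from $1$ to $n$ must appear exactly once, i.e. the chain $\tau_1\le\cdots\le\tau_{n/2}$ uses the labels $\{1,\dots,n\}$ as a disjoint partition into $2$-element blocks. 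So first I would set up this dictionary: a weight-zero standard monomial of degree $1$ on $X^{v_k}_w$ is the same as a chain $\tau_1<\cdots<\tau_{n/2}$ (strict, by the disjointness) in $I(2,n)$ with $v_k\le\tau_1$, $\tau_{n/2}\le w$, partitioning $\{1,\dots,n\}$.

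Next I would translate the two bounds. The condition $\tau_{n/2}\le w=(\frac n2+2,n)$ forces the largest pair to have first coordinate $\le\frac n2+2$, and combined with the partition/chain structure this pins down the ``shape'': writing $\tau_i=(a_i,b_i)$ with $a_1<a_2<\cdots$, one shows that $a_i=i$ for small $i$ up to where the two ``halves'' $\{a_i\}$ and $\{b_i\}$ interleave, and the $b_i$'s are forced to be consecutive runs of the form $\frac n2+l,\ \frac n2+l-1,\ \frac n2+l-2$ in the three ranges $l<i$, $i<l<j$, $l>j$ — which is precisely the combinatorial content of the definition of $Y_{i,j}$. Concretely I would argue that any such chain is determined by the two indices $i<j$ marking where the top row $1,2,3,\dots$ ``catches up'' to the bottom row, and that the resulting monomial is exactly $Y_{i,j}=p_{1,i}p_{2,j}\big(\prod_{l=3}^{i-1}p_{l,\frac n2+l}\big)\big(\prod_{l=i+1}^{j-1}p_{l,\frac n2+l-1}\big)\big(\prod_{l=j+1}^{\frac n2+2}p_{l,\frac n2+l-2}\big)$. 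Then the lower bound $v_k=(1,k+1)\le\tau_1=(1,i)$ gives $i\ge k+1$, and $i<j\le\frac n2+2$ comes from the requirement that $j$ be a genuine ``second catch-up'' index not exceeding the first coordinate bound $\frac n2+2$. Conversely, for every pair $k+1\le i<j\le\frac n2+2$ one checks directly that $Y_{i,j}$ is a weight-zero standard monomial lying on $X^{v_k}_w$ (each listed Plücker coordinate $p_{a,b}$ satisfies $(a,b)\le w$, the pairs form an increasing chain, their union is $\{1,\dots,n\}$, and the minimal pair $(1,i)\ge v_k$).

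For the bookkeeping it will help to verify the index count: the three products contribute $(i-3)+(j-i-1)+(\frac n2+2-j)=\frac n2-2$ factors, plus $p_{1,i}$ and $p_{2,j}$, giving $\frac n2$ Plücker factors as required for a degree-$1$ section of $\mathcal L(\frac n2\omega_2)$; and the indices appearing are $\{1,2\}\cup\{3,\dots,i-1\}\cup\{i\}\cup\{i+1,\dots,j-1\}\cup\{j\}\cup\{j+1,\dots,\frac n2+2\}$ on the ``top'' side together with $\{\frac n2+l:3\le l\le i-1\}\cup\{\frac n2+l-1:i+1\le l\le j-1\}\cup\{\frac n2+l-2:j+1\le l\le\frac n2+2\}$ on the ``bottom'' side, and one checks these two collections partition $\{1,\dots,n\}$ — this is the routine but slightly delicate computation I would not write out in full.

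The main obstacle is the converse/exhaustiveness direction: showing that \emph{every} weight-zero standard monomial of degree $1$ on $X^{v_k}_w$ is of the form $Y_{i,j}$, i.e. that the interleaving pattern of the two rows is rigid. The key point to nail down is that once $\tau_{n/2}\le(\frac n2+2,n)$ and the $\tau_i$ form a partition chain, the first coordinates must be exactly $1,\dots$ up to some point and then the pattern of second coordinates is forced to decrease by the right amounts; I expect to prove this by induction on $i$, using that at each step the smallest unused index must become the next first coordinate $a_i$ (else the chain condition $a_i<a_{i+1}$ together with exhausting $\{1,\dots,n\}$ fails) and that the second coordinate bound $a_{n/2}\le\frac n2+2$ leaves no freedom in how the $b_i$'s are distributed. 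This is essentially a repackaging of the standard-monomial/lattice-path description of $T$-invariants in $G_{2,n}$, so I would lean on that combinatorial picture rather than re-deriving it.
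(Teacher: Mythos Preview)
Your proposal is correct and follows essentially the same approach as the paper: interpret a $T$-invariant standard monomial of degree $1$ as a standard tableau (equivalently a strict chain of pairs partitioning $\{1,\dots,n\}$) bounded by $v_k\le\tau_1$ and $\tau_{n/2}\le w$, then show the shape is rigid up to two free indices $i<j$.

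The only notable difference is in the exhaustiveness step, where the paper's execution is shorter than your proposed induction. The paper argues by pigeonhole on the columns of the tableau: since $E_{n/2,1}\le\frac{n}{2}+2$, every integer in $\{\frac{n}{2}+3,\dots,n\}$ is forced into the second column, and since the (distinct) column entries are increasing this pins down $E_{l,2}=\frac{n}{2}+l$ for $3\le l\le\frac{n}{2}$. The first column is then automatically the complement $\{1,\dots,\frac{n}{2}+2\}\setminus\{i,j\}$ in increasing order, and reading off the rows gives exactly $Y_{i,j}$; the bound $E_{1,2}\ge k+1$ from $v_k$ yields $i\ge k+1$. This global counting argument replaces your row-by-row induction and collapses the ``interleaving'' analysis to a couple of lines, but the underlying idea is the same as yours.
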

\begin{proof}
	Any standard Young tableau associated to standard monomials in $R_1$ has $\frac{n}{2}$ rows and $2$ columns with strictly increasing rows and non-decreasing columns. Let $Row_i$ denotes the $i$-th row of the tableau for all $1 \leq i \leq \frac{n}{2}$ and $E_{i,j}$ be the $(i,j)$-th entry of the tableau. Since $Row_{\frac{n}{2}} \leq (\frac{n}{2}+2,n),$ we have $E_{\frac{n}{2},1} \leq \frac{n}{2}+2.$ Hence, all the integers between $\frac{n}{2}+3$ and $n$ appears in the second column. Hence, $E_{i,2}=\frac{n}{2}+i$ for all $3 \leq i \leq \frac{n}{2}.$ Further, since $Row_1 \geq (1, k+1),$ we have $E_{1,2} \geq k+1.$ Hence, all the integers between $1$ and $k$ appears in the first column. Hence, $E_{i,1}=i$ for all $1 \leq i \leq k.$ Hence, $k+1 \leq E_{1,2}<E_{2,2}\leq \frac{n}{2}+2.$  So, the standard monomials in $R_1$ are of the form   $Y_{i,j}.$
\end{proof}
Next our goal is to study the structure of the GIT quotient of $T\backslash \backslash (X^{v_{k}}_w)^{ss}_T(\mathcal{L}(\frac{n}{2}\omega_2))$ for all $2\le k\le \frac{n}{2}-1.$ 
First we consider $T\backslash \backslash (X^{v_{k}}_w)^{ss}_T(\mathcal{L}(\frac{n}{2}\omega_2))$ for $k=\frac{n}{2}-1.$  Then we have  
\begin{lemma}
The GIT quotient $T \backslash \backslash (X^{v_{\frac{n}{2}-1}}_w)^{ss}_T(\mathcal{L}(\frac{n}{2}\omega_2))$ is isomorphic to $\mathbb{P}^2.$
\end{lemma}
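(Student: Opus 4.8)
The plan is to follow exactly the template already established in the proofs of \cref{lemma3.5} and \cref{prop3.6}, now specialized to $k = \tfrac{n}{2}-1$. First I would observe that by \cref{lemma3.7}, applied with $k = \tfrac{n}{2}-1$, the standard monomials of $R_1 = H^0(X^{v_{\frac{n}{2}-1}}_w, \mathcal{L}(\tfrac{n}{2}\omega_2))^T$ are the $Y_{i,j}$ with $\tfrac{n}{2} \le i < j \le \tfrac{n}{2}+2$; there are exactly $\binom{3}{2} = 3$ such pairs, namely $(i,j) \in \{(\tfrac{n}{2}, \tfrac{n}{2}+1), (\tfrac{n}{2}, \tfrac{n}{2}+2), (\tfrac{n}{2}+1, \tfrac{n}{2}+2)\}$. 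So $\dim R_1 = 3$, and $\operatorname{Proj}(R_{v_{\frac{n}{2}-1}, w})$ embeds in $\mathbb{P}^2$ via these three sections.

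Next I would establish that $R_{v_{\frac{n}{2}-1}, w}$ is in fact the polynomial ring on these three generators. For generation: \cref{cor3.4} gives that the GIT quotient of the Richardson variety $X^{v_{\frac{n}{2}-1}}_w$ is projectively normal, hence $R_{v_{\frac{n}{2}-1}, w}$ is generated as a $\mathbb{C}$-algebra by $R_1$, i.e.\ by the three $Y_{i,j}$'s. For the absence of relations, I would compute the dimension of the quotient: since $w = w^s_{\min}$-type element admits stable points (one checks $w(\tfrac{n}{2}\omega_2) < 0$ via \cref{lemma3.1}, or simply notes $X^{v_{\frac{n}{2}-1}}_w$ is a nonempty Richardson variety inside a Schubert variety admitting stable points and that the relevant weight space is nonzero), the quotient $T\backslash\backslash (X^{v_{\frac{n}{2}-1}}_w)$ has dimension equal to $\dim X^{v_{\frac{n}{2}-1}}_w - \dim T$. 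One computes $\ell(w) - \ell(v_{\frac{n}{2}-1}) - (n-1) = 2$, so the quotient is a $2$-dimensional projective variety generated by $3$ elements in degree one; a $2$-dimensional irreducible projective variety that is a quotient of the polynomial ring in $3$ variables must be all of $\mathbb{P}^2$ — equivalently, the graded ring generated by $3$ algebraically independent (forced by the dimension count) elements is the polynomial ring, so $\operatorname{Proj}(R_{v_{\frac{n}{2}-1},w}) \cong \mathbb{P}^2$.

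The only real point requiring care — and the likely main obstacle — is the dimension count: I need to be sure that $X^{v_{\frac{n}{2}-1}}_w$ really does carry stable points (so that the GIT quotient has the expected dimension $\dim X^{v_{\frac{n}{2}-1}}_w - \dim T$ rather than something smaller), and that the length computation $\ell(w) - \ell(v_{\frac{n}{2}-1})$ is done correctly. Using the one-line notation $w = (\tfrac{n}{2}+2, n)$ and $v_{\frac{n}{2}-1} = (1, \tfrac{n}{2})$, together with the formula for dimensions of Schubert and opposite Schubert cells in $G_{2,n}$, this is a short and routine calculation. Once $\dim(T\backslash\backslash(X^{v_{\frac{n}{2}-1}}_w)) = 2$ is confirmed, the three degree-one generators given by \cref{lemma3.7} must be algebraically independent, and projective normality from \cref{cor3.4} upgrades this to the statement that the homogeneous coordinate ring is exactly the polynomial ring $\mathbb{C}[Y_{\frac{n}{2},\frac{n}{2}+1}, Y_{\frac{n}{2},\frac{n}{2}+2}, Y_{\frac{n}{2}+1,\frac{n}{2}+2}]$, whence the quotient is $\mathbb{P}^2$.
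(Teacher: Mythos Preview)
Your overall strategy is sound and parallels the paper's, but the paper takes a different route at the key step of showing that the three degree-one generators are algebraically independent. Rather than your dimension count via stable points, the paper simply \emph{observes directly} that every product $Y_{\frac{n}{2},\frac{n}{2}+1}^{a}\,Y_{\frac{n}{2},\frac{n}{2}+2}^{b}\,Y_{\frac{n}{2}+1,\frac{n}{2}+2}^{c}$ is already a standard monomial on $X^{v_{\frac{n}{2}-1}}_w$ (one checks that all the Pl\"ucker factors can be arranged in a single nondecreasing chain between $v_{\frac{n}{2}-1}$ and $w$). Since standard monomials are linearly independent, this immediately forces algebraic independence of the three $Y$'s, and together with \cref{cor3.4} gives $R_{v_{\frac{n}{2}-1},w}\cong\mathbb{C}[Y_1,Y_2,Y_3]$. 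This is the same device used in \cref{prop3.6}, not the stable-points device from \cref{lemma3.5}.

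Your route, by contrast, hinges on knowing $\dim\bigl(T\backslash\backslash(X^{v_{\frac{n}{2}-1}}_w)^{ss}\bigr)=2$, which you justify by appealing to the existence of stable points. Here there is a genuine gap: \cref{lemma3.1} is stated and proved only for Schubert varieties $X(w)$, not for Richardson varieties, so ``$w(\tfrac{n}{2}\omega_2)<0$'' does not by itself yield stable points in $X^{v_{\frac{n}{2}-1}}_w$; and the alternative you offer --- that the Richardson variety sits inside a Schubert variety with stable points and has nonzero invariants --- does not imply that any of \emph{its} semistable points are stable. One can indeed verify this (e.g.\ by exhibiting an explicit point with finite $T$-stabilizer, or by checking that the generic $T$-orbit in the open Richardson cell is $(n{-}1)$-dimensional), but this requires a separate argument you have not supplied. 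The paper's standard-monomial check sidesteps this issue entirely and is both shorter and more self-contained.
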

\begin{proof}
By \cref{lemma3.7}, the standard monomials of $R_1$ are of the form
\begin{enumerate}
\item[(i)] $Y_{\frac{n}{2}+1,\frac{n}{2}+2}=\prod\limits_{l=1}^{\frac{n}{2}}p_{l,\frac{n}{2}+l};$

\item[(ii)] $Y_{\frac{n}{2},\frac{n}{2}+2}=p_{1,\frac{n}2}(\prod\limits_{l=2}^{\frac{n}{2}-1}p_{l,\frac{n}{2}+l})p_{\frac{n}{2}+1,n};$

\item[(iii)] $Y_{\frac{n}{2},\frac{n}{2}+1}=p_{1,\frac{n}{2}}p_{2,\frac{n}{2}+1}(\prod\limits_{l=3}^{\frac{n}{2}-1}p_{l,\frac{n}{2}+l})p_{\frac{n}{2}+2,n}.$
\end{enumerate}

We observe that $Y_{\frac{n}{2},\frac{n}{2}+1}^a Y_{\frac{n}{2},\frac{n}{2}+2}^bY_{\frac{n}{2}+1,\frac{n}{2}+2}^c$'s are standard monomials for all non-negative integers $a,b,$ and $c.$  By \cref{cor3.4}, the homogeneous coordinate ring of $T \backslash \backslash (X^{v_{\frac{n}{2}-1}}_w)^{ss}_T(\mathcal{L}(\frac{n}{2}\omega_2))$ is generated by $Y_{\frac{n}{2},\frac{n}{2}+1},Y_{\frac{n}{2},\frac{n}{2}+2}$ and $Y_{\frac{n}{2}+1, \frac{n}{2}+2}.$ Therefore, $T \backslash \backslash (X^{v_{\frac{n}{2}-1}}_w)^{ss}_T(\mathcal{L}(\frac{n}{2}\omega_2))$  is isomorphic to $\mathbb{P}^2.$
\end{proof}
Next we consider $T\backslash \backslash (X^{v_{k}}_w)^{ss}_T(\mathcal{L}(\frac{n}{2}\omega_2))$ for all $2 \leq k \leq \frac{n}{2}-2.$
\begin{lemma}
The following relations hold in $R_2.$
\begin{itemize}
\item[(i)] $Y_{i,j}Y_{m,s}=Y_{i,m}Y_{j,s};$
		
\item[(ii)] $Y_{i,m}Y_{j,s}=Y_{i,s}Y_{j,m};$ 
\end{itemize}
for all $k+1 \leq i < j < m < s \leq \frac{n}{2}+2.$
\end{lemma}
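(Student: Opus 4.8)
The plan is to show that the three products $Y_{i,j}Y_{m,s}$, $Y_{i,m}Y_{j,s}$ and $Y_{i,s}Y_{j,m}$ all coincide, as elements of $R_2$, with one and the same standard monomial $Z$ depending only on the four-element set $\{i,j,m,s\}$; then (i) and (ii) are immediate. Each $Y_{p,q}$ is by definition a product of $\frac{n}{2}$ Pl\"ucker coordinates, so each of the three products is a (generally non-standard) product of $n$ Pl\"ucker coordinates. I will first (a) read off the multiset of first indices and the multiset of second indices occurring in such a product, and then (b) straighten the product inside the homogeneous coordinate ring of $X^{v_k}_{w}$ and check that it reduces to a single standard monomial.

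For (a): from the definition, the Pl\"ucker coordinates occurring in $Y_{p,q}$ (with $k+1\le p<q\le\frac{n}{2}+2$) are $p_{1,p}$, $p_{2,q}$, the $p_{l,\frac{n}{2}+l}$ for $3\le l\le p-1$, the $p_{l,\frac{n}{2}+l-1}$ for $p+1\le l\le q-1$, and the $p_{l,\frac{n}{2}+l-2}$ for $q+1\le l\le\frac{n}{2}+2$. Hence the first indices of $Y_{p,q}$ are exactly $\{1,\ldots,\frac{n}{2}+2\}\setminus\{p,q\}$, and, since the (possibly empty) intervals $[\frac{n}{2}+3,\frac{n}{2}+p-1]$, $[\frac{n}{2}+p,\frac{n}{2}+q-2]$, $[\frac{n}{2}+q-1,n]$ partition $[\frac{n}{2}+3,n]$, the second indices of $Y_{p,q}$ are exactly $\{p,q\}\cup\{\frac{n}{2}+3,\ldots,n\}$. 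Consequently, for $k+1\le i<j<m<s\le\frac{n}{2}+2$, each of $Y_{i,j}Y_{m,s}$, $Y_{i,m}Y_{j,s}$, $Y_{i,s}Y_{j,m}$ has first-index multiset $M_1$ assigning multiplicity $2$ to every element of $\{1,\ldots,\frac{n}{2}+2\}$ except $i,j,m,s$, which get multiplicity $1$, and second-index multiset $M_2$ assigning multiplicity $1$ to each of $i,j,m,s$ and multiplicity $2$ to each of $\frac{n}{2}+3,\ldots,n$; both $M_1$ and $M_2$ depend only on $\{i,j,m,s\}$, not on the pairing.

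For (b): on $X^{v_k}_{w}$ the coordinate $p_{a,b}$ vanishes unless $k+1\le b$ and $a\le\frac{n}{2}+2$, because $p_{\underline{i}}\ne 0$ on $X^{v_k}_{w}$ precisely when $v_k=(1,k+1)\le\underline{i}\le(\frac{n}{2}+2,n)=w$. Now consider straightening one of our monomials. A non-standard adjacency is a pair of consecutive rows $(a,b)$, $(c,d)$ with $a<c$ but $b>d$, which forces $a<c<d<b$; the three-term Pl\"ucker relation then reads
\[
p_{a,b}\,p_{c,d}=p_{a,d}\,p_{c,b}-p_{a,c}\,p_{d,b},
\]
and the pair $(a,d),(c,b)$ is in standard order. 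The correction term $p_{a,c}p_{d,b}$ vanishes on $X^{v_k}_{w}$: one checks that the property ``every occurring row $(x,y)$ with $y\le\frac{n}{2}+2$ has $x\le 2$'' holds for the rows of each $Y_{p,q}$ (there, every row with first index $\ge 3$ has second index $\ge\frac{n}{2}+3$) and is preserved by each replacement $(a,b),(c,d)\mapsto(a,d),(c,b)$; granting it, if $a\ge 3$ then both rows have second index $\ge\frac{n}{2}+3$, so $p_{d,b}=0$; if $a\le 2$ and $c\ge 3$ then again $d\ge\frac{n}{2}+3$ and $p_{d,b}=0$; and the only remaining possibility is $a=1$, $c=2$, whence $p_{a,c}=p_{1,2}=0$ because $2<k+1$. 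Thus, modulo the vanishing ideal of $X^{v_k}_{w}$, straightening merely interchanges out-of-order second entries of adjacent rows, so it terminates at the unique standard tableau $Z$ whose left column is $M_1$ listed in increasing order and whose right column is $M_2$ listed in increasing order; this $Z$ is a genuine standard monomial, since its first four left entries are $1,1,2,2$ while its first four right entries are $i,j,m,s$ (all $\ge k+1\ge 3$), and beyond position four the left entries are $\le\frac{n}{2}+2$ while the right entries are $\ge\frac{n}{2}+3$. By (a) the three products give the same $M_1$ and $M_2$, hence the same $Z$, and therefore $Y_{i,j}Y_{m,s}=Y_{i,m}Y_{j,s}=Y_{i,s}Y_{j,m}=Z$ in $R_2$, which yields (i) and (ii).

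The only delicate point is the claim in (b) that the invariant ``a row with second index $\le\frac{n}{2}+2$ has first index $\le 2$'' propagates through every elementary straightening move; once this is established, the correction terms are automatically killed by the Richardson inequalities $b\ge k+1$ and $a\le\frac{n}{2}+2$, and everything else is a direct unwinding of the definition of $Y_{p,q}$ and of the standard monomial basis of $R_2$.
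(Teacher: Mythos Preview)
Your proof is correct. Both you and the paper rely on the same mechanism---the three-term Pl\"ucker relation together with the vanishing of the correction term on $X^{v_k}_w$---but the organization differs. For (i) the paper writes out $Y_{i,j}Y_{m,s}$ explicitly, isolates the two specific non-comparable blocks of factors (namely $p_{1,m}p_{2,j}$ and $\prod_{l=j}^{m-1}p_{l+1,\frac{n}{2}+l-1}\,p_{l,\frac{n}{2}+l}$), and checks case by case that each correction term $p_{1,2}p_{j,m}$ or $p_{l,l+1}p_{\frac{n}{2}+l-1,\frac{n}{2}+l}$ dies on the Richardson variety; for (ii) the paper simply observes that $Y_{i,m}Y_{j,s}$ and $Y_{i,s}Y_{j,m}$ are \emph{literally} equal as products of Pl\"ucker coordinates, so no relation is needed at all. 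You instead package the vanishing into a single invariant (``any row with second index $\le\frac{n}{2}+2$ has first index $\le 2$'') that persists through every elementary straightening move, and use it to bubble-sort all three products down to a common standard monomial $Z$ determined by the shared first- and second-index multisets $M_1,M_2$. This is a cleaner, more uniform argument and makes the symmetry in $\{i,j,m,s\}$ transparent; the trade-off is that you do not exploit the shortcut the paper takes in (ii), where the identity already holds at the level of monomials in the Pl\"ucker coordinates.
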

\begin{proof}
Proof of $(i)$:

Recall that 

$Y_{i,j}=p_{1,i}p_{2,j}(\prod\limits_{l=3}^{i-1}p_{l,\frac{n}{2}+l})(\prod\limits_{l=i+1}^{j-1}p_{l,\frac{n}{2}+l-1})(\prod\limits_{l=j+1}^{\frac{n}{2}+2}p_{l,\frac{n}{2}+l-2})$ and

$Y_{m,s}=p_{1,m}p_{2,s}(\prod\limits_{l=3}^{m-1}p_{l,\frac{n}{2}+l})(\prod\limits_{l=m+1}^{s-1}p_{l,\frac{n}{2}+l-1})(\prod\limits_{l=s+1}^{\frac{n}{2}+2}p_{l,\frac{n}{2}+l-2})).$

Since $i< j< m< s,$ we write 

$Y_{i,j}=p_{1,i}p_{2,j}(\prod\limits_{l=3}^{i-1}p_{l,\frac{n}{2}+l})(\prod\limits_{l=i+1}^{j-1}p_{l,\frac{n}{2}+l-1})(\prod\limits_{l=j+1}^{m}p_{l,\frac{n}{2}+l-2})(\prod\limits_{l=m+1}^{\frac{n}{2}+2}p_{l,\frac{n}{2}+l-2})$ and

$Y_{m,s}=p_{1,m}p_{2,s}(\prod\limits_{l=3}^{j-1}p_{l,\frac{n}{2}+l})(\prod\limits_{l=j}^{m-1}p_{l,\frac{n}{2}+l}) (\prod\limits_{l=m+1}^{s-1}p_{l,\frac{n}{2}+l-1})(\prod\limits_{l=s+1}^{\frac{n}{2}+2}p_{l,\frac{n}{2}+l-2}).$

Note that since $ i< j < m < s ,$ $Y_{i,j}Y_{m,s}$ is not standard. We apply the straightening law repeatedly on the nonstandard factors of $Y_{i,j}Y_{m,s}$ to express $Y_{i,j}Y_{m,s}$ as linear combination of standard monomials.
Further, since $i< j < m < s,$ non comparable factors of $Y_{i,j}Y_{m,s}$ are $p_{1,m}p_{2,j},$ $(\prod\limits_{l=j+1}^{m}p_{l,\frac{n}{2}+l-2})(\prod\limits_{l=j}^{m-1}p_{l,\frac{n}{2}+l}).$

By using straightening law we replace $p_{1,m}p_{2,j}$ by  $p_{1,j}p_{2,m}-p_{1,2}p_{j,m}.$
 Since we are working with $X_{w}^{v_{k}}$ where $2\le k\le \frac{n}{2}-2,$ we have $v_{k}=(1,k
 +1)\ge (1,3).$ Therefore, we have $p_{12}=0$ identically on $X_{w}^{v_{k}}.$ 
 
On the other hand, we note that $(\prod\limits_{l=j+1}^{m}p_{l,\frac{n}{2}+l-2})(\prod\limits_{l=j}^{m-1}p_{l,\frac{n}{2}+l})=\prod\limits_{l=j}^{m-1}p_{l+1,\frac{n}{2}+l-1}p_{l,\frac{n}{2}+l}.$

By using straightening law we replace $p_{l+1,\frac{n}{2}+l-1}p_{l,\frac{n}{2}+l}$ by $p_{l,\frac{n}{2}+l-1}p_{l+1,\frac{n}{2}+l}-p_{l,l+1}p_{\frac{n}{2}+l-1,\frac{n}{2}+l}$ for all $j\le l\le m-1.$

Since $k+1\le i< j < m < s,$ and $k\ge 2,$ we have $\frac{n}{2}+l-1\ge \frac{n}{2}+j-1\ge \frac{n}{2}+3$ for all $j\le l\le m-1.$ Thus, $p_{l,l+1}p_{\frac{n}{2}+l-1,\frac{n}{2}+l}=0$ identically on $X_{w}^{v_{k}}.$

Therefore, after replacing $p_{1,m}p_{2,j}$  (respectively, $p_{l+1,\frac{n}{2}+l-1}p_{l,\frac{n}{2}+l}$ for all $j\le l\le m-1$) in $Y_{i,j}Y_{m,s}$ by $p_{1,j}p_{2,m}$ (respectively, $p_{l,\frac{n}{2}+l-1}p_{l+1,\frac{n}{2}+l}$ for all $j\le l\le m-1$) we get $Y_{i,j}Y_{m,s}=Y_{i,m}Y_{j,s}.$

Proof of $(ii)$: 
	
Since $i < j <m <s,$ $Y_{i,m}$ and $Y_{j,s}$ can be written as

$Y_{i,m}=p_{1,i}p_{2,m}(\prod\limits_{l=3}^{i-1}p_{l,\frac{n}{2}+l})(\prod\limits_{l=i+1}^{j-1}p_{l,\frac{n}{2}+l-1})(\prod\limits_{l=j}^{m-1}p_{l,\frac{n}{2}+l-1})(\prod\limits_{l=m+1}^{s-1}p_{l,\frac{n}{2}+l-2})(\prod\limits_{l=s}^{\frac{n}{2}+2}p_{l,\frac{n}{2}+l-2}).$ 

$ Y_{j,s}=p_{1,j}p_{2,s}(
\prod\limits_{l=3}^{i-1}p_{l,\frac{n}{2}+l})(\prod\limits_{l=i}^{j-1}p_{l,\frac{n}{2}+l})(\prod\limits_{l=j+1}^{m-1}p_{l,\frac{n}{2}+l-1})(\prod\limits_{l=m}^{s-1}p_{l,\frac{n}{2}+l-1})(\prod\limits_{l=s+1}^{\frac{n}{2}+2}p_{l,\frac{n}{2}+l-2}).$

Then we observe that 

$p_{1,i}p_{2,s}(\prod\limits_{l=3}^{i-1}p_{l,\frac{n}{2}+l})(\prod\limits_{l=i+1}^{m-1}p_{l,\frac{n}{2}+l-1})(\prod\limits_{l=m}^{s-1}p_{l,\frac{n}{2}+l-1})(\prod\limits_{l=s+1}^{\frac{n}{2}+2}p_{l,\frac{n}{2}+l-2})$ and

$p_{1,j}p_{2,m}
(\prod\limits_{l=3}^{j-1}p_{l,\frac{n}{2}+l})(\prod\limits_{l=j+1}^{m-1}p_{l,\frac{n}{2}+l-1})(\prod\limits_{l=m+1}^{\frac{n}{2}+2}p_{l,\frac{n}{2}+l-2})$ 

are factors of $Y_{i,m}Y_{j,s}.$ Therefore, we have $Y_{i,m}Y_{j,s}=Y_{i,s}Y_{j,m}.$
\end{proof}

\begin{lemma}\label{lemma3.10}
The ring $R_{v_{k},w}$ is generated in degree $1.$
\end{lemma}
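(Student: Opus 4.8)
The plan is to read the lemma off from the projective normality already established in \cref{cor3.4}. By construction $R_{v_k,w}=\bigoplus_{d\ge 0}H^0\!\bigl(X^{v_k}_w,\mathcal{L}(\frac{n}{2}\omega_2)^{\otimes d}\bigr)^T$ is precisely the homogeneous coordinate ring of the polarized variety $\bigl(T\backslash\backslash (X^{v_k}_w)^{ss}_T(\mathcal{L}(\frac{n}{2}\omega_2)),\ \text{descent of }\mathcal{L}(\frac{n}{2}\omega_2)\bigr)$. For $2\le k\le \frac{n}{2}-1$ we have $v_k=(1,k+1)\le w=(\frac{n}{2}+2,n)$, so $X^{v_k}_w$ is a Richardson variety in $G_{2,n}$ and \cref{cor3.4} applies: this polarized variety is projectively normal. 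By the description of projective normality recalled in \cref{section2}, its homogeneous coordinate ring is then generated as a $\mathbb{C}$-algebra by its degree-one piece $R_1$, which is exactly the assertion of the lemma. Unwinding \cref{cor3.4}, the point is that the restriction map from $H^0(G_{2,n},\mathcal{L}(\frac{n}{2}\omega_2)^{\otimes d})^T$ onto $R_d$ is surjective (by \cite[Lemma 5, Proposition 1, p.~658]{BL} together with linear reductivity of $T$), and the source ring is generated in degree one by \cref{cor3.3}.

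For the later analysis of the relations it is worth recording a direct argument that makes the generation visible through the $Y_{i,j}$. By \cref{lemma3.7} the monomials $Y_{i,j}$ with $k+1\le i<j\le \frac{n}{2}+2$ form a $\mathbb{C}$-basis of $R_1$, so it suffices to show by induction on $d\ge 2$ that $R_1\cdot R_{d-1}=R_d$. Given a $T$-invariant standard monomial $P\in R_d$, its associated tableau has two columns and $\frac{dn}{2}$ rows with strictly increasing rows, weakly increasing columns, and each of $1,\dots,n$ occurring exactly $d$ times; one would split off — after applying the straightening law if necessary — a factor equal to some $Y_{i,j}$ assembled from the extremal rows of $P$, and argue that the complementary tableau lies in $R_{d-1}$. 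The straightening relations that arise are of the type in the preceding lemma, and on $X^{v_k}_w$ they degenerate to monomial identities because $p_{1,2}$ and the Plücker coordinates $p_{a,a+1}$ with $a>\frac{n}{2}+2$ vanish identically there (the same vanishings exploited in the proof of the relations lemma). The induction then closes.

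The only genuine difficulty is in this second, combinatorial route: making precise that a $Y_{i,j}$-shaped factor can always be extracted and that what remains is again standard, $T$-invariant, and bounded between $v_k$ and $w$ — exactly the bookkeeping that \cref{cor3.4} lets us bypass. Accordingly, in the write-up I would give the one-line proof via \cref{cor3.4} as the proof of the lemma, and keep the explicit $Y_{i,j}$-description as a remark, since it is that description, together with the degree-two relations of the previous lemma, that will be used to identify $\mathrm{Proj}(R_{v_k,w})$ as a projective toric variety.
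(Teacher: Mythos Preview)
Your proof is correct and matches the paper's approach: the paper's entire proof is the single line ``Proof follows from \cref{cor3.3},'' and your first paragraph is exactly this argument, only spelled out more carefully. In fact you cite \cref{cor3.4} (the Richardson-variety version) rather than \cref{cor3.3} (the full Grassmannian), which is the more precise reference for $R_{v_k,w}$; your unwinding via the surjectivity of restriction and linear reductivity of $T$ is just the content of the proof of \cref{cor3.4} itself. The additional combinatorial sketch and remark in your second and third paragraphs go beyond what the paper provides, but they are consistent with it and would be fine as commentary.
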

\begin{proof}
Proof follows from \cref{cor3.3}.
\end{proof}
Let $\tilde{R}= \mathbb{C}[y_{i,j}]$ be the polynomial ring generated by $y_{i,j},$ where $k+1 \leq i < j \leq \frac{n}{2}+2.$ Then by \cref{lemma3.10} we have a surjective ring homomorphism $\phi: \tilde{R} \longrightarrow R_{v_{k},w}$ by sending $y_{i,j}$ to $Y_{i,j}.$ Let $I$ be the ideal generated by the following relations in $\tilde{R}.$
\begin{itemize}
	\item[(i)] $y_{i,j}y_{m,s}=y_{i,m}y_{j,s},$ 
\item[(ii)] $y_{i,j}y_{m,s}=y_{i,s}y_{j,m},$
\end{itemize}
for all  $k+1 \leq i < j < m < s \leq \frac{n}{2}+2.$ 
\begin{theorem}\label{Th3.11}
The map $\phi$ induces an isomorphism to $\tilde{\phi}:\tilde{R}/I \longrightarrow R_{v_{k},w}.$
\end{theorem}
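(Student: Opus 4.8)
The plan is to show that the surjection $\tilde\phi$ is injective by comparing Hilbert functions; the geometric input is already available, so the remaining work is combinatorial. First, $\tilde\phi$ is well defined and surjective: by \cref{lemma3.10} the ring $R_{v_k,w}$ is generated by $R_1$, whose basis is $\{Y_{i,j}\}$ by \cref{lemma3.7}, so $\phi$ is onto; and the relations (i), (ii) defining $I$ are sent by $\phi$ to consequences of the relations in $R_2$ established in the lemma above, so $I\subseteq\ker\phi$ and $\phi$ descends to a surjective graded homomorphism $\tilde\phi\colon\tilde R/I\to R_{v_k,w}$. Hence it suffices to prove $\dim_{\mathbb C}(\tilde R/I)_d\le\dim_{\mathbb C}R_d$ for every $d\ge 0$, the reverse inequality being automatic.

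Call a degree $d$ monomial $y_{i_1,j_1}\cdots y_{i_d,j_d}$ in $\tilde R$ \emph{standard} if $i_1\le\cdots\le i_d$, $j_1\le\cdots\le j_d$ and $i_{t+1}\le j_t$ for $1\le t\le d-1$. The first step is to show that the standard monomials span $(\tilde R/I)_d$. Starting from an arbitrary monomial, I would repeatedly use relations (i), (ii): any two factors $y_{a,b},y_{c,d}$ on four distinct indices can be replaced by the unique standard pairing of $\{a,b,c,d\}$, and one may commute factors so that the first indices are weakly increasing. A routine check of the monotonicity of a suitable statistic --- for instance the lexicographically ordered pair given by the total spread $\sum_t(j_t-i_t)$ and the sum of its squares --- shows this rewriting terminates; since (i), (ii) each replace a monomial by a single monomial, the output is a standard monomial, so $\dim_{\mathbb C}(\tilde R/I)_d\le\#\{\text{standard monomials of degree }d\}$.

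The second step is the combinatorial identity $\#\{\text{standard monomials of degree }d\}=\dim_{\mathbb C}R_d$. On one side, sending $y_{i_1,j_1}\cdots y_{i_d,j_d}$ to the multiset $\{\!\{i_1,\dots,i_d,j_1,\dots,j_d\}\!\}$ is a bijection from degree-$d$ standard monomials onto the size-$2d$ multisets drawn from $\{k+1,\dots,\tfrac n2+2\}$ in which no value occurs more than $d$ times, the inverse being the ``sorted pairing'' $m_1\le\cdots\le m_{2d}\mapsto\prod_{t=1}^d y_{m_t,m_{t+d}}$. On the other side, I would extend \cref{lemma3.7} to arbitrary $d$: in a $T$-invariant standard monomial of degree $d$ on $X^{v_k}_w$ each of $1,\dots,n$ occurs with multiplicity $d$, and the vanishing on $X^{v_k}_w$ of every Pl\"ucker coordinate $p_{\underline a}$ with $\underline a\not\ge v_k$ or $\underline a\not\le w$ forces $1,\dots,k$ to appear only in the first column and $\tfrac n2+3,\dots,n$ only in the second; hence exactly $2d$ second-column entries are ``free'', with values in $\{k+1,\dots,\tfrac n2+2\}$ and multiplicity at most $d$, and conversely any such choice of free entries produces a standard tableau. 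Both sides are thus counted by the same family of multisets, so $\dim_{\mathbb C}(\tilde R/I)_d\le\dim_{\mathbb C}R_d$; together with surjectivity of $\tilde\phi$ this forces equality in every degree, so $\tilde\phi$ is an isomorphism.

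The main obstacle is the second step, in particular the verification that every $T$-invariant standard monomial of degree $d$ on $X^{v_k}_w$ has exactly the asserted forced shape with residual data a multiset of $2d$ free values of multiplicity $\le d$: this requires careful bookkeeping of which Pl\"ucker coordinates vanish on $X^{v_k}_w$ and of the straightening law, generalizing \cref{lemma3.7} and the $R_2$-relations lemma from degrees $1$ and $2$ to all $d$. An alternative that bypasses the tableaux is to identify $\tilde R/I$ with the edge ring $\mathbb C[\,x_ix_j : k+1\le i<j\le\tfrac n2+2\,]$ of a complete graph --- the relations (i), (ii) being its $4$-cycle binomials, which generate its toric ideal --- and to compare the Hilbert function of this edge ring with that of $R_{v_k,w}$ directly; I would resort to this only if the direct tableau argument becomes unwieldy.
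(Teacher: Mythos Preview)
Your Hilbert-function strategy is a reasonable alternative to the paper's Diamond Lemma argument, and the computation of $\dim_{\mathbb C}R_d$ via ``free'' second-column entries (extending \cref{lemma3.7}) is correct. But Step~2 has a genuine gap: the claimed bijection between your standard monomials and size-$2d$ multisets with bounded multiplicity fails. Take $d=3$ and (in the allowed range) the multiset $\{a,a,b,c,e,e\}$ with $a<b<c<e$; both $y_{a,c}\,y_{a,e}\,y_{b,e}$ and $y_{a,b}\,y_{a,e}\,y_{c,e}$ satisfy $i_1\le i_2\le i_3$, $j_1\le j_2\le j_3$, $i_{t+1}\le j_t$, yet they carry the same multiset (and are equal in $\tilde R/I$ via $y_{a,c}y_{b,e}=y_{a,b}y_{c,e}$). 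Even with all indices distinct the same happens: for $p<q<r<s<t<u$ both $y_{p,r}y_{q,t}y_{s,u}$ and $y_{p,s}y_{q,t}y_{r,u}$ are standard in your sense. So your standard monomials strictly overcount the multisets, the inequality $\dim(\tilde R/I)_d\le\#\{\text{standard}\}$ is not tight, and the chain $\dim(\tilde R/I)_d\le\#\{\text{standard}\}=\dim R_d\le\dim(\tilde R/I)_d$ does not close. (There is a smaller issue in Step~1 as well: passing from the nested pair $y_{p,s}y_{q,r}$ to the crossing pair $y_{p,r}y_{q,s}$ leaves the total spread unchanged and \emph{decreases} the sum of squared spreads, so the lexicographic statistic you propose is not monotone in a single direction.)

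The fix is to tighten ``standard'' to exactly the sorted pairing, i.e.\ require also $i_d\le j_1$; then your standard monomials are in bijection with the admissible multisets by construction, and spanning follows once you show any two monomials with the same index multiset are equal modulo $I$. That last connectivity statement is precisely what the paper establishes via Bergman's Diamond Lemma: the paper orients the relations toward the \emph{nested} normal form and checks all overlap ambiguities on three factors resolve to $y_{i,q}y_{j,p}y_{m,s}$, so the fully nested monomials form a $\mathbb C$-basis of $\tilde R/I$. Either normal form (fully nested or sorted/crossing) is in bijection with multisets; combined with your tableau count of $\dim R_d$ --- a step the paper leaves implicit --- this finishes the argument. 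Your edge-ring alternative is also correct and perhaps the cleanest route.
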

\begin{proof}
We show that the diamond lemma of ring theory holds for this reduction system which implies is that any monomial in the $y_{i,j}$'s reduces, after applying these reductions (in any order, when multiple reduction rules apply) to a unique expression in the $y_{i,j}$'s, in which no term is divisible by a term appearing in the left hand side of the above reduction system (see \cite{Ber}).
	
We prove that the diamond lemma holds for this reduction system by looking at $y_{i,j}y_{m,s}y_{p,q},$ $1 \leq i < j < m < s < p < q \leq \frac{n}{2}+2.$

\begin{tikzpicture}[scale=.5]
\node (a) at (0,0)  {$y_{i,j}y_{m,s}y_{p,q}$};
\node (b) at (-4,-3) {$y_{i,m}y_{j,s}y_{p,q}$};
%  \node (c) at (0,-3) {$y_{i,p}y_{j,q}y_{m,s}$};
\node (d) at (4,-3) {$y_{i,j}y_{m,p}y_{s,q}$};
\node (e) at (-6,-6) {$y_{i,s}y_{j,m}y_{p,q}$};
\node (g) at (2,-6) {$y_{i,m}y_{j,p}y_{s,q}$};
\node (h) at (-8,-9) {$y_{i,p}y_{s,q}y_{j,m}$};
\node (i) at (0,-9) {$y_{i,s}y_{j,p}y_{m,q}$};
\node (k) at (-10,-12) {$y_{i,q}y_{s,p}y_{j,m}$};
\node (j) at (-2,-12) {$y_{i,p}y_{j,s}y_{m,q}$};
\node (l) at (-6,-17) {$y_{i,q}y_{j,s}y_{m,p}$};
\node (m) at (-2,-21) {$y_{i,q}y_{j,p}y_{m,s}$};
\node (n) at (2,-17) {$y_{i,p}y_{j,q}y_{m,s}$};
%         \node (o) at (4,-12) {$y_{i,q}y_{m,s}y_{j,p}$};
\node (p) at (8,-12) {$y_{i,s}y_{j,q}y_{m,p}$};
\node (f) at (8,-9) {$y_{i,m}y_{j,q}y_{s,p}$};
\node (q) at (6,-6) {$y_{i,j}y_{m,q}y_{s,p}$};
\draw (a) -- (b)--(e);
\draw (b)--(j);
\draw (b)--(g);
\draw (a) -- (n);
\draw (a) -- (d);
\draw (e) --(h);
\draw (e) -- (i);
\draw (h) -- (k) -- (l) --(m);
\draw (h) -- (j);
\draw (j) -- (n) -- (m);
\draw (p) -- (i) -- (j);
\draw (i) -- (m);
\draw (p) -- (l);
\draw (p) -- (n);
\draw (j) -- (l);
\draw (g) -- (h);
\draw (g) -- (i);
\draw (g) -- (f);
\draw (f) -- (k);
\draw (f) -- (p);
\draw (d) -- (g);
\draw (d) -- (p);
\draw (d) -- (q);
\draw (q) -- (f);
\draw (q) -- (i);
\end{tikzpicture}

Note that in the above diagram an edge between two vertices means one is obtained from other by applying a reduction. We see that after applying the reductions the monomial $y_{i,j}y_{m,s}y_{p,q}$ reduces to the unique expression $y_{i,q}y_{j,p}y_{m,s}.$ Therefore, $\tilde{\phi}$ is an isomorphism.	
\end{proof}
As a consequence of Theorem \ref{Th3.11} we have the following
\begin{corollary}\label{Cor 3.12}
The GIT quotient of $T\backslash \backslash (X^{v_{k}}_w)^{ss}_T(\mathcal{L}(\frac{n}{2}\omega_2)$ is a toric variety for $2\le k\le \frac{n}{2}-2$. 
\end{corollary}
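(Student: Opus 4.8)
The plan is to combine Theorem \ref{Th3.11} with the standard fact that a projective variety whose homogeneous coordinate ring is the quotient of a polynomial ring by a binomial prime ideal is a projective toric variety. By \cref{lemma3.10}, the ring $R_{v_k,w}$ is generated in degree one, so $T\backslash\backslash (X^{v_k}_w)^{ss}_T(\mathcal{L}(\frac{n}{2}\omega_2)) = \mathrm{Proj}(R_{v_k,w})$, and Theorem \ref{Th3.11} identifies $R_{v_k,w}$ with $\tilde R / I$, where $\tilde R = \mathbb{C}[y_{i,j}]$ and $I$ is generated by the binomials $y_{i,j}y_{m,s}-y_{i,m}y_{j,s}$ and $y_{i,j}y_{m,s}-y_{i,s}y_{j,m}$ for $k+1\le i<j<m<s\le \frac n2+2$.

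First I would observe that $I$ is a binomial ideal and that $R_{v_k,w}=\tilde R/I$ is a domain: it is the homogeneous coordinate ring of the irreducible variety $T\backslash\backslash (X^{v_k}_w)^{ss}_T(\mathcal L)$ — irreducibility of the quotient follows from irreducibility of the Richardson variety $X^v_w$ together with the fact that the GIT quotient of an irreducible variety is irreducible — so $I$ is a prime binomial ideal. A prime binomial ideal is automatically a toric (lattice) ideal, i.e. the kernel of a monomial map $\mathbb{C}[y_{i,j}]\to \mathbb{C}[t_1,\dots]$; concretely, the diamond-lemma normal form produced in the proof of Theorem \ref{Th3.11} exhibits a monomial basis of $R_{v_k,w}$ indexed by a sub-semigroup of a lattice, which is exactly the data of an affine semigroup ring. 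Hence $R_{v_k,w}$ is a semigroup algebra $\mathbb{C}[\mathcal S]$ for a finitely generated semigroup $\mathcal S$ generated in degree one, and $\mathrm{Proj}(\mathbb{C}[\mathcal S])$ is by definition a projective toric variety, on which the torus acting is $(\mathbb{C}^*)^{\dim}$ where the dimension equals $\frac n2 - k$ (the number of independent generators after accounting for the relations, consistent with \cref{prop3.6}-type dimension counts).

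The main obstacle is making the toric structure honest rather than formal: one must exhibit an explicit torus embedded as a dense orbit. The cleanest route is to use the standard-monomial basis. The reductions in Theorem \ref{Th3.11} show every monomial in the $y_{i,j}$ reduces to a unique normal form, so these normal-form monomials form a $\mathbb{C}$-basis of $R_{v_k,w}$; reading off which monomials are normal forms gives a finitely generated cancellative torsion-free semigroup $\mathcal S\subseteq \mathbb{Z}^{N}$ with $R_{v_k,w}\cong \mathbb{C}[\mathcal S]$ as graded rings (the grading being total degree in the $y_{i,j}$). Since $R_{v_k,w}$ is a domain, $\mathcal S$ is cancellative and torsion-free, so $\mathbb{C}[\mathcal S]$ is the coordinate ring of an affine toric variety, and $\mathrm{Proj}$ of it — equivalently the GIT quotient — is a projective toric variety. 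I expect verifying cancellativity/torsion-freeness of $\mathcal S$ to be immediate from $R_{v_k,w}$ being an integral domain, so the real content has already been done in Theorem \ref{Th3.11}, and the corollary is essentially a formal consequence; the proof in the paper presumably just cites \cite{Ber} or a standard toric-geometry reference for the implication ``projectively normal ring presented by a prime binomial ideal $\Rightarrow$ toric''.
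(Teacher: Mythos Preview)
Your proposal is correct and follows essentially the same line as the paper's proof: both deduce from Theorem~\ref{Th3.11} that $R_{v_k,w}\cong \tilde R/I$ with $I$ a prime ideal generated by binomials, and then invoke the standard fact that such a presentation yields a toric variety. The paper makes this last step precise by citing \cite[Proposition~1.1.11]{Cox} to get that $\mathrm{Spec}(R_{v_k,w})$ is an affine toric variety and then \cite[Proposition~2.1.4]{Cox} to pass to $\mathrm{Proj}$, which is exactly the ``standard toric-geometry reference'' you anticipated; one minor aside---your parenthetical dimension count $\frac{n}{2}-k$ refers to the $w^s_{\min}$ case of Proposition~\ref{prop3.6} rather than the present $w=(\frac{n}{2}+2,n)$, but this does not affect the argument.
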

\begin{proof} 
It follows from Theorem \ref{Th3.11} that the ideal $I$ of $R_{v_{k},w}$ is a prime ideal and generated by differences of two monomials. Thus, by \cite[Proposition 1.1.11.,p.16]{Cox}, the affine variety $Spec(R_{v_{k},w})$ is a toric variety. Therefore, by \cite[Proposition 2.1.4.,p.56]{Cox},  $T\backslash \backslash (X^{v_{k}}_w)^{ss}_T(\mathcal{L}(\frac{n}{2}\omega_2)=Proj(R_{v_{k},w})$ is a toric variety.
\end{proof}

\section{Torus quotient of $G_{2,6}$}\label{section4}
We recall that the Grassmannian $G_{2,n}$ is a closed subvariety of $\mathbb{P}^{N-1}$ where $N=\begin{pmatrix}
n\\
2
\end{pmatrix},$ given by the Pl\"ucker embedding. In this section, first we recall the Pl\"ucker relations for $G_{2,n}$ which will be used later.
\begin{theorem}
The Grassmannian $G_{2,n}$ consists of the zeroes in $\mathbb{P}^{N-1}$ of the following quadratic polynomials:
\begin{center}
$p_{i,l}p_{j,k}-p_{i,k}p_{j,l}+p_{i,j}p_{k,l}$
\end{center}
where $1\le i<j<k<l\le n$
(A relation obtained by equating an expression to $0$ is usually referred to as a Pl\"ucker relation).
\end{theorem}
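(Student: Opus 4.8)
The plan is to establish the displayed statement as a set-theoretic equality of two subsets of $\mathbb{P}^{N-1}$, namely the Pl\"ucker image of $G_{2,n}$ and the common zero locus of the quadrics $p_{i,l}p_{j,k}-p_{i,k}p_{j,l}+p_{i,j}p_{k,l}$, by proving the two inclusions separately.

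For the inclusion ``Pl\"ucker image $\subseteq$ zero locus'' I would represent a $2$-dimensional subspace $U\subseteq\mathbb{C}^n$ as the row span of a $2\times n$ matrix with rows $(a_1,\ldots,a_n)$ and $(b_1,\ldots,b_n)$, so that the Pl\"ucker coordinate $p_{i,j}$ is the minor $a_ib_j-a_jb_i$. Substituting this into $p_{i,l}p_{j,k}-p_{i,k}p_{j,l}+p_{i,j}p_{k,l}$ and expanding, the resulting expression vanishes identically; this is the classical three-term Grassmann--Pl\"ucker identity for $2\times 2$ minors and is a direct computation.

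For the reverse inclusion, let $[p_{i,j}]\in\mathbb{P}^{N-1}$ satisfy all the relations, with the $p_{i,j}$ not all zero. The common zero locus of the quadrics is stable under the $S_n$-action permuting the indices, since a permutation carries the set of relations to itself up to signs; and $G_{2,n}$ is likewise $S_n$-stable. Hence, after applying a permutation I may assume $p_{1,2}\neq 0$, and then rescale so that $p_{1,2}=1$. Now form the $2\times n$ matrix $A$ whose first two columns are the standard basis vectors of $\mathbb{C}^2$ and whose $j$-th column, for $3\le j\le n$, is $(-p_{2,j},\,p_{1,j})^{\mathrm{T}}$. This matrix has rank $2$, so it defines a point of $G_{2,n}$ with some Pl\"ucker coordinates $q_{i,j}$. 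By construction $q_{1,2}=1$ and $q_{1,j}=p_{1,j}$, $q_{2,j}=p_{2,j}$ for $j\ge 3$; and for $3\le i<j\le n$ one computes $q_{i,j}=p_{1,i}p_{2,j}-p_{1,j}p_{2,i}$, which equals $p_{i,j}$ exactly by the Pl\"ucker relation on the index set $\{1,2,i,j\}$ (which reads $p_{1,j}p_{2,i}-p_{1,i}p_{2,j}+p_{1,2}p_{i,j}=0$, and $p_{1,2}=1$). Therefore $[q_{i,j}]=[p_{i,j}]$, so the point lies in $G_{2,n}$.

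The routine-but-essential step is the reverse inclusion: the explicit reconstruction of $A$ from the prospective Pl\"ucker coordinates and the verification that all $\binom{n}{2}$ of its minors recover the $p_{i,j}$ --- this is the only place where the relations enter, and in fact only those relations containing the nonvanishing coordinate $p_{1,2}$ are needed once the normalization $p_{1,2}=1$ is fixed. The first inclusion is a formal polynomial identity. Since the statement concerns the zero set, equality of the two subsets suffices and no assertion about the full homogeneous ideal is required.
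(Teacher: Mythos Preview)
Your argument is correct. The forward inclusion is the classical three-term identity for $2\times 2$ minors, and your verification is fine. For the reverse inclusion, the reduction to $p_{1,2}\neq 0$ via the $S_n$-action is legitimate (the set of quadrics is stable under simultaneous index permutation up to sign, so the zero locus is $S_n$-stable), and the reconstruction of the $2\times n$ matrix $A$ with columns $e_1,e_2,(-p_{2,j},p_{1,j})^{\mathrm T}$ recovers all minors: the only nontrivial case $3\le i<j$ gives $q_{i,j}=p_{1,i}p_{2,j}-p_{1,j}p_{2,i}$, which equals $p_{i,j}$ by the relation on $\{1,2,i,j\}$ with $p_{1,2}=1$. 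That settles the set-theoretic equality asserted.

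By comparison, the paper does not give an argument at all: it simply cites \cite[Theorem~4.1.3.1, p.~31]{LS}. Your proof is therefore not the same approach as the paper's---it is a self-contained, elementary proof of a result the paper only quotes. What you gain is that no external input is needed beyond linear algebra; what the reference buys is, of course, brevity, and it actually proves the stronger statement that these quadrics generate the homogeneous ideal (not merely cut out the variety set-theoretically), which is what is really used in later sections when straightening laws are applied. Your observation that only the relations containing the chosen nonvanishing coordinate are needed is a nice feature of the argument, though note that for the paper's subsequent use of Pl\"ucker relations as \emph{rewriting rules} in the homogeneous coordinate ring, the ideal-theoretic version is what matters.
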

\begin{proof}
See \cite[Theorem 4.1.3.1,p.31]{LS}.
\end{proof}

In this section, we study the GIT quotient of $G_{2,6}.$ 

Let $X= T\backslash\backslash (G_{2,6})^{ss}_T(\mathcal{L}(3\omega_2)).$
Let $R=\bigoplus\limits_{d\ge 0} R_{d},$ where  $R_{d}=H^0(G_{2,6}, \mathcal{L}^{\otimes d}(3\omega_{2}))^{T}.$ We note that $X=Proj(R)$ and $R_{d}$'s   are finite dimensional vector space. Let  $X_{1}=p_{14}p_{25}p_{36},$ $X_{2} =p_{12}p_{35}p_{46},$ $X_{3}=p_{13}p_{25}p_{46},$ $X_{4}=p_{12}p_{34}p_{56}$ and $X_{5}=p_{13}p_{24}p_{56}.$ Then the set $\{X_{1}, X_{2}, X_{3}, X_{4}, X_{5}\}$ forms a basis of $R_{1}.$  Let $Y_{1}=p_{12}p_{14}p_{24}p_{35}p_{36}p_{56}$ $Y_{2}=p_{12}p_{13}p_{23}p_{45}p_{46}p_{56},$ $W_{1}=X_4Y_2=p_{12}^{2}p_{13}p_{23}p_{34}p_{45}p_{46}p_{56}^{2},$ $W_{2}=X_2Y_2=p_{12}^{2}p_{13}p_{23}p_{35}p_{45}p_{46}^{2}p_{56},$ $W_{3}=X_5Y_2=p_{12}p_{13}^{2}p_{23}p_{24}p_{45}p_{46}p_{56}^{2}.$

In the following lemma we find out the relation among $X_i$'s and $Y_i$'s in $R_2$. 

\begin{lemma}\label{lemma4.1}
	Let $X_i$'s and $Y_i$'s be as above. Then we have the followings:
	
	\begin{itemize}
		\item[(1)]
		$X_1X_4=Y_1- X_2X_5+ X_4X_5- X_4^{2} +X_2X_4$. \hspace{2cm} 
		
		\item[(2)]
		$X_3X_4= X_2X_5 -Y_2$. \hspace{6cm} 
	\end{itemize}
\end{lemma}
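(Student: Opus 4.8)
The plan is to prove both identities inside the homogeneous coordinate ring of $G_{2,6}$ in its Pl\"ucker embedding --- the polynomial ring $\mathbb{C}[p_{ij}]$ modulo the Pl\"ucker ideal --- and then to observe that they are precisely the asserted identities in $R_2$. Indeed, every monomial occurring in the statement is a product of six Pl\"ucker coordinates in which each of the indices $1,\dots,6$ appears exactly twice, hence is $T$-invariant and lies in $R_2$; since the standard monomials form a basis of $R_2$, it suffices to rewrite both sides of each identity as a linear combination of standard monomials, using the straightening law coming from the quadratic Pl\"ucker relations $p_{il}p_{jk}-p_{ik}p_{jl}+p_{ij}p_{kl}=0$, and to compare.

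For $(2)$ a single Pl\"ucker relation suffices. First I would factor $X_2X_5 = (p_{12}p_{13}p_{46}p_{56})\,p_{24}p_{35}$ and $X_3X_4 = (p_{12}p_{13}p_{46}p_{56})\,p_{25}p_{34}$; the relation with $(i,j,k,l)=(2,3,4,5)$ gives $p_{24}p_{35}-p_{25}p_{34}=p_{23}p_{45}$, whence $X_2X_5-X_3X_4 = p_{12}p_{13}p_{23}p_{45}p_{46}p_{56}=Y_2$, which is $(2)$.

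For $(1)$ the same idea is carried out recursively. The product $X_1X_4 = p_{12}p_{14}p_{25}p_{34}p_{36}p_{56}$ is non-standard; straightening the incomparable pair $p_{25}p_{34}$ yields the standard monomial $Y_1 = p_{12}p_{14}p_{24}p_{35}p_{36}p_{56}$ together with a multiple of $p_{14}p_{23}$. Straightening in turn the non-standard monomials that appear --- first the pair $p_{14}p_{23}$, then the pair $p_{36}p_{45}$ occurring in each of the two resulting terms --- reduces everything, after four applications of the Pl\"ucker relation in all, to the standard monomials $X_2X_5$, $X_4X_5$, $X_2X_4$ and $X_4^2$. Keeping track of the signs that arise gives $X_1X_4 = Y_1 - X_2X_5 + X_4X_5 - X_4^2 + X_2X_4$, which is $(1)$.

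The argument is entirely mechanical; the step I expect to demand the most care is the bookkeeping in $(1)$ --- tracking the signs through the recursion and correctly recognizing each straightened sextuple of Pl\"ucker coordinates as a product of two of the chosen basis elements of $R_1$ (or as $Y_1$). By the confluence of the straightening procedure for the Grassmannian the final standard-monomial expansion is independent of the order in which the reductions are performed, so this is the only source of possible error.
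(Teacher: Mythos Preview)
Your proposal is correct and follows essentially the same approach as the paper: the paper straightens $X_1X_4$ by successively applying the Pl\"ucker relations to $p_{25}p_{34}$, then $p_{14}p_{23}$, and then $p_{36}p_{45}$ in each of the two remaining terms, and straightens $X_3X_4$ with the single relation on $p_{25}p_{34}$ --- exactly the sequence you outline. The only difference is presentational: the paper writes out each intermediate line explicitly, while you summarize the recursion.
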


\begin{proof}
	Proof of $(1):$ Note that 	
	\begin{equation*}
	\begin{aligned}
	X_1X_4 & =p_{12}p_{14}p_{25}p_{34}p_{36}p_{56} [\text{by using}~ p_{25}p_{34}=p_{35}p_{24}-p_{45}p_{23}]\\
	       & = p_{12}p_{14}p_{36}p_{56}(p_{35}p_{24}-p_{45}p_{23})\\
	        & =p_{12}p_{14}p_{24}p_{35}p_{36}p_{56}-p_{12}(p_{14}p_{23})p_{36}p_{45}p_{56}\\ 
	        & =p_{12}p_{14}p_{24}p_{35}p_{36}p_{56}-p_{12}(p_{13}p_{24}-p_{12}p_{34})p_{36}p_{45}p_{56}, [\text{by using}~ p_{14}p_{23}=p_{13}p_{24}-p_{12}p_{34}]\\
	        & =p_{12}p_{14}p_{24}p_{35}p_{36}p_{56}-p_{12}p_{13}p_{24}(p_{36}p_{45})p_{56}+p_{12}^2p_{34}(p_{36}p_{45})p_{56}\\
	       &=p_{12}p_{14}p_{24}p_{35}p_{36}p_{56}-p_{12}p_{13}p_{24}(p_{35}p_{46}-p_{34}p_{56})p_{56}\\
	       & \hspace{.5cm}+p_{12}^2p_{34}(p_{35}p_{46}-p_{34}p_{56})p_{56} [\text{by using}~ p_{36}p_{45}=p_{35}p_{46}-p_{34}p_{56}]\\ 
	       &=p_{12}p_{14}p_{24}p_{35}p_{36}p_{56}-p_{12}p_{13}p_{24}p_{35}p_{46}p_{56}+p_{12}p_{13}p_{24}p_{34}p_{56}^2+p_{12}^2p_{34}p_{35}p_{46}p_{56}-p_{12}^2p_{34}^2p_{56}^2\\
	        &=Y_1-X_2X_5+X_4X_5+X_2X_4-X_4^2.
	\end{aligned}
	\end{equation*}
	
	Proof of $(2):$ Note that 
	\begin{equation*}
	\begin{aligned}
	X_3X_4 &=p_{12}p_{13}p_{25}p_{34}p_{46}p_{56}\\
	&=p_{12}p_{13}p_{24}p_{35}p_{46}p_{56}-p_{12}p_{13}p_{23}p_{45}p_{46}p_{56} [\text{by using}~ p_{25}p_{34}=p_{24}p_{35}-p_{23}p_{45}]\\
	&=X_2X_5-Y_2.
	\end{aligned}
	\end{equation*} 
\end{proof}

\begin{lemma}\label{lemma4.2}
Let $X_i$'s and $W_i$'s be as above. Then we have the followings:
	
	\begin{itemize}

		\item[(1)] $X_4^2X_3=X_2X_4X_5-W_1$

		\item[(2)] $X_1X_3X_4=X_1X_2X_5-X_2X_3X_5+X_2^2X_5-W_2+X_2X_5^2-W_3-X_2X_4X_5+W_1$

		\item[(3)] $X_2X_3X_4=X_2^2X_5-W_2$
		
		\item[(4)] $X_3X_4X_5=X_2X_5^2-W_3$ 
		
	\end{itemize}
\end{lemma}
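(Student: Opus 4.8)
The plan is to derive all four identities from the two relations of Lemma~\ref{lemma4.1} by multiplying them with suitable degree-one elements of $R$, using only the definitions $W_1=X_4Y_2$, $W_2=X_2Y_2$, $W_3=X_5Y_2$ and one elementary monomial coincidence; no new straightening computation should be needed. Concretely, Lemma~\ref{lemma4.1}(2) reads $X_3X_4=X_2X_5-Y_2$, and every identity valid in $R$ may be multiplied by an element of $R_1$.

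First I would dispose of parts (1), (3), (4). Multiplying $X_3X_4=X_2X_5-Y_2$ by $X_4$ gives $X_4^2X_3=X_2X_4X_5-X_4Y_2=X_2X_4X_5-W_1$, which is (1); multiplying by $X_2$ gives $X_2X_3X_4=X_2^2X_5-X_2Y_2=X_2^2X_5-W_2$, which is (3); multiplying by $X_5$ gives $X_3X_4X_5=X_2X_5^2-X_5Y_2=X_2X_5^2-W_3$, which is (4). Each of these is immediate once one observes that the product is being taken inside the homogeneous coordinate ring and then invokes the definition of the relevant $W_i$.

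For part (2) the idea is to use the other relation of Lemma~\ref{lemma4.1}, namely $X_1X_4=Y_1-X_2X_5+X_4X_5-X_4^2+X_2X_4$, and multiply it by $X_3$:
\[
X_1X_3X_4=X_3Y_1-X_2X_3X_5+X_3X_4X_5-X_3X_4^2+X_2X_3X_4.
\]
Now I would substitute the three identities just proved, $X_3X_4X_5=X_2X_5^2-W_3$ from (4), $X_3X_4^2=X_2X_4X_5-W_1$ from (1), and $X_2X_3X_4=X_2^2X_5-W_2$ from (3). The only remaining term is $X_3Y_1$, and here the key point is the monomial identity $X_3Y_1=X_1X_2X_5$: both sides expand to $p_{12}p_{13}p_{14}p_{24}p_{25}p_{35}p_{36}p_{46}p_{56}$, so they are literally equal as sections and no Pl\"ucker relation is invoked. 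Substituting this and collecting terms yields precisely $X_1X_3X_4=X_1X_2X_5-X_2X_3X_5+X_2^2X_5-W_2+X_2X_5^2-W_3-X_2X_4X_5+W_1$, which is (2).

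The only genuinely computational step is checking the coincidence $X_3Y_1=X_1X_2X_5$ by writing out the nine Pl\"ucker factors on each side; everything else is formal manipulation of previously established relations. I expect that verification to be the one (trivial) obstacle. An alternative derivation of (2) — multiplying $X_3X_4=X_2X_5-Y_2$ directly by $X_1$ — would force one to rewrite $X_1Y_2$, which is not standard, so I would avoid that route in favour of the one through Lemma~\ref{lemma4.1}(1).
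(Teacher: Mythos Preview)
Your proof is correct. For parts (1), (3), (4) it coincides verbatim with the paper's argument: multiply Lemma~\ref{lemma4.1}(2) by $X_4$, $X_2$, $X_5$ respectively and use the definitions of $W_1,W_2,W_3$.

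For part (2) the two approaches diverge. The paper takes exactly the route you explicitly reject: it multiplies $X_3X_4=X_2X_5-Y_2$ by $X_1$ and then straightens $X_1Y_2$ via several successive Pl\"ucker relations ($p_{14}p_{23}=p_{13}p_{24}-p_{12}p_{34}$, $p_{36}p_{45}=p_{35}p_{46}-p_{34}p_{56}$, $p_{25}p_{34}=p_{24}p_{35}-p_{23}p_{45}$), eventually obtaining $X_1Y_2=X_2X_3X_5-X_2^2X_5+W_2-X_2X_5^2+W_3+X_2X_4X_5-W_1$. You instead multiply Lemma~\ref{lemma4.1}(1) by $X_3$ and observe the pure monomial coincidence $X_3Y_1=X_1X_2X_5$ (both equal $p_{12}p_{13}p_{14}p_{24}p_{25}p_{35}p_{36}p_{46}p_{56}$), then feed in the already-proved (1), (3), (4). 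Your route is shorter and requires no Pl\"ucker straightening at all; the paper's route is self-contained in that it does not need the monomial coincidence, but pays for this with a multi-step computation. Both are valid and yield the same identity.
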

\begin{proof}

Proof of $(1)$: By using \cref{lemma4.1}$(2)$, 
$X_3X_4^2=X_2X_4X_5-W_1$.

Proof of $(2)$: By \cref{lemma4.1}$(2)$, 
$X_1X_3X_4=X_1X_2X_5-X_1Y_2$.

On the other hand, using the Pl\"{u}cker relations 
\begin{equation*}
\begin{aligned} 
X_1Y_2 & =p_{12}p_{13}(p_{14}p_{23})p_{25}(p_{36}p_{45})p_{46}p_{56}\\
       &=p_{12}p_{13}(p_{13}p_{24}-p_{12}p_{34})p_{25}(p_{35}p_{46}-p_{34}p_{56})p_{46}p_{56} ~[\text{by using } p_{14}p_{23}=p_{13}p_{24}-p_{12}p_{34},\\
       & \hspace{.5cm} \text{ and } p_{36}p_{45}=p_{35}p_{46}-p_{34}p_{56}]\\
       &=p_{12}p_{13}^2p_{24}p_{25}p_{35}p_{46}^2p_{56}-p_{12}^2p_{13}(p_{34}p_{25})p_{35}p_{46}^2p_{56}\\
      &\hspace{.5cm}-p_{12}p_{13}^2p_{24}(p_{25}p_{34})p_{46}p_{56}^2+p_{12}^2p_{13}p_{34}(p_{34}p_{25})p_{46}p_{56}^2\\
      &=p_{12}p_{13}^2p_{24}p_{25}p_{35}p_{46}^2p_{56}-p_{12}^2p_{13}(p_{24}p_{35}-p_{23}p_{45})p_{35}p_{46}^2p_{56}\\
      &\hspace{.5cm} -p_{12}p_{13}^2p_{24}(p_{24}p_{35}-p_{23}p_{45})p_{46}p_{56}^2+p_{12}^2p_{13}p_{34}(p_{24}p_{35}-p_{23}p_{45})p_{46}p_{56}^2 \\
      & \hspace{.5cm}[\text{by using } p_{25}p_{34}=p_{24}p_{35}-p_{23}p_{45}]\\
      & = p_{12}p_{13}^2p_{24}p_{25}p_{35}p_{46}^2p_{56}-p_{12}^2p_{13}p_{24}p_{35}^2p_{46}^2p_{56}+p_{12}^2p_{13}p_{23}p_{35}p_{45}p_{46}^2p_{56}\\
      & \hspace{.5cm} -p_{12}p_{13}^2p_{24}^2p_{35}p_{46}p_{56}^2+ p_{12}p_{13}^2p_{23}p_{24}p_{45}p_{46}p_{56}^2+p_{12}^2p_{13}p_{24}p_{34}p_{35}p_{46}p_{56}^2\\
      &\hspace{.5cm} -p_{12}^2p_{13}p_{23}p_{34}p_{45}p_{46}p_{56}^2\\
      &=X_2X_3X_5-X_2^2X_5+W_2-X_5^2X_2+W_3+X_2X_4X_5-W_1.
\end{aligned}
\end{equation*}

Therefore, $X_1X_3X_4=X_1X_2X_5-X_2X_3X_5+X_2^2X_5-W_2+X_5^2X_2-W_3-X_2X_4X_5+W_1.$

Proof of $(4)$: By using \cref{lemma4.1}$(2)$,
$X_2X_3X_4=X_2^2X_5-W_2.$

Proof of $(5)$: By using \cref{lemma4.1}$(2)$,
$X_3X_4X_5=X_2X_5^2-W_3.$
\end{proof}
Recall that by \cref{cor3.3}, $R$ is a graded $\mathbb{C}$-algebra which is an integral domain and generated by $R_1$ elements $X_1,X_2,X_3,X_4,X_5.$

Let $\tilde{R}=\mathbb{C}[x_{1}, x_{2}, x_{3}, x_{4}, x_{5}]$ be the polynomial ring in $x_{i}$'s for $1\le i\le 5.$
\begin{lemma}\label{lemma4.3}
There is no linear or quadratic relation among $X_{i}$'s in $R.$
\end{lemma}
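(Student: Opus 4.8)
The plan is to prove that the kernel of the surjection $\tilde{R} = \mathbb{C}[x_1,\dots,x_5] \twoheadrightarrow R$ contains no nonzero element of degree $1$ or $2$. The degree $1$ case is immediate: by \cref{cor3.3} the ring $R$ is generated by $R_1$, and the five monomials $X_1,\dots,X_5$ were already asserted to form a \emph{basis} of $R_1$, so they are linearly independent and there is no linear relation. The substance is therefore the quadratic case, and here the first step is to understand $\dim_{\mathbb{C}} R_2$. Since $T\backslash\backslash (G_{2,6})^{ss}_T(\mathcal{L}(3\omega_2))$ is a $3$-dimensional projective variety (the quotient of the $7$-dimensional $G_{2,6}$ by the $5$-dimensional torus $T/(\text{finite})$ acting with stable points), $R$ is a $4$-dimensional graded domain; one expects its Hilbert function to be that of a cubic hypersurface in $\mathbb{P}^4$, so the degree-$2$ piece should have dimension $\binom{6}{2}=15$ rather than the full $\binom{5+1}{2}=15$ — wait, these coincide, so I must instead count standard monomials directly.

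The cleaner route I would take: enumerate the standard monomials of degree $2$ in $R_2 = H^0(G_{2,6},\mathcal{L}(6\omega_2))^T$, i.e. the $T$-invariant standard Young tableaux with $2$ rows and $6$ columns whose column sums (as a multiset of column-weights) are constant equal to $(3,3,3,3,3,3)$ after accounting for the $\mathcal{L}(3\omega_2)^{\otimes 2}$ grading — concretely, semistandard tableaux of shape $2\times 6$ on $\{1,\dots,6\}$ in which each of $1,\dots,6$ appears exactly twice. Having listed these (there are finitely many and the count is a short combinatorial exercise), I would show that the images in $R_2$ of the $15$ monomials $x_ix_j$, $1\le i\le j\le 5$, are linearly independent by checking that a purported linear relation $\sum c_{ij} X_iX_j = 0$ forces all $c_{ij}=0$. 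The mechanism is the straightening law: each $X_iX_j$, when written in the standard monomial basis of $R_2$, has a distinguished ``leading'' standard tableau, and I would argue (or simply verify on the explicit list) that the $15$ products have $15$ distinct leading terms — equivalently, that the relations of \cref{lemma4.1} and \cref{lemma4.2} never collapse two of the $X_iX_j$ to the same element and never make a nontrivial combination vanish. Concretely one can use the partial straightenings already computed: $X_1X_4$, $X_3X_4$ are the only non-standard products, and their expansions (Lemma \ref{lemma4.1}) introduce the genuinely new basis elements $Y_1$ and $Y_2$ with coefficient $1$, which cannot be cancelled by the remaining (already standard and pairwise distinct) products $X_iX_j$.

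I expect the main obstacle to be the bookkeeping in the linear-independence argument: one must be sure that the straightened forms of $X_1X_4$ and $X_3X_4$ do not conspire with the standard products to produce a relation, and that among the thirteen products which are already standard monomials no two coincide. I would handle this by exhibiting, for each of the fifteen products $X_iX_j$, one Plücker monomial (a specific product of six Plücker coordinates) that occurs in it with nonzero coefficient and in no other product after straightening; the relations in \cref{lemma4.1} show that $Y_1$ detects any occurrence of $X_1X_4$ and $Y_2$ detects any occurrence of $X_3X_4$, while the thirteen standard products are manifestly distinct monomials in the Plücker coordinates. This yields that the matrix expressing the fifteen products in terms of the standard monomial basis of $R_2$ has rank $15$, hence there is no nonzero quadratic relation, completing the proof.
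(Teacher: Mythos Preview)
Your proposal is correct and, once past the initial detour on Hilbert functions, lands on exactly the paper's argument: the thirteen products $X_iX_j$ with $(i,j)\neq(1,4),(3,4)$ are already distinct standard monomials, while the straightenings of $X_1X_4$ and $X_3X_4$ from \cref{lemma4.1} contribute $Y_1$ and $Y_2$ with coefficient $\pm 1$, so linear independence of standard monomials kills any putative quadratic relation coefficient by coefficient. The paper's write-up is just a more streamlined version of the same idea, first reading off $c_{1,4}=c_{3,4}=0$ from the $Y_1,Y_2$ coefficients and then concluding.
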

\begin{proof}
Since $\{X_i: 1\le i\le 5\}$ are standard monomials of degree one, there is no linear relation among $X_i$'s in $R.$

We next prove that there is no quadratic relation among $X_i$'s in R. Suppose there is a quadratic relation $\sum\limits_{i=1}^{5}c_iX_i^{2} +\sum\limits_{1\le i<j\le 5}c_{i,j}X_{i}X_{j}=0.$ 

$\sum\limits_{i=1}^{5}c_iX_i^{2} +
\sum\limits_{\substack{1\le i<j\le 5\\(i,j)\neq (1,4),(3,4)}}c_{i,j}X_{i}X_{j}+ c_{1,4}X_1X_4 +c_{3,4}X_3X_4=0.$

By using \cref{lemma4.1} we get   
\begin{center}
$\sum\limits_{i=1}^{5}c_iX_i^{2}+\sum\limits_{\substack{1\le i<j\le 5\\(i,j)\neq (1,4),(3,4)}}c_{i,j}X_{i}X_{j}+ c_{1,4}(Y_1- X_2X_5+ X_4X_5- X_4^{2} +X_2X_4)+ c_{3,4}(X_2X_5-Y_2)=0$ 
\end{center}
Since standard monomials of degree 2 are linearly independent, the coefficients  of $Y_1$ and $Y_2$ are zero i.e., $c_{1,4}=0,c_{3,4}=0.$

Thus from the above 
\begin{center}
$\sum\limits_{i=1}^{5}c_iX_i^{2}+\sum\limits_{\substack{1\le i<j\le 5\\(i,j)\neq (1,4),(3,4)}}c_{i,j}X_{i}X_{j}=0.$ 
\end{center}
Since $X_iX_j$'s are standard monomials of degree $2$ for $(i,j)\neq (1,4),(3,4),$  $c_{i}=0$ for $1\le i< j\le 5$ and $c_{i,j}=0$ for all $1\le i<j\le 5$ such that $(i,j)\neq (1,4),(3,4).$ Therefore, there is no quadratic relation among $X_i$'s in $R.$

\end{proof}

\begin{lemma}\label{lemma4.4}
$F(X_{1},X_2,X_3,X_4,X_5)=0$ in $R,$ where   $F(x_{1},x_2,x_3,x_4,x_5)=x_3x_4^{2}-x_{1}x_{2}x_{5}+x_1x_3x_4-x_2x_3x_4+ x_2x_3x_5-x_3x_4x_5 \in \tilde{R}.$
\end{lemma}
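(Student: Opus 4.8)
The plan is to verify the polynomial identity $F(X_1,X_2,X_3,X_4,X_5)=0$ in $R$ by reducing it to the two basic relations already established in \cref{lemma4.1}, rather than expanding everything from scratch in the Pl\"ucker coordinates. Concretely, $F(x_1,\dots,x_5)=x_3x_4^2-x_1x_2x_5+x_1x_3x_4-x_2x_3x_4+x_2x_3x_5-x_3x_4x_5$ should be regrouped as a combination of the cubic consequences of \cref{lemma4.1}(2), namely $X_3X_4=X_2X_5-Y_2$, multiplied through by various $X_i$. So the first step is to write
\[
F = X_3X_4\bigl(X_4 + X_1 - X_2 - X_5\bigr) - X_1X_2X_5 + X_2X_3X_5,
\]
and then substitute $X_3X_4 = X_2X_5 - Y_2$ (valid in $R$ by \cref{lemma4.1}(2)) into the first term.

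Carrying out that substitution, the first term becomes $(X_2X_5-Y_2)(X_4+X_1-X_2-X_5)$, so
\[
F = X_2X_4X_5 + X_1X_2X_5 - X_2^2X_5 - X_2X_5^2 - Y_2X_4 - Y_2X_1 + Y_2X_2 + Y_2X_5 - X_1X_2X_5 + X_2X_3X_5.
\]
The $X_1X_2X_5$ terms cancel, leaving
\[
F = X_2X_4X_5 - X_2^2X_5 - X_2X_5^2 + X_2X_3X_5 - Y_2X_4 - Y_2X_1 + Y_2X_2 + Y_2X_5.
\]
Now I would invoke the already-proven degree-three relations of \cref{lemma4.2}: parts (1), (3), (4) give $X_2X_4X_5 - W_1 = X_3X_4^2$, $X_2X_3X_4 = X_2^2X_5 - W_2$, $X_3X_4X_5 = X_2X_5^2 - W_3$, and part (2) controls $X_1X_3X_4$. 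Using $W_1 = X_4Y_2$, $W_2 = X_2Y_2$, $W_3 = X_5Y_2$ (the definitions given just before \cref{lemma4.1}), each $Y_2X_i$ term is exactly one of the $W_i$, and the remaining quartic expressions collapse: for instance $X_2X_4X_5 - Y_2X_4 = X_2X_4X_5 - W_1 = X_3X_4^2$, and $X_2^2X_5 + Y_2X_2 = X_2^2X_5 + W_2$... wait, signs — here one must be careful to match $X_2^2X_5 - W_2 = X_2X_3X_4$ and $X_2X_5^2 - W_3 = X_3X_4X_5$. After these substitutions $F$ reduces to a combination of $X_3X_4^2$, $X_2X_3X_4$, $X_3X_4X_5$, $X_1X_3X_4$ (via part (2) of \cref{lemma4.2}) together with the surviving $Y_2X_1$ term, and one checks this combination is identically zero, which establishes $F = 0$ in $R$.

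The main obstacle — and the only real work — is the sign bookkeeping: the relations in \cref{lemma4.1,lemma4.2} have several minus signs and the $W_i$ appear with specific signs, so the regrouping of $F$ into $X_3X_4(\cdots) - X_1X_2X_5 + X_2X_3X_5$ and the subsequent cancellations must be done with care. An alternative, more computational route that avoids this is to expand $F(X_1,\dots,X_5)$ directly in Pl\"ucker coordinates using the quadratic Pl\"ucker relations for $G_{2,6}$ and the straightening law, exactly in the style of the proof of \cref{lemma4.1}, and confirm that all monomials cancel; this is guaranteed to work but is lengthier. Either way the result is purely an algebraic identity in $R$, requiring no new geometric input beyond \cref{lemma4.1,lemma4.2}.
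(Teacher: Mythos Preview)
Your approach is correct and ultimately equivalent to the paper's, just run in reverse. The paper starts from \cref{lemma4.2}(2), which is the one genuinely nontrivial cubic relation (its proof contains the explicit straightening of $X_1Y_2$), and then substitutes $W_1,W_2,W_3$ via parts (1), (3), (4) to arrive at $F=0$. You instead factor $F=X_3X_4(X_4+X_1-X_2-X_5)-X_1X_2X_5+X_2X_3X_5$, substitute $X_3X_4=X_2X_5-Y_2$, and then regroup using the $W_i$.

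One caution about your write-up: parts (1), (3), (4) of \cref{lemma4.2} are literally $X_4,X_2,X_5$ times \cref{lemma4.1}(2), so using them to ``simplify'' after you have already substituted $X_3X_4=X_2X_5-Y_2$ is exactly undoing that substitution. Concretely, after your step you reach
\[
F = X_2X_4X_5 - X_2^2X_5 - X_2X_5^2 + X_2X_3X_5 - W_1 + W_2 + W_3 - X_1Y_2,
\]
and the \emph{only} way to finish is to recognize that the first seven terms are precisely the expression for $X_1Y_2$ computed in the proof of \cref{lemma4.2}(2) (equivalently, combine \cref{lemma4.2}(2) with $X_1\cdot$\cref{lemma4.1}(2) to extract $X_1Y_2$). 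If instead you replace $X_1Y_2$ by $X_1X_2X_5-X_1X_3X_4$ using \cref{lemma4.1}(2) alone, you get $F=F$, a tautology. So your ``one checks this combination is identically zero'' hides exactly the same Pl\"ucker computation the paper isolates as \cref{lemma4.2}(2); make sure that step is explicit.
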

\begin{proof}
Recall that by \cref{lemma4.2} we have the following:

$X_1X_3X_4=X_1X_2X_5-X_2X_3X_5+X_2^2X_5-W_2+X_2X_5^2-W_3-X_2X_4X_5+W_1.$ $\hspace{1cm} (2.8.1)$

$X_4^2X_3=X_2X_4X_5-W_1.$

$X_2X_3X_4=X_2^2X_5-W_2.$

$X_3X_4X_5=X_2X_5^2-W_3.$

Substituting  $W_1, W_2,$ and $W_3$ in $(2.8.1)$ we get

$X_1X_3X_4=X_1X_2X_5-X_2X_3X_5+X_2^2X_5+X_2X_3X_4-X_2^2X_5+X_2X_5^2+X_3X_4X_5-X_2X_5^2-X_2X_4X_5-X_4^2X_3+X_2X_4X_5.$

Therefore, $X_3X_4^2-X_1X_2X_5+X_1X_3X_4-X_2X_3X_4+X_2X_3X_5-X_3X_4X_5=0.$

\end{proof}

\begin{corollary}\label{cor4.5}
$F(x_{1},x_2,x_3,x_4,x_5)=x_3x_4^{2}-x_{1}x_{2}x_{5}+x_1x_3x_4-x_2x_3x_4+ x_2x_3x_5-x_3x_4x_5$ is an irreducible polynomial in $\tilde{R}.$
\end{corollary}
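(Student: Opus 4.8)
The plan is to show that $F$ is irreducible by a direct argument on its shape as a polynomial in the five variables $x_1,\dots,x_5$. First I would observe that $F$ is homogeneous of degree $3$ and, crucially, that every monomial of $F$ is divisible by either $x_3$ or by $x_1 x_2 x_5$; in fact writing $F = x_3\bigl(x_4^2 + x_1 x_4 - x_2 x_4 + x_2 x_5 - x_4 x_5\bigr) - x_1 x_2 x_5$, we see $F = x_3 \cdot G - x_1 x_2 x_5$ where $G = x_4^2 + x_1 x_4 - x_2 x_4 + x_2 x_5 - x_4 x_5$ is a quadratic polynomial not involving $x_3$. Since $F$ has degree $3$, any nontrivial factorization must be as a product of a linear form $L$ and a quadratic form $Q$.

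Next I would rule this out by a degree-in-$x_3$ argument. As a polynomial in $x_3$ over $\mathbb{C}[x_1,x_2,x_4,x_5]$, $F$ is linear in $x_3$ with leading coefficient $G$ and constant term $-x_1 x_2 x_5$. If $F = L Q$, then exactly one of $L, Q$ has positive degree in $x_3$ (degree one, since $F$ is linear in $x_3$), and the other is independent of $x_3$. If $L$ is the $x_3$-dependent factor, then $L = a x_3 + \ell$ for a scalar $a \ne 0$ and $\ell \in \mathbb{C}[x_1,x_2,x_4,x_5]$ linear, and $Q \in \mathbb{C}[x_1,x_2,x_4,x_5]$; comparing $x_3$-coefficients gives $G = a Q$, so $G$ would be a scalar multiple of the quadratic $Q$, and the $x_3$-free part gives $\ell Q = -x_1 x_2 x_5$, i.e. $Q$ divides $x_1 x_2 x_5$. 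But $G = x_4^2 + \cdots$ contains $x_4^2$ and hence is not a divisor of $x_1 x_2 x_5$ (the latter is squarefree and involves neither $x_3$ nor $x_4$), a contradiction. If instead $Q$ is the $x_3$-dependent factor and $L \in \mathbb{C}[x_1,x_2,x_4,x_5]$, then writing $Q = x_3 q + q_0$ with $q$ linear and $q_0$ quadratic in the remaining variables, comparison of $x_3$-coefficients gives $L q = G$ and $L q_0 = -x_1 x_2 x_5$, so $L$ divides both $G$ and $x_1 x_2 x_5$; again since $G$ contains $x_4^2$ and $\gcd$ considerations force any common divisor of $G$ and the squarefree monomial $x_1x_2x_5$ to be a nonzero scalar, $L$ is a unit and the factorization is trivial.

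The remaining case is that $F$ is independent of $x_3$ after factoring — but that is impossible since $F$ genuinely involves $x_3$ — or that $F$ factors with both factors $x_3$-free, which is likewise impossible since $F$ is not $x_3$-free. Hence no nontrivial factorization exists and $F$ is irreducible. The main obstacle, such as it is, is the bookkeeping of the two subcases above and verifying carefully that $G$ (which equals $x_4(x_4 + x_1 - x_2 - x_5) + x_2 x_5$) shares no nonconstant common factor with $x_1 x_2 x_5$; this is immediate because $x_1 x_2 x_5$ factors into the distinct primes $x_1, x_2, x_5$, none of which divides $G$ (setting $x_1=0$, $x_2=0$, or $x_5=0$ in $G$ leaves a nonzero polynomial, as one checks directly — e.g. $G|_{x_1=0} = x_4^2 - x_2 x_4 + x_2 x_5 - x_4 x_5 \ne 0$). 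I would present this as a short, self-contained argument immediately following the statement of \cref{cor4.5}.
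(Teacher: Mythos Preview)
Your proof is correct but takes a genuinely different route from the paper. The paper argues indirectly: assuming $F = F_1 F_2$ with $F_1, F_2$ non-units, it invokes Lemma~\ref{lemma4.4} (which gives $F(X_1,\dots,X_5)=0$ in the invariant ring $R$) together with the fact that $R$ is an integral domain to conclude that one of $F_1(X_1,\dots,X_5)$ or $F_2(X_1,\dots,X_5)$ vanishes in $R$, yielding a linear or quadratic relation among the $X_i$ and contradicting Lemma~\ref{lemma4.3}. Your argument, by contrast, is purely internal to the polynomial ring $\tilde R$: you exploit that $F$ is linear in $x_3$ with $x_3$-coefficient $G$ and constant term $-x_1x_2x_5$, and rule out a linear-times-quadratic factorization by checking that $G$ and $x_1x_2x_5$ share no nonconstant factor. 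Your approach is more elementary and entirely self-contained, requiring neither Lemma~\ref{lemma4.3} nor Lemma~\ref{lemma4.4}; the paper's approach is shorter given those lemmas and ties the irreducibility of $F$ directly to the structure of $R$, which is perhaps more illuminating in context since the next step (Lemma~\ref{lemma4.6}) compares $\tilde R/I$ with $R$. One minor point of phrasing: in your second case you conclude ``$L$ is a unit and the factorization is trivial,'' but since $L$ was assumed to be a genuine linear form this is really a contradiction, not a trivial factorization; the logic is fine, just tighten the wording.
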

\begin{proof}
	
Suppose $F(x_{1},x_2,x_3,x_4,x_5)$ is not an irreducible element of $\tilde{R}.$ Then there exist two non unit elements
$F_1(x_{1},x_2,x_3,x_4,x_5), F_2(x_{1},x_2,x_3,x_4,x_5)$ in $\tilde{R}$ such that $F(x_{1},x_2,x_3,x_4,x_5)=F_1(x_{1},x_2,x_3,x_4,x_5)F_2(x_{1},x_2,x_3,x_4,x_5).$ 
	
By Lemma \ref{lemma4.4}, $F(X_{1},X_2,X_3,X_4,X_5)=0.$ Since $R$ is an integral domain and \linebreak$F_1(X_{1},X_2,X_3,X_4,X_5)F_2(X_{1},X_2,X_3,X_4,X_5)=0$, there is either a linear relation or a quadratic relation among $X_{i}$'s in $R.$ This contradicts the Lemma \ref{lemma4.3}. 
\end{proof}
Let $I=(x_3x_4^{2}-x_{1}x_{2}x_{5}+x_1x_3x_4-x_2x_3x_4+ x_2x_3x_5-x_3x_4x_5).$ Note that since  $I$ is generated by irreducible element, and $\tilde{R}$ is UFD, $I$ is prime ideal in $\tilde{R}.$ Then $\tilde{R}/I$ is an integral domain. Let $\pi: \tilde{R}\longrightarrow \tilde{R}/I$ be the natural surjective homomorphism of $\mathbb{C}$-algebras. Let $\varphi: \tilde{R}\longrightarrow R$ be the $\mathbb{C}$-algebra homomorphism defined by $x_{i}\mapsto X_i.$ By Lemma \ref{lemma4.4}, $\varphi$ induces  $\mathbb{C}$-algebra homomorphism $\tilde{\varphi}:\tilde{R}/I\longrightarrow R.$ 

Note that by \cref{lemma3.1}, $(G/P_{\hat{\alpha_{2}}})^{s}_T(\mathcal{L}(3\omega_{2}))\neq \emptyset$. Therefore, Krull dimension of $R$ is $4.$ On the other hand, Krull dimension of $\tilde{R}/I$ is $4.$
\begin{lemma}\label{lemma4.6}
$\tilde{\varphi}:\tilde{R}/I\longrightarrow R$ is an isomorphism of $\mathbb{C}$-algebras.
\end{lemma}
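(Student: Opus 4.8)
The plan is to show that the surjective $\mathbb{C}$-algebra map $\tilde{\varphi}\colon \tilde{R}/I \to R$ is an isomorphism by comparing dimensions and using that $\tilde{R}/I$ is a domain. First I would argue surjectivity: by \cref{cor3.3} the ring $R$ is generated as a $\mathbb{C}$-algebra by the degree-one elements $X_1,\dots,X_5$, and since $\varphi$ sends $x_i$ to $X_i$, the map $\varphi$ is surjective; hence so is the induced map $\tilde{\varphi}$. Thus $\tilde{\varphi}$ is a surjection from the integral domain $\tilde{R}/I$ (which is a domain because $I$ is prime, by \cref{cor4.5} and the fact that $\tilde{R}$ is a UFD) onto the integral domain $R$.

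Next I would pin down Krull dimensions. On the source side, $\tilde{R}/I$ is the quotient of the five-variable polynomial ring by a nonzero prime, so it has Krull dimension $4$. On the target side, $R = \bigoplus_{d\ge 0} H^0(G_{2,6},\mathcal{L}(3\omega_2)^{\otimes d})^T$ is the homogeneous coordinate ring of $X = T\backslash\backslash (G_{2,6})^{ss}_T(\mathcal{L}(3\omega_2)) = \operatorname{Proj}(R)$; since $\dim G_{2,6} = 8$ and, by \cref{lemma3.1}, $(G_{2,6})^{s}_T(\mathcal{L}(3\omega_2))\ne\emptyset$, the quotient $X$ has dimension $8 - \dim T = 8 - 5 = 3$, so $R$ has Krull dimension $4$. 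Therefore $\tilde{\varphi}$ is a surjective homomorphism between two integral domains of the same finite Krull dimension $4$.

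The key algebraic point is then: a surjective ring homomorphism $f\colon A \twoheadrightarrow B$ between integral domains with $\dim A = \dim B < \infty$ (both finitely generated $\mathbb{C}$-algebras) must be injective, because $\ker f$ is a prime ideal and $\dim(A/\ker f) = \dim B = \dim A$ forces $\ker f = 0$ (a nonzero prime in a domain that is a finitely generated algebra over a field strictly lowers the dimension). Applying this with $A = \tilde{R}/I$, $B = R$, and $f = \tilde{\varphi}$ gives $\ker\tilde{\varphi} = 0$, so $\tilde{\varphi}$ is an isomorphism. I would cite the standard dimension theory (e.g. from Eisenbud or Hartshorne) for the fact that a nonzero prime ideal in a finitely generated domain over a field cuts the dimension down.

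The main obstacle, and the place where the earlier lemmas do the real work, is establishing the exact value $\dim R = 4$: the inequality $\dim R \le 4$ is automatic from surjectivity of $\tilde{\varphi}$ (since $\dim \tilde{R}/I = 4$), but the reverse inequality needs the nonemptiness of the stable locus from \cref{lemma3.1}, which guarantees that the GIT quotient is genuinely $3$-dimensional rather than something smaller. Equivalently, one must be sure that $I$ is exactly the ideal of relations and not properly contained in $\ker\tilde{\varphi}$; the dimension count is precisely what rules this out, so the whole argument hinges on matching the two dimensions. Everything else—surjectivity, primeness of $I$, domain-ness—has already been set up in \cref{cor3.3}, \cref{lemma4.4}, and \cref{cor4.5}.
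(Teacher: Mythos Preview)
Your proposal is correct and follows essentially the same approach as the paper: establish surjectivity of $\tilde{\varphi}$, compute that both $\tilde{R}/I$ and $R$ have Krull dimension $4$ (using \cref{lemma3.1} for the latter), and conclude that the prime ideal $\ker\tilde{\varphi}$ has height zero via the standard dimension formula (the paper cites \cite[Theorem 1.8A(b)]{R}). The paper's proof is more terse but the logical skeleton is identical.
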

\begin{proof}
It is clear that $\tilde{\varphi}$ is a surjective morphism of $\mathbb{C}$-algebras. Since $R$ is an integral domain, $Ker(\tilde{\varphi})$ is a prime ideal of $\tilde{R}/I.$ 
%Then $\pi^{-1}(Ker(\tilde{\varphi}))$ is a prime ideal of $S$ strictly containing $I.$ Therefore, $ht(\pi^{-1}(Ker(\tilde{\varphi})))\ge 2.$ 
Therefore, by \cite[Theorem 1.8A.(b), p.6]{R}, we have $dim((\tilde{R}/I)/Ker(\tilde{\varphi}))$ + $ht(Ker(\tilde{\varphi}))=dim(\tilde{R}/I).$ On the other hand, we have  $dim((\tilde{R}/I)/Ker(\tilde{\varphi}))=dim(R)=dim(\tilde{R}/I).$ Hence, $ht(Ker(\tilde{\varphi}))=0,$ i.e. $Ker(\tilde{\varphi})=0.$ Therefore, $\tilde{\varphi}$ is an isomorphism of $\mathbb{C}$-algebras. 
\end{proof}
\begin{corollary}
$\tilde{\varphi}:\tilde{R}/I\longrightarrow R$ is an isomorphism as graded $\mathbb{C}$-algebras.
\end{corollary}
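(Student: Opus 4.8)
The plan is to upgrade the $\mathbb{C}$-algebra isomorphism $\tilde{\varphi}\colon \tilde{R}/I \longrightarrow R$ of \cref{lemma4.6} to an isomorphism of \emph{graded} $\mathbb{C}$-algebras. Since $R = \bigoplus_{d\ge 0} R_d$ is graded with $R_d = H^0(G_{2,6},\mathcal{L}^{\otimes d}(3\omega_2))^T$, and $\tilde{R}=\mathbb{C}[x_1,\dots,x_5]$ carries the standard grading with each $x_i$ in degree $1$, it suffices to check that the defining ideal $I = (F)$ is a homogeneous ideal of $\tilde{R}$ and that $\varphi$ (hence $\tilde{\varphi}$) is degree-preserving.

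First I would observe that each generator $X_i$ of $R$ is, by construction, an element of $R_1 = H^0(G_{2,6},\mathcal{L}(3\omega_2))^T$ — indeed $X_1,\dots,X_5$ were exhibited as a basis of $R_1$ just before \cref{lemma4.1}. Therefore the $\mathbb{C}$-algebra homomorphism $\varphi\colon \tilde{R}\to R$, $x_i\mapsto X_i$, sends the degree-$1$ piece $\tilde{R}_1$ into $R_1$, and since it is a ring homomorphism it sends $\tilde{R}_d$ into $R_d$ for every $d$; that is, $\varphi$ is a homomorphism of graded $\mathbb{C}$-algebras. Next I would note that $F(x_1,x_2,x_3,x_4,x_5) = x_3x_4^2 - x_1x_2x_5 + x_1x_3x_4 - x_2x_3x_4 + x_2x_3x_5 - x_3x_4x_5$ is a homogeneous polynomial of degree $3$ (every monomial has total degree $3$), so $I=(F)$ is a homogeneous ideal and the quotient $\tilde{R}/I$ inherits a grading from $\tilde{R}$. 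Consequently the quotient map $\pi\colon \tilde{R}\to \tilde{R}/I$ is graded, and $\varphi = \tilde{\varphi}\circ\pi$ forces $\tilde{\varphi}$ to respect the gradings.

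Finally I would invoke \cref{lemma4.6}: $\tilde{\varphi}$ is already known to be an isomorphism of $\mathbb{C}$-algebras, and we have just checked it is degree-preserving; a degree-preserving algebra isomorphism between graded algebras is automatically an isomorphism of graded algebras (its inverse is then forced to be graded as well). This completes the proof. I do not anticipate any real obstacle here: the only content is the bookkeeping observation that $X_i\in R_1$ and that $F$ is homogeneous of degree $3$, both of which are immediate from the explicit formulas given earlier in this section; the substantive work — surjectivity, injectivity via the Krull dimension count — was already done in \cref{lemma4.4} and \cref{lemma4.6}.
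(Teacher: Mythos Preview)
Your proposal is correct and is exactly the routine verification the paper leaves implicit: the corollary is stated without proof, since it follows immediately from \cref{lemma4.6} together with the observations that each $X_i\in R_1$ and that $F$ is homogeneous of degree~$3$, so $I$ is homogeneous and $\tilde\varphi$ is degree-preserving.
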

\begin{corollary}
$X$ is a complete intersection variety. 
\end{corollary}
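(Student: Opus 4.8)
The plan is to combine the previous corollary (that $\tilde{\varphi}\colon \tilde{R}/I \to R$ is an isomorphism of graded $\mathbb{C}$-algebras) with the description of $X$ as a Proj. Recall that $X = T\backslash\backslash (G_{2,6})^{ss}_T(\mathcal{L}(3\omega_2)) = \mathrm{Proj}(R)$, so by the previous corollary $X \cong \mathrm{Proj}(\tilde{R}/I)$ as a polarized variety, where $\tilde{R} = \mathbb{C}[x_1,\dots,x_5]$ is the polynomial ring in five variables and $I = (F)$ is the principal ideal generated by the single polynomial $F(x_1,\dots,x_5) = x_3x_4^2 - x_1x_2x_5 + x_1x_3x_4 - x_2x_3x_4 + x_2x_3x_5 - x_3x_4x_5$. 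Hence $X$ is (isomorphic to) the projective hypersurface in $\mathbb{P}^4$ cut out by the single homogeneous polynomial $F$.

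The remaining point is simply to observe that this presentation exhibits $X$ as a complete intersection. First I would recall the definition: a closed subvariety $Y \subseteq \mathbb{P}^N$ of codimension $c$ is a (set-theoretic, and here scheme-theoretic) complete intersection if its homogeneous ideal is generated by exactly $c$ homogeneous polynomials. In our situation $X \subseteq \mathbb{P}^4$, and by \cref{cor4.5} together with \cref{lemma4.4} the homogeneous ideal of $X$ under the embedding $X = \mathrm{Proj}(\tilde R/I) \hookrightarrow \mathbb{P}^4$ is exactly $I = (F)$, a principal ideal generated by one homogeneous cubic. Since $F$ is irreducible (\cref{cor4.5}), $\tilde R/I$ is an integral domain of Krull dimension $4$, so $X$ has dimension $3$, i.e. codimension $1$ in $\mathbb{P}^4$. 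Thus $X$ is defined by a number of equations equal to its codimension, which is precisely the condition for being a complete intersection. I would phrase this as: $X$ is a cubic hypersurface in $\mathbb{P}^4$, hence a complete intersection.

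I do not expect any genuine obstacle here; the statement is an immediate consequence of the preceding results. The only thing requiring a word of care is to make sure the ideal $I=(F)$ really is the full homogeneous (saturated) ideal of $X$ in its Pl\"ucker-type embedding into $\mathbb{P}^4$ determined by the generators $X_1,\dots,X_5$ of $R_1$ — but this is exactly the content of the isomorphism $\tilde\varphi\colon \tilde R/I \xrightarrow{\ \sim\ } R$ of graded $\mathbb{C}$-algebras, since $R$ is by construction the homogeneous coordinate ring of $X$ for this polarization. Once that identification is in hand, the conclusion "$X$ is a complete intersection variety" follows formally from the definition, and no computation is needed beyond citing \cref{cor4.5} and the graded isomorphism.
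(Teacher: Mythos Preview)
Your proposal is correct and matches the paper's approach: the paper states this corollary without proof, treating it as an immediate consequence of the graded isomorphism $\tilde{\varphi}:\tilde{R}/I\to R$ and the fact that $I=(F)$ is principal, so that $X\cong\mathrm{Proj}(\tilde{R}/I)$ is a hypersurface in $\mathbb{P}^4$. Your unpacking of this---that a codimension-one subvariety cut out by a single irreducible homogeneous polynomial is by definition a complete intersection---is exactly the intended reasoning.
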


\section{Torus quotient of $X(6,8)$ in $G_{2,8}$}\label{section5}

In this section, our aim is to study the GIT quotient of $X(6,8)$ in $G_{2,8}.$

We consider the Schubert variety $X(6,8)$ in the Grassmannian $G_{2,8}$. Let $R=\bigoplus\limits_{d\ge 0} R_{d},$ where  $R_{d}=H^0(X(6,8), \mathcal{L}^{\otimes d}(4\omega_{2}))^{T}.$ We note that $X=Proj(R)$ and $R_{d}$'s   are finite dimensional vector space. Let  $X_{1}=p_{12}p_{34}p_{57}p_{68},$ $X_{2} =p_{12}p_{35}p_{47}p_{68},$ $X_{3}=p_{13}p_{25}p_{47}p_{68},$ $X_{4}=p_{13}p_{24}p_{57}p_{68}$, $X_{5}=p_{14}p_{25}p_{37}p_{68}$, $X_{6}=p_{14}p_{26}p_{37}p_{58}$, $X_{7}=p_{13}p_{26}p_{47}p_{58}$, $X_{8}=p_{12}p_{36}p_{47}p_{58}$ and $X_{9}=p_{15}p_{26}p_{37}p_{48}$. Then the set $\{X_{1}, X_{2}, X_{3}, X_{4}, X_{5}, X_{6}, X_{7}, X_{8}, X_{9}\}$ forms a basis of $R_{1}.$ Let $Y_{1}=p_{12}p_{13}p_{23}p_{45}p_{47}p_{57}p^2_{68}$, $Y_{2}=p_{12}p_{14}p_{24}p_{35}p_{37}p_{57}p^2_{68}$, $Y_{3}=p_{12}p_{14}p_{24}p_{36}p_{37}p_{57}p_{58}p_{68}$, $Y_{4}=p_{12}p_{13}p_{23}p_{46}p_{47}p_{57}p_{58}p_{68}$, 
$Y_{5}=p_{12}p_{15}p_{25}p_{36}p_{37}p_{47}p_{48}p_{68}$.

\begin{lemma}\label{lemma5.1}
We have the following relations in $R_{2}.$
\begin{itemize}
\item [(1)] $X_{1}X_{3}=X_{2}X_{4}-Y_{1}.$

\item [(2)] $X_{1}X_{5}=Y_{2}-X_{2}X_{4}+X_{1}X_{2}+X_{1}X_{4}-X_{1}^{2}.$

\item [(3)] $X_{1}X_{6}=Y_{3}-X_{4}X_{8}+X_{1}X_{8}+X_{1}X_{4}-X_{1}^{2}.$

\item [(4)] $X_{1}X_{7}=X_{4}X_{8}-Y_{4}.$

\item [(5)] $X_{1}X_{9}=X_{5}X_{8}-X_{3}X_{8}+X_{1}X_{8}+X_{2}X_{4}-X_{1}X_{2}-Y_{1}.$

\item [(6)] $X_{2}X_{6}=X_{5}X_{8}-X_{4}X_{8}+X_{1}X_{8}+X_{2}X_{4}-X_{1}X_{2}.$

\item [(7)] $X_{2}X_{7}=X_{3}X_{8}-Y_{4}+Y_{1}.$

\item [(8)] $X_{2}X_{9}=Y_{5}-X_{3}X_{8}+X_{2}X_{8}+X_{2}X_{3}-X_{2}^{2}.$

\item [(9)] $X_{4}X_{9}=X_{3}X_{6}-X_{3}X_{8}+X_{4}X_{8}-Y_{1}.$
\end{itemize}
\end{lemma}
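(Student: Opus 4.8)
The plan is to prove each of the nine identities in \cref{lemma5.1} by the same direct method used for the Grassmannian $G_{2,6}$ in \cref{lemma4.1}: namely, expand the product of the two relevant degree-one standard monomials $X_iX_j$, which is a product of eight Pl\"ucker coordinates on $X(6,8)$, and then apply the quadratic Pl\"ucker relations $p_{a,d}p_{b,c}-p_{a,c}p_{b,d}+p_{a,b}p_{c,d}=0$ (with $a<b<c<d$) to straighten the non-standard pairs one at a time. The key preliminary observation is that each $X_iX_j$ on the left is a \emph{non-standard} monomial of degree two, while the monomials $X_k^2$, $X_kX_l$ and $Y_m$ appearing on the right are precisely the standard monomials of degree two that arise after straightening; since the standard monomials of degree $2$ on $X(6,8)$ form a basis of $H^0(X(6,8),\mathcal{L}(4\omega_2)^{\otimes 2})$ by standard monomial theory (\cite[Theorem 4.5.0.5]{LS}), once the straightening terminates the coefficients are forced and the identity is established.

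Concretely, for each identity I would proceed as follows. First I identify, inside the eight-coordinate product $X_iX_j$, the unique non-comparable pair of Pl\"ucker indices (for instance in (1), $X_1X_3 = p_{12}p_{13}p_{25}p_{34}p_{47}p_{57}p_{68}^2$ and the offending pair is $p_{25}p_{34}$ with $2<3<4<5$). Applying the Pl\"ucker relation $p_{25}p_{34}=p_{24}p_{35}-p_{23}p_{45}$ replaces $X_1X_3$ by a difference of two monomials; one of them is standard (here $p_{12}p_{13}p_{24}p_{35}p_{47}p_{57}p_{68}^2$, which up to reindexing equals $X_2X_4$ — one checks directly $X_2X_4 = p_{12}p_{13}p_{24}p_{35}p_{47}p_{57}p_{68}^2$), the other is $Y_1$, and one checks it is already standard, giving $X_1X_3=X_2X_4-Y_1$. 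For identities (2)--(9) the straightening may need to be iterated: a first Pl\"ucker relation produces a term that is still non-standard (a new non-comparable pair appears), so one applies a second relation, then possibly a third, exactly as in the proof of \cref{lemma4.1}(1). Throughout, the bookkeeping is simplified by the fact that we are on the Schubert variety $X(6,8)$: any Pl\"ucker coordinate $p_{\underline{i}}$ with $\underline{i}\not\le (6,8)$ vanishes identically, which kills several of the straightening terms; in particular a term like $p_{78}$ (or any $p_{\underline i}$ with first index $\ge 7$) is zero on $X(6,8)$, so only the ``standard'' half of each straightening survives unless it lands among the listed $X$'s and $Y$'s.

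The execution is therefore a finite, mechanical computation: nine products, each expanded and straightened in at most two or three steps, with the vanishing conditions on $X(6,8)$ trimming the tree of terms. The only genuine care required — and the step I expect to be the main obstacle — is the consistent choice of which non-comparable pair to straighten at each stage and verifying that the recursion terminates with exactly the right combination of the nine $X_k$'s and five $Y_m$'s (and no leftover standard monomials), since a mislabelled term or an overlooked vanishing would corrupt the identity. Once all nine relations are verified, they feed (as in \cref{section4}) into the analysis of the homogeneous coordinate ring $R$ of $T\backslash\backslash X(6,8)$: they will let us eliminate $X_5,X_6,X_7,X_9$ (and hence the $Y_m$, which are themselves quadratic in the $X$'s modulo these relations) and present $R$ in terms of $X_1,X_2,X_3,X_4,X_8$ subject to the resulting relations, en route to describing $\mathrm{Proj}(R)$.
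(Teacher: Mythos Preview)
Your approach is exactly the paper's: it proves (1) by the single Pl\"ucker substitution $p_{25}p_{34}=p_{24}p_{35}-p_{23}p_{45}$ inside $X_1X_3=p_{12}p_{13}(p_{25}p_{34})p_{47}p_{57}p_{68}^2$ and then states that (2)--(9) are similar, i.e., iterated straightening of the non-comparable pairs just as you describe. Your closing speculation about eliminating $X_5,X_6,X_7,X_9$ to present $R$ in five variables is not how the paper proceeds (it keeps all nine generators and extracts five quadratic relations in \cref{lemma5.2}), but that concerns the sequel, not the proof of this lemma.
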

\begin{proof}
Proof of (1): \begin{equation*}
\begin{aligned} 
X_1X_3 & =p_{12}p_{13}(p_{25}p_{34})p_{47}p_{57}p_{68}^{2}\\
&=p_{12}p_{13}(p_{24}p_{35}-p_{23}p_{45})p_{47}p_{57}p_{68}^{2}\\
&=p_{12}p_{13}p_{24}p_{35}p_{47}p_{57}p_{68}^{2}-p_{12}p_{13}p_{23}p_{45}p_{47}p_{57}p_{68}^{2}\\
&=X_{2}X_{4}-Y_{1}.
\end{aligned}
\end{equation*}
Proof of (2),(3),(4),(5),(6),(7),(8),(9) are similar to (1).
\end{proof}

\begin{lemma}\label{lemma5.2}
We have the following relations in $R_2.$ 
\begin{itemize}
		
\item[(1)] $X_2X_6-X_5X_8+X_4X_8-X_1X_8-X_2X_4+X_1X_2=0$
		
\item[(2)] $X_1X_3-X_2X_4+X_3X_6-X_3X_8+X_4X_8-X_4X_9=0$
		
\item[(3)] $X_2X_7-X_1X_7-X_3X_6+X_4X_9=0$
		
\item[(4)] $X_3X_6-X_5X_7=0$
		
\item[(5)] $X_1X_3-X_2X_4+X_2X_6-X_3X_8+X_4X_8-X_1X_9=0$.
		
\end{itemize}
\end{lemma}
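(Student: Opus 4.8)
The plan is to prove each of the five relations in \cref{lemma5.2} by the same mechanism that produced \cref{lemma5.1}: express the relevant degree-two products of $X_i$'s as linear combinations of standard monomials (in particular, the $Y_j$'s) using the quadratic Pl\"ucker relations, and then take suitable $\mathbb{C}$-linear combinations so that all the non-standard terms $Y_1,\ldots,Y_5$ cancel. Concretely, for relation (1) I would take the identity $X_2X_6 = X_5X_8-X_4X_8+X_1X_8+X_2X_4-X_1X_2$ of \cref{lemma5.1}(6), which already contains no $Y_j$; rearranging it gives exactly (1). So (1) is immediate from \cref{lemma5.1}(6), and likewise (4) should follow from a single Pl\"ucker computation ($X_3X_6 = p_{13}p_{26}p_{37}p_{47}p_{58}p_{68}$ vs. $X_5X_7 = p_{13}p_{14}p_{25}p_{26}p_{37}p_{47}p_{58}p_{68}$ — wait, one must check degrees; more carefully $X_3X_6$ and $X_5X_7$ are both degree-two and I expect the straightening of each to yield the same standard monomial, forcing $X_3X_6=X_5X_7$).

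For the remaining three relations (2), (3), (5), the strategy is to combine several items of \cref{lemma5.1} so that the $Y_j$ terms telescope. For (3): \cref{lemma5.1}(7) gives $X_2X_7 = X_3X_8-Y_4+Y_1$, \cref{lemma5.1}(4) gives $X_1X_7 = X_4X_8-Y_4$, and \cref{lemma5.1}(9) gives $X_4X_9 = X_3X_6-X_3X_8+X_4X_8-Y_1$; then $X_2X_7-X_1X_7-X_3X_6+X_4X_9 = (X_3X_8-Y_4+Y_1)-(X_4X_8-Y_4)-X_3X_6+(X_3X_6-X_3X_8+X_4X_8-Y_1)=0$, which is precisely (3). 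For (2): use \cref{lemma5.1}(1) for $X_1X_3 = X_2X_4-Y_1$ together with \cref{lemma5.1}(9) for $X_4X_9$; then $X_1X_3-X_2X_4+X_3X_6-X_3X_8+X_4X_8-X_4X_9 = (X_2X_4-Y_1)-X_2X_4+X_3X_6-X_3X_8+X_4X_8-(X_3X_6-X_3X_8+X_4X_8-Y_1)=0$. For (5): combine \cref{lemma5.1}(1) (for $X_1X_3$), \cref{lemma5.1}(6) (for $X_2X_6$), and \cref{lemma5.1}(5) (for $X_1X_9$); one checks $X_1X_3-X_2X_4+X_2X_6-X_3X_8+X_4X_8-X_1X_9 = (X_2X_4-Y_1)-X_2X_4+(X_5X_8-X_4X_8+X_1X_8+X_2X_4-X_1X_2)-X_3X_8+X_4X_8-(X_5X_8-X_3X_8+X_1X_8+X_2X_4-X_1X_2-Y_1) = 0$.

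Thus the whole proof reduces to bookkeeping: quote the appropriate identities from \cref{lemma5.1}, add them with the correct signs, and observe that every $Y_j$ and every genuinely non-standard monomial cancels, leaving the asserted relation among the $X_i$'s. The only genuinely new computations are the two ``$Y$-free'' identities that \cref{lemma5.1} does not already hand us directly — chiefly relation (4), $X_3X_6=X_5X_7$, which requires running the straightening law on both $X_3X_6$ and $X_5X_7$ and checking that the resulting standard expansions coincide; the same care is needed if (1) is to be derived from scratch rather than quoted. Relation (4) is the one place where I expect to have to be careful about which Pl\"ucker relations to apply and in what order, since the two products involve different index patterns; the cancellation in (2), (3), (5) is then purely formal once (4) and \cref{lemma5.1} are in hand. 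I would present the proof by writing out the Pl\"ucker computation for (4) (and, if desired, a representative one such as (1) as in \cref{lemma5.1}) in full, and for (2), (3), (5) simply display the telescoping sum of the cited identities from \cref{lemma5.1}, noting in each case that the non-standard terms cancel.
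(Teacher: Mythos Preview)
Your approach is correct and matches the paper's proof almost verbatim: (1) is \cref{lemma5.1}(6) rearranged; (2), (3), (5) are exactly the telescoping combinations of items from \cref{lemma5.1} that you wrote out (the paper's proof of (5) routes through relation (1) of the present lemma, but that is your combination of \cref{lemma5.1}(1),(5),(6) in disguise). The only place you overcomplicate things is (4): no straightening is needed at all, because $X_3X_6=p_{13}p_{25}p_{47}p_{68}\cdot p_{14}p_{26}p_{37}p_{58}$ and $X_5X_7=p_{14}p_{25}p_{37}p_{68}\cdot p_{13}p_{26}p_{47}p_{58}$ are literally the same product of eight Pl\"ucker coordinates, so $X_3X_6-X_5X_7=0$ holds on the nose by commutativity.
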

\begin{proof}
Proof of (1): Follows from Lemma \ref{lemma5.1}(6).

Proof of (2): Follows from Lemma \ref{lemma5.1}(1) and Lemma \ref{lemma5.1}(9) eliminating $Y_{1}.$

Proof of (3): Follows from Lemma \ref{lemma5.1}(4), Lemma \ref{lemma5.1}(7), and Lemma \ref{lemma5.1}(9) eliminating $Y_{1}, Y_{4}.$

Proof of (4): $X_{3}X_{6}-X_{5}X_{7}=p_{13}p_{25}p_{47}p_{68}p_{14}p_{26}p_{37}p_{58}-p_{14}p_{25}p_{37}p_{68}p_{13}p_{26}p_{47}p_{58}=0.$

Proof of (5): By Lemma \ref{lemma5.1}(1), Lemma \ref{lemma5.1}(5) eliminating $Y_{1},$ we have $X_{1}X_{3}-X_{2}X_{4}-X_{1}X_{9}+X_{5}X_{8}-X_{3}X_{8}+X_{1}X_{8}+X_{2}X_{4}-X_{1}X_{2}=0.$ On the other hand,  by \cref{lemma5.2}(1), we have $X_{5}X_{8}+X_{1}X_{8}+X_{2}X_{4}-X_{1}X_{2}=X_{2}X_{6}+X_{4}X_{8}.$ Therefore,  $X_{1}X_{3}-X_{2}X_{4}+X_{2}X_{6}-X_{3}X_{8}+X_{4}X_{8}-X_{1}X_{9}=0.$ 
\end{proof}

We now use Macaulay 2 to see that the ideal $I$ generated by the above relations is a prime ideal in the ring $R$ generated by $X_1, X_2, X_3, X_4, X_5, X_6, X_7, X_8, X_9$. 

\scriptsize{
\begin{lstlisting}
i1 : R=QQ[x_1,x_2,x_3,x_4,x_5,x_6,x_7,x_8,x_9]

o1 = R

o1 : PolynomialRing

i2 :  I=ideal{x_2*x_6-x_5*x_8+x_4*x_8-x_1*x_8-x_2*x_4+x_1*x_2,x_1*x_3
-x_2*x_4+x_3*x_6-x_3*x_8+x_4*x_8-x_4*x_9,x_2*x_7-x_1*x_7-x_3*x_6+x_4*
x_9,x_3*x_6-x_5*x_7,x_1*x_3-x_2*x_4+x_2*x_6-x_3*x_8+x_4*x_8-x_1*x_9}

o2 = ideal (x x  - x x  + x x  - x x  + x x  - x x , x x  - x x    
             1 2    2 4    2 6    1 8    4 8    5 8   1 3    2 4 
             
     ---------------------------------------------------------------
             
      + x x  - x x  + x x  - x x , -x x - x x  + x x  + x x , 
         3 6    3 8    4 8    4 9    3 6   1 7    2 7    4 9   
     ---------------------------------------------------------------
      x x  - x x , x x  - x x  + x x  - x x  + x x  - x x )
       3 6    5 7   1 3    2 4    2 6    3 8    4 8    1 9

o2 : Ideal of R

i3 : isPrime I

o3 = true
\end{lstlisting}
}

\normalsize{Now we see the dimension (Krull dimension) of the quotient ring $A=R/I$ is $5$.}

\scriptsize{
\begin{lstlisting}  

i4 : A=R/I

o4 = A

o4 : QuotientRing

i5 : dim A

o5 = 5
\end{lstlisting}
}

\normalsize{We note that by \cref{cor3.4}, $R$ is a graded $\mathbb{C}$-algebra which is an integral domain and generated by $R_1$ elements $X_i,$ $1\le i\le9.$} 

Let $A=\mathbb{C}[x_{1}, x_{2}, x_{3},..., x_{9}]$ be the polynomial ring in $x_{i}$'s for $1\le i\le 9.$

Let $I$ be the ideal of $A$ generated by the above relations. Then $I$ is a prime ideal of $A.$ Therefore, $A/I$ is an integral domain. Let $\pi: A\longrightarrow A/I$ be the natural surjective homomorphism of $\mathbb{C}$-algebras. Let $\varphi: A\longrightarrow R$ be the $\mathbb{C}$-algebra homomorphism defined by $x_{i}\mapsto X_i.$ By Lemma \ref{lemma5.2}, $\varphi$ induces  $\mathbb{C}$-algebra homomorphism $\tilde{\varphi}:A/I\longrightarrow R.$ 

Note that by \cref{lemma3.1}, $X(w)^{s}_{T}(\mathcal{L}(4\omega_{2}))\neq \emptyset$. Therefore, Krull dimension of $R$ is $5.$ On the other hand, Krull dimension of $A/I$ is $5.$
\begin{lemma}
$\tilde{\varphi}:A/I\longrightarrow R$ is an isomorphism of $\mathbb{C}$-algebras.
\end{lemma}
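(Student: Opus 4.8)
The plan is to imitate the proof of \cref{lemma4.6} verbatim, since the situation is structurally identical: we have a polynomial ring $A$ in nine variables, a prime ideal $I$ (verified prime by Macaulay2), and a surjective graded $\mathbb{C}$-algebra map $\tilde\varphi\colon A/I\to R$, and we want to conclude it is an isomorphism by a dimension count. First I would note that $\tilde\varphi$ is surjective because $R$ is generated as a $\mathbb{C}$-algebra by $X_1,\dots,X_9$ (this is \cref{cor3.4}, applied to $X(6,8)$, via projective normality forcing generation in degree one). Since $R$ is an integral domain, $\ker(\tilde\varphi)$ is a prime ideal of the integral domain $A/I$, and $(A/I)/\ker(\tilde\varphi)\cong R$.

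Next I would invoke the dimension theory of finitely generated algebras over a field: by \cite[Theorem 1.8A.(b), p.6]{R}, in the domain $A/I$ we have
\[
\dim\big((A/I)/\ker(\tilde\varphi)\big)+\operatorname{ht}(\ker(\tilde\varphi))=\dim(A/I).
\]
The Macaulay2 computation gives $\dim(A/I)=5$. On the other hand $\dim\big((A/I)/\ker(\tilde\varphi)\big)=\dim R$, and $\dim R=5$ because, by \cref{lemma3.1}, $X(6,8)=X(w^{s}_{\mathrm{min}})$ in $G_{2,8}$ (the minimal dimensional Schubert variety admitting stable points, here $n=8$ so $w^s_{\min}=(6,8)$) has $X(6,8)^{s}_T(\mathcal{L}(4\omega_2))\neq\emptyset$, so the GIT quotient $\operatorname{Proj}(R)=T\backslash\backslash (X(6,8))^{ss}_T(\mathcal{L}(4\omega_2))$ has dimension equal to $\dim X(6,8)-\dim T=l(6,8)-(n-1)=(n-2+\tfrac n2-2)-(n-1)=\tfrac n2-1+0$; more directly $\dim R = \dim\operatorname{Proj}(R)+1=5$. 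Plugging in, $5+\operatorname{ht}(\ker(\tilde\varphi))=5$, so $\operatorname{ht}(\ker(\tilde\varphi))=0$; since $A/I$ is a domain, its only height-zero prime is the zero ideal, so $\ker(\tilde\varphi)=0$ and $\tilde\varphi$ is injective, hence an isomorphism (of graded $\mathbb{C}$-algebras, since $\varphi$ sends degree-one generators to degree-one elements).

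The step that needs the most care — and the place where the argument could go wrong — is the equality $\dim(A/I)=\dim R=5$. The value $\dim(A/I)=5$ rests entirely on the Macaulay2 output, so one should be confident the listed five quadrics are exactly the relations fed to the computer and that \texttt{isPrime I} and \texttt{dim A} returned \texttt{true} and \texttt{5}; I would double-check that the nine standard monomials $X_1,\dots,X_9$ are genuinely the full basis of $R_1$ (i.e. an honest enumeration of the standard monomials of degree one on $X(6,8)$, cf.\ the tableau argument analogous to \cref{lemma3.7}) so that the presentation is over the correct number of variables. The computation of $\dim R$ should be cross-checked against the general formula $\dim T\backslash\backslash X(w^s_{\min})=\tfrac n2-1$ established around \cref{lemma3.5} (here $\tfrac 82-1=3$, so $\dim\operatorname{Proj} R=3$? — this discrepancy with the Macaulay2 value $5$ must be reconciled: in fact $X(6,8)$ need not equal $X(w^s_{\min})$ for $n=8$, so one should instead compute $\dim R-1=\dim X(6,8)-\dim T$ directly, giving $\dim\operatorname{Proj}R=l(6,8)-7$, and verify this equals $4$ so that $\dim R=5$). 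Once the two dimensions are independently confirmed to agree, the rest is the formal height-zero argument exactly as in \cref{lemma4.6}, and there is no further obstacle.
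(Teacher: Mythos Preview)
Your approach is correct and is exactly what the paper does: its proof reads in full ``Proof is similar to the proof of Lemma~\ref{lemma4.6}.'' The surjectivity via \cref{cor3.4}, the primality of $\ker(\tilde\varphi)$, and the height-zero argument from \cite[Theorem~1.8A(b)]{R} are all as in \cref{lemma4.6}.

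One clarification on the point you yourself flagged: $X(6,8)$ is \emph{not} $X(w^s_{\min})$ in $G_{2,8}$; for $n=8$ one has $w^s_{\min}=(\tfrac{n}{2}+1,n)=(5,8)$, and $(6,8)$ is the Schubert variety one step above it. Nonetheless $(6,8)>(5,8)=w^s_{\min}$, so \cref{lemma3.1} still applies to give $X(6,8)^s_T(\mathcal{L}(4\omega_2))\neq\emptyset$, and then $\dim\operatorname{Proj}(R)=\dim X(6,8)-\dim T=l(6,8)-(n-1)=(6+8-3)-7=4$, hence $\dim R=5$, matching the Macaulay2 value for $\dim(A/I)$. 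Your self-correction at the end lands on exactly this computation; the detour through the $\tfrac{n}{2}-1$ formula (which pertains only to $X(w^s_{\min})$) should simply be excised.
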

\begin{proof}
Proof is similar to the proof of Lemma \ref{lemma4.6}.  
\end{proof}
\begin{corollary}
$\tilde{\varphi}:A/I\longrightarrow R$ is an isomorphism as graded $\mathbb{C}$-algebras.
\end{corollary}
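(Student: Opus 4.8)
The plan is to observe that the algebra isomorphism $\tilde{\varphi}$ produced in the previous lemma is automatically degree-preserving, and that a bijective graded morphism of graded rings is a graded isomorphism. First I would equip $A=\mathbb{C}[x_1,\ldots,x_9]$ with its standard grading, $\deg(x_i)=1$ for all $1\le i\le 9$. Next I would check that the ideal $I$ is homogeneous: each of the five generators of $I$ listed in \cref{lemma5.2} is a $\mathbb{C}$-linear combination of degree-two monomials, hence homogeneous of degree $2$, so $I=\bigoplus_{d\ge 0}(I\cap A_d)$ and the quotient $A/I=\bigoplus_{d\ge 0}(A/I)_d$ inherits a grading from $A$.

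Then I would record that $\varphi:A\to R$ is a graded homomorphism. Indeed $\varphi(x_i)=X_i$, and by construction $X_i\in R_1=H^0(X(6,8),\mathcal{L}(4\omega_2))^T$, so $\varphi(A_d)\subseteq R_d$ for all $d$; since $\varphi$ factors through $\pi:A\to A/I$ and the grading on $A/I$ is the quotient grading, the induced map $\tilde{\varphi}:A/I\to R$ satisfies $\tilde{\varphi}\big((A/I)_d\big)\subseteq R_d$ for all $d\ge 0$. Combining this with the previous lemma, $\tilde{\varphi}$ is a bijective homomorphism of graded $\mathbb{C}$-algebras.

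Finally I would invoke the standard fact that the inverse of a bijective graded ring homomorphism is again graded: if $\tilde{\varphi}$ is injective, surjective and degree-preserving, then for $r\in R_d$ the unique preimage $\tilde{\varphi}^{-1}(r)$ must lie in $(A/I)_d$, because writing it in homogeneous components and applying $\tilde{\varphi}$ would otherwise contradict injectivity on the homogeneous pieces. Hence $\tilde{\varphi}^{-1}$ is graded and $\tilde{\varphi}$ is an isomorphism of graded $\mathbb{C}$-algebras. There is no real obstacle here; the only point requiring a moment's care is confirming the homogeneity of the generators of $I$, which is immediate from their explicit quadratic form in \cref{lemma5.2}.
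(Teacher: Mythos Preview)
Your proposal is correct and matches the paper's approach: the paper states this corollary without proof, treating it as immediate from the preceding lemma, and your argument simply spells out the obvious reason---the generators of $I$ are homogeneous quadrics, so $A/I$ is graded, and $\tilde{\varphi}$ sends each $x_i$ to $X_i\in R_1$, hence is degree-preserving.
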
 
\begin{corollary}
$X$ is not a complete intersection variety.
\end{corollary}

\section{Torus quotient of $X(7,10)$ in $G_{2,10}$}\label{section6}

In this section, we study the GIT quotient of $X(7,10)$ in $G_{2,10}.$
 
Let $w=(s_6s_5s_4s_3s_2s_1)(s_9s_8s_7s_6s_5s_4s_3s_2).$ In one line notation $w=(7,10).$
We consider the Schubert variety $X(w)$ in the Grassmannian $G_{2,10}.$ Let $R=\bigoplus\limits_{d\ge 0} R_{d},$ where  $R_{d}=H^0(X(w), \mathcal{L}^{\otimes d}(5\omega_{2}))^{T}.$ We note that $X=Proj(R)$ and $R_{d}$'s   are finite dimensional vector space. Let  $X_{1}=p_{12}p_{34}p_{58}p_{69}p_{710},$ $X_{2} =p_{12}p_{35}p_{48}p_{69}p_{710},$ $X_{3}=p_{12}p_{36}p_{48}p_{59}p_{710},$ $X_{4}=p_{12}p_{37}p_{48}p_{59}p_{610},$ $X_{5}=p_{13}p_{26}p_{48}p_{59}p_{710},$ $X_{6}=p_{13}p_{25}p_{48}p_{69}p_{710},$ $X_{7}=p_{13}p_{24}p_{58}p_{69}p_{710},$ $X_{8}=p_{13}p_{27}p_{48}p_{59}p_{610},$ $X_{9}=p_{14}p_{26}p_{38}p_{59}p_{710},$
$X_{10}=p_{14}p_{25}p_{38}p_{69}p_{710},$
$X_{11}=p_{14}p_{27}p_{38}p_{59}p_{610},$
$X_{12}=p_{15}p_{26}p_{38}p_{49}p_{710},$
$X_{13}=p_{15}p_{27}p_{38}p_{49}p_{610},$ and 
$X_{14}=p_{16}p_{27}p_{38}p_{49}p_{510}.$ Let $Y_1=p_{12}p_{13}p_{23}p_{47}p_{48}p_{58}p_{59}p_{69}p_{610}p_{710}$,  $Y_2=p_{12}p_{13}p_{23}p_{46}p_{48}p_{58}p_{59}\\p_{69}p_{710}^2$, $Y_3=p_{12}p_{13}p_{23}p_{45}p_{48}p_{58}p_{69}^2p_{710}^2$.

\begin{lemma}\label{lemma6.1}
	Let $X_i$'s and $Y_i$'s are as above. Then we have the following relations:
	
	\begin{itemize}
		\item [(1)] $Y_1=X_4X_7-X_1X_8$
		
		\item [(2)] $Y_2=X_3X_7-X_1X_5$
		
		\item [(3)] $Y_3=X_2X_7-X_1X_6.$
	\end{itemize}
\end{lemma}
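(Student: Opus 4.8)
The plan is to prove each of the three identities by the same direct computation: expand the right-hand side as a product of two Plücker monomials, identify the unique non-comparable pair of Plücker coordinates, apply the Plücker relation to that pair, and check that exactly one of the resulting two terms is the standard monomial $Y_i$ while the other is the standard monomial on the left (after cancellation of a common factor, or rather, that the whole product straightens to $Y_i$ plus one other standard monomial). Concretely, I would first record the factorizations, e.g. for (1),
\[
X_4X_7=\bigl(p_{12}p_{37}p_{48}p_{59}p_{610}\bigr)\bigl(p_{13}p_{24}p_{58}p_{69}p_{710}\bigr),
\]
and observe that all pairs of factors are comparable in the Bruhat order on $I(2,10)$ except the pair $p_{37}p_{24}$ (here $p_{24}$ and $p_{37}$ are incomparable). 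Then I apply the Plücker relation $p_{24}p_{37}=p_{23}p_{47}+p_{27}p_{34}$ — being careful about the sign convention from the Plücker relation $p_{i,l}p_{j,k}-p_{i,k}p_{j,l}+p_{i,j}p_{k,l}$ quoted earlier, so that $p_{24}p_{37}=p_{23}p_{47}+p_{27}p_{34}$ with the appropriate signs — and multiply the rest back in.

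The key check in each case is that after the single straightening step the two terms are already both standard (so no further iteration of the straightening law is needed), and that one of them is visibly $Y_i$ and the other is visibly $X_1 X_j$ for the appropriate $j$. For (1) I expect $X_4X_7 = p_{12}p_{13}p_{23}p_{47}p_{48}p_{58}p_{59}p_{69}p_{610}p_{710} + p_{12}p_{13}p_{27}p_{34}p_{48}p_{58}p_{59}p_{69}p_{610}p_{710}$, where the first term is $Y_1$ and the second, after reordering the indices into a valid semistandard tableau, equals $p_{12}p_{13}p_{27}p_{34}p_{48}p_{58}p_{59}p_{69}p_{610}p_{710}$; one then checks this is exactly $X_1X_8=(p_{12}p_{34}p_{58}p_{69}p_{710})(p_{13}p_{27}p_{48}p_{59}p_{610})$, giving $Y_1 = X_4X_7 - X_1X_8$. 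Identities (2) and (3) are handled identically: for (2) the incomparable pair is $p_{36}$ versus $p_{24}$ coming from $X_3X_7$ and one uses $p_{24}p_{36}=p_{23}p_{46}+p_{26}p_{34}$; for (3) the pair is $p_{35}$ versus $p_{24}$ from $X_2X_7$ and one uses $p_{24}p_{35}=p_{23}p_{45}+p_{25}p_{34}$. In each case one term is $Y_i$ and the other is $X_1$ times the appropriate generator.

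The main obstacle, such as it is, is bookkeeping rather than anything conceptual: one must track the sign in the Plücker relation consistently (the convention in the paper puts a $+p_{i,j}p_{k,l}$ term, so $p_{i,l}p_{j,k} = p_{i,k}p_{j,l} - p_{i,j}p_{k,l}$, and one should re-index so that $p_{24}p_{3m}$ is in the shape $p_{i,l}p_{j,k}$ with $i=2, j=3, k=m, l=4$ — wait, that needs $k<l$, so instead take $i=2<j=3<k=4<m$ and use $p_{2,m}p_{3,4}$... ), and one must verify at each stage that the monomial produced really is a standard monomial on $X(w)$, i.e. that every Plücker coordinate appearing is $\le w=(7,10)$ and that the tableau columns are non-decreasing and rows strictly increasing. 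Since $w=(7,10)$ is quite large, the constraint $\underline{i}\le w$ is easy to satisfy, so the only real verification is the tableau ordering, which is mechanical. I would therefore present (1) in full detail with the straightening displayed line by line (as in \cref{lemma5.1} and \cref{lemma4.1}) and remark that (2) and (3) follow by the same argument with the analogous Plücker relation.
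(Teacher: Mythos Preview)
Your approach --- apply a single Pl\"ucker relation and read off the two resulting terms --- is exactly the paper's. One correction to your narrative, though: $(2,4)\le(3,7)$ in the Bruhat order on $I(2,10)$, so $p_{24}$ and $p_{37}$ are \emph{comparable} and $X_4X_7$ is in fact already a standard monomial; the same holds for $X_3X_7$ and $X_2X_7$. The paper therefore works from the other side, expanding the non-standard product $X_1X_8$ (respectively $X_1X_5$, $X_1X_6$), whose genuinely incomparable pair is $p_{34},p_{27}$ (respectively $p_{34},p_{26}$; $p_{34},p_{25}$), and straightens via $p_{27}p_{34}=p_{24}p_{37}-p_{23}p_{47}$, etc.

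Your use of the same Pl\"ucker identity on $X_4X_7$ is of course algebraically valid and gives the same result, since the relation $p_{24}p_{37}=p_{27}p_{34}+p_{23}p_{47}$ holds regardless of comparability. But then the term $X_1X_8$ you produce is \emph{not} standard (its factors $(2,7)$ and $(3,4)$ are incomparable), contrary to the ``key check'' you propose. This does no harm to the lemma itself, which only asserts an identity in $R_2$ and says nothing about standardness of the terms; just drop the claim that both pieces are standard, or switch to expanding $X_1X_8$ as the paper does.
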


\begin{proof}
	Proof of $(1)$: 
	\begin{equation*}
	\begin{aligned}
	X_1X_8 &=p_{12}p_{13}(p_{27}p_{34})p_{48}p_{58}p_{59}p_{69}p_{610}p_{710}\\
	&=p_{12}p_{13}p_{24}p_{37}p_{48}p_{58}p_{59}p_{69}p_{610}p_{710}-p_{12}p_{13}p_{23}p_{47}p_{48}p_{58}p_{59}p_{69}p_{610}p_{710}\\
	& \hspace{.5cm} [\text{by using}~ p_{27}p_{34}=p_{24}p_{37}-p_{23}p_{47}]\\
	&=X_4X_7-Y_1.
	\end{aligned}
	\end{equation*}
	
	Proof of $(2)$: 
	\begin{equation*}
	\begin{aligned}
	X_1X_5 &=p_{12}p_{13}(p_{26}p_{34})p_{48}p_{58}p_{59}p_{69}p_{710}^2\\
	&=p_{12}p_{13}p_{24}p_{36}p_{48}p_{58}p_{59}p_{69}p_{710}^2-p_{12}p_{13}p_{23}p_{46}p_{48}p_{58}p_{59}p_{69}p_{710}^2\\
	& \hspace{.5cm} [\text{by using}~ p_{26}p_{34}=p_{24}p_{36}-p_{23}p_{46}]\\
	&=X_3X_7-Y_2.
	\end{aligned}
	\end{equation*}
	
	Proof of $(3)$: 
	\begin{equation*}
	\begin{aligned}
	X_1X_6 &=p_{12}p_{13}(p_{25}p_{34})p_{48}p_{58}p_{69}^2p_{710}^2\\
	&=p_{12}p_{13}p_{24}p_{35}p_{48}p_{58}p_{69}^2p_{710}^2-p_{12}p_{13}p_{23}p_{45}p_{48}p_{58}p_{69}^2p_{710}^2\\
	& \hspace{.5cm} [\text{by using}~ p_{26}p_{34}=p_{24}p_{36}-p_{23}p_{46}]\\
	&=X_2X_7-Y_3.
	\end{aligned}
	\end{equation*}
\end{proof}

\begin{lemma}\label{lemma6.2}
	In $R_2,$ the following relations among $X_i$'s hold :
	\begin{itemize}
		\item [(1)] $X_2X_9-X_3X_{10}+X_3X_7-X_1X_3-X_2X_7+X_1X_2=0$
		
		\item [(2)] $X_2X_{11}-X_4X_{10}+X_4X_7-X_1X_4-X_2X_7+X_1X_2=0$
		
		\item [(3)] $X_3X_8-X_4X_5-X_1X_8+X_4X_7-X_3X_7+X_1X_5=0$
		
		\item [(4)] $X_3X_{13}-X_4X_{12}+X_4X_6-X_2X_4-X_3X_6+X_2X_3=0$

		\item [(5)] $X_3X_{11}-X_4X_9+X_4X_7-X_1X_4-X_3X_7+X_1X_3=0$
		
		\item [(6)] $X_{10}X_{14}-X_9X_{13}+X_4X_9-X_4X_{10}+X_3X_7-X_1X_3-X_2X_7+X_1X_2=0$
		
		\item [(7)] $X_5X_{10}-X_6X_9=0$
		
		\item [(8)] $X_8X_{12}-X_5X_{13}=0$
		
		\item [(9)] $X_5X_{11}-X_8X_{9}=0$
		
		\item [(10)] $X_6X_{11}-X_8X_{10}=0$
		
		\item [(11)] $X_9X_{13}-X_{11}X_{12}=0$
		
		\item [(12)] $X_2X_8-X_4X_6-X_2X_7+X_4X_7-X_1X_8+X_1X_6=0$
		
		\item [(13)] $X_2X_5-X_3X_6+X_3X_7-X_1X_5+X_1X_6-X_2X_7=0$
		
		\item [(14)] $X_1X_{14}-X_4X_9+X_4X_5-X_1X_4+X_1X_3-X_1X_5=0$
		
		\item [(15)] $X_1X_{13}-X_4X_{10}+X_4X_6-X_1X_4-X_1X_6+X_1X_2=0$
		
		\item[(16)] $X_1X_{12}-X_3X_{10}+X_3X_6-X_1X_3-X_1X_6+X_1X_2=0$
		
		\item[(17)] $X_7X_{14}-X_8X_9+X_4X_5-X_4X_7+X_3X_7-X_1X_5=0$
		
		\item[(18)] $X_6X_{14}-X_8X_{12}+X_4X_5-X_4X_6+X_3X_7-X_1X_5+X_1X_6-X_2X_7=0$
		
		\item[(19)] $X_2X_{14}-X_4X_{12}+X_4X_5-X_2X_4-X_3X_6+X_2X_3+X_3X_7-X_1X_5+X_1X_6-X_2X_7=0$
		
		\item[(20)] $X_7X_{12}-X_5X_{10}+X_3X_6-X_3X_7+X_2X_7-X_1X_6=0$
		
		\item[(21)] $X_7X_{13}-X_8X_{10}+X_4X_6-X_4X_7+X_2X_7-X_1X_6=0.$
	\end{itemize}
\end{lemma}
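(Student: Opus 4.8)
The plan is to prove all twenty-one identities by the method already used for \cref{lemma6.1} and, at the elimination stage, for \cref{lemma5.2}. The key simplification is that on $X(7,10)$ a Pl\"ucker coordinate $p_{a,b}$ vanishes identically exactly when $a\ge 8$, equivalently $(a,b)\not\le(7,10)$; thus $p_{8,9}=p_{8,10}=p_{9,10}=0$ on $X(7,10)$, and this is the mechanism that collapses the long products of Pl\"ucker coordinates arising below to short combinations of standard monomials.

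First I would assemble a list of \emph{primitive} quadratic straightening identities. For each non-standard product $X_iX_j$ that occurs in the statement, apply the quadratic Pl\"ucker relation $p_{a,d}p_{b,c}=p_{a,c}p_{b,d}-p_{a,b}p_{c,d}$ to an incomparable pair of factors, iterate on whatever factors remain non-comparable, and discard every term acquiring a factor $p_{a,b}$ with $a\ge 8$. Each such computation rewrites $X_iX_j$ as a $\mathbb{Z}$-linear combination of standard degree-two monomials, among which $Y_1,Y_2,Y_3$ appear as named standard monomials; \cref{lemma6.1} already records the three identities coming from $X_1X_8$, $X_1X_5$, $X_1X_6$, and the analogous (one- or several-step) computations for the other non-standard products that appear --- among them $X_2X_9$, $X_2X_{11}$, $X_3X_8$, $X_3X_{11}$, $X_3X_{13}$, $X_2X_8$, $X_2X_5$, $X_1X_{12}$, $X_1X_{13}$, $X_1X_{14}$, $X_6X_{14}$, $X_7X_{12}$, $X_7X_{13}$, $X_7X_{14}$, $X_2X_{14}$ and $X_{10}X_{14}$ --- supply the rest.

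With these primitive identities in hand, relations (1)--(6) and (12)--(21) follow by pure linear elimination of the symbols $Y_1,Y_2,Y_3$ between them and \cref{lemma6.1}(1)--(3), in exactly the bookkeeping style of the proof of \cref{lemma5.2}(2),(3),(5); for example, (3) is the primitive identity for $X_3X_8$ combined with \cref{lemma6.1}(1) and (2). Relations (7)--(11) require no straightening at all: in each case the two products on the two sides become, after reordering the Pl\"ucker factors, the \emph{same} monomial, so the difference vanishes identically already on $G_{2,10}$ --- for instance $X_5X_{10}$ and $X_6X_9$ are both equal to $p_{13}p_{14}p_{25}p_{26}p_{38}p_{48}p_{59}p_{69}p_{710}^{2}$.

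I expect the only genuine difficulty to be organizational. There are many one-step straightening reductions to carry out, some needing two or three applications of the Pl\"ucker relations before a standard form is reached, and several three-term $Y$-eliminations to track; at each stage one must verify that the monomials produced really are standard on $X(7,10)$ --- i.e. that the only factors killed are of the form $p_{8,\ast}$ or $p_{9,10}$ --- so that the stated identities hold on the nose rather than merely modulo further relations. Once the primitive identities are fixed, each of the twenty-one relations reduces to a short linear computation.
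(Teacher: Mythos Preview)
Your proposal is correct and follows essentially the same method as the paper: the paper straightens $X_2X_9$ explicitly via iterated Pl\"ucker relations and the vanishing $p_{89}|_{X(7,10)}=0$, obtains (1) directly, and then simply declares that ``Proof of (2),(3),\ldots,(21) are similar to (1).'' Your observation that (7)--(11) are literal identities (both products equal the same monomial in the $p_{ij}$) is correct and makes explicit a case the paper elides, and your $Y$-elimination device is a clean way to organise several of the remaining cases, although for some relations---such as (1) itself---the direct straightening already lands among the $X_iX_j$ without any $Y_1,Y_2,Y_3$ appearing.
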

\begin{proof}
	
	Proof of $(1)$:
	\begin{equation*}
	\begin{aligned}
	X_2X_9 & =p_{12}p_{14}(p_{26}p_{35})p_{38}p_{48}p_{59}p_{69}p_{710}^2\\
	& =p_{12}p_{14}(p_{25}p_{36}-p_{23}p_{56})p_{38}p_{48}p_{59}p_{69}p_{710}^2 ~[\text{by using}~ p_{26}p_{35}=p_{25}p_{36}-p_{23}p_{56}]\\
	& =p_{12}p_{14}p_{25}p_{36}p_{38}p_{48}p_{59}p_{69}p_{710}^2-p_{12}(p_{14}p_{23})p_{56}p_{38}p_{48}p_{59}p_{69}p_{710}^2\\
	& =p_{12}p_{14}p_{25}p_{36}p_{38}p_{48}p_{59}p_{69}p_{710}^2-p_{12}(p_{13}p_{24}-p_{12}p_{34})p_{56}p_{38}p_{48}p_{59}p_{69}p_{710}^2\\
	&\hspace{.5cm} [\text{by using}~ p_{14}p_{23}=p_{13}p_{24}-p_{12}p_{34}]\\
	& =p_{12}p_{14}p_{25}p_{36}p_{38}p_{48}p_{59}p_{69}p_{710}^2-p_{12}p_{13}p_{24}(p_{56}p_{38})p_{48}p_{59}p_{69}p_{710}^2+p_{12}^2p_{34}(p_{56}p_{38})p_{48}p_{59}p_{69}p_{710}^2\\
	& =p_{12}p_{14}p_{25}p_{36}p_{38}p_{48}p_{59}p_{69}p_{710}^2-p_{12}p_{13}p_{24}(p_{36}p_{58}-p_{35}p_{68})p_{48}p_{59}p_{69}p_{710}^2+p_{12}^2p_{34}(p_{36}p_{58}\\
	& \hspace{.5cm}-p_{35}p_{68})p_{48}p_{59}p_{69}p_{710}^2 ~[\text{by using}~ p_{38}p_{56}=p_{36}p_{58}-p_{35}p_{68}]\\
	\end{aligned}
\end{equation*}
\begin{equation*}
\begin{aligned}
	& =p_{12}p_{14}p_{25}p_{36}p_{38}p_{48}p_{59}p_{69}p_{710}^2-p_{12}p_{13}p_{24}p_{36}p_{48}p_{58}p_{59}p_{69}p_{710}^2+p_{12}p_{13}p_{24}p_{35}p_{48}(p_{68}p_{59})p_{69}p_{710}^2\\
	& \hspace{.5cm}+p_{12}^2p_{34}p_{36}p_{48}p_{58}p_{59}p_{69}p_{710}^2-p_{12}^2p_{34}p_{35}p_{48}(p_{68}p_{59})p_{69}p_{710}^2\\
	& =p_{12}p_{14}p_{25}p_{36}p_{38}p_{48}p_{59}p_{69}p_{710}^2-p_{12}p_{13}p_{24}p_{36}p_{48}p_{58}p_{59}p_{69}p_{710}^2+p_{12}p_{13}p_{24}p_{35}p_{48}(p_{58}p_{69}-\\
	& \hspace{.5cm}p_{56}p_{89})p_{69}p_{710}^2+p_{12}^2p_{34}p_{36}p_{48}p_{58}p_{59}p_{69}p_{710}^2-p_{12}^2p_{34}p_{35}p_{48}(p_{58}p_{69}-p_{56}p_{89})p_{69}p_{710}^2\\
	& \hspace{.5cm} [\text{by using}~ p_{59}p_{68}=p_{58}p_{69}-p_{56}p_{89}]\\
	& =p_{12}p_{14}p_{25}p_{36}p_{38}p_{48}p_{59}p_{69}p_{710}^2-p_{12}p_{13}p_{24}p_{36}p_{48}p_{58}p_{59}p_{69}p_{710}^2+p_{12}p_{13}p_{24}p_{35}p_{48}p_{58}p_{69}^2p_{710}^2\\
	& \hspace{.5cm}+p_{12}^2p_{34}p_{36}p_{48}p_{58}p_{59}p_{69}p_{710}^2-p_{12}^2p_{34}p_{35}p_{48}p_{58}p_{69}^2p_{710}^2 ~[\text{since } p_{89}|_{X(7,10)}=0]\\
	&=X_3X_{10}-X_3X_7+X_2X_7+X_1X_3-X_1X_2.
	\end{aligned}
	\end{equation*}

Proof of (2),(3),...,(21) are similar to (1).
\end{proof}

We now use Macaulay 2 to see that the ideal $I$ generated by the above relations is a prime ideal in the ring $R$ generated by $X_1, X_2, X_3, X_4, X_5, X_6, X_7, X_8, X_9, X_{10}, X_{11}, X_{12}, X_{13}, X_{14}$. 

\scriptsize{
	\begin{lstlisting}

i1 : R=QQ[x_1,x_2,x_3,x_4,x_5,x_6,x_7,x_8,x_9,x_10,x_11,x_12,x_13,x_14]
	
o1 = R
	
o1 : PolynomialRing
	
i2 : I=ideal{x_2*x_9-x_3*x_10+x_3*x_7-x_1*x_3-x_2*x_7+x_1*x_2,x_2*x_11-x_4*x_10+x
x_4*x_7-x_1*x_4-x_2*x_7+x_1*x_2,x_3*x_8-x_4*x_5-x_1*x_8+x_4*x_7-x_3*x_7+x_1*x_5,x
x_3*x_13-x_4*x_12+x_4*x_6-x_2*x_4-x_3*x_6+x_2*x_3,x_3*x_11-x_4*x_9+x_4*x_7-x_1*x_
_4-x_3*x_7+x_1*x_3,x_10*x_14-x_9*x_13+x_4*x_9-x_4*x_10+x_3*x_7-x_1*x_3-x_2*x_7+x_
_1*x_2,x_5*x_10-x_6*x_9,x_8*x_12-x_5*x_13,x_5*x_11-x_8*x_9,x_6*x_11-x_8*x_10,x_9*
*x_13-x_11*x_12,x_1*x_6-x_4*x_6-x_2*x_7+x_4*x_7-x_1*x_8+x_2*x_8,x_2*x_5-x_3*x_6+x
x_3*x_7-x_1*x_5+x_1*x_6-x_2*x_7,x_1*x_14-x_4*x_9+x_4*x_5-x_1*x_4+x_1*x_3-x_1*x_5,
,x_1*x_13-x_4*x_10+x_4*x_6-x_1*x_4-x_1*x_6+x_1*x_2,x_1*x_12-x_3*x_10+x_3*x_6-x_1*
*x_3-x_1*x_6+x_1*x_2,x_7*x_14-x_8*x_9+x_4*x_5-x_4*x_7+x_3*x_7-x_1*x_5,x_6*x_14-x_
_8*x_12+x_4*x_5-x_4*x_6+x_3*x_7-x_1*x_5+x_1*x_6-x_2*x_7,x_2*x_14-x_4*x_12+x_4*x_5
5-x_2*x_4-x_3*x_6+x_2*x_3+x_3*x_7-x_1*x_5+x_1*x_6-x_2*x_7,x_7*x_12-x_5*x_10+x_3*x
x_6-x_3*x_7+x_2*x_7-x_1*x_6,x_7*x_13-x_8*x_10+x_4*x_6-x_4*x_7+x_2*x_7-x_1*x_6}
	
	
o2 = ideal (x x  - x x  - x x  + x x  + x x  - x x  , x x  - x x  - x x  + x x     
             1 2    1 3    2 7    3 7    2 9    3 10   1 2    1 4    2 7    4 7         
------------------------------------------------------------------------------------------
- x x + x x  , x x  - x x  - x x  + x x  - x x  + x x , x x  - x x - x x  + x x
   4 10  2 11   1 5    4 5    3 7    4 7    1 8    3 8   2 3    2 4   3 6    4 6
------------------------------------------------------------------------------------------
    - x x   + x x  , x x  - x x - x x  + x x  - x x  + x x  , x x  - x x - x x
       4 12    3 13   1 3    1 4   3 7    4 7    4 9    3 11   1 2    1 3   2 7
------------------------------------------------------------------------------------------
    + x x  + x x  - x x   - x x   + x  x  , - x x + x x  , x x   - x x  , - x x  
      3 7    4 9    4 10    9 13    10 14     6 9    5 10   8 12    5 13     8 9   
------------------------------------------------------------------------------------------
+ x x  , - x x   + x x  , - x  x   + x x  , x x  - x x  - x x  + x x  - x x
   5 11     8 10    6 11     11 12    9 13   1 6    4 6    2 7    4 7    1 8
------------------------------------------------------------------------------------------
  + x x , - x x  + x x  + x x  - x x  - x x  + x x , x x  - x x  - x x  + x x  - x x 
    2 8     1 5    2 5    1 6    3 6    2 7    3 7   1 3    1 4    1 5    4 5    4 9
------------------------------------------------------------------------------------------
+ x x  , x x  - x x  - x x  + x x  - x x   + x x  , x x  - x x  - x x  + x x 
   1 14   1 2    1 4    1 6    4 6    4 10    1 13   1 2    1 3    1 6    3 6    
------------------------------------------------------------------------------------------
 - x x   + x x  , - x x  + x x  + x x  - x x  - x x  + x x  ,  - x x  + x x
    3 10    1 12     1 5    4 5    3 7    4 7    8 9    7 14      1 5    4 5
------------------------------------------------------------------------------------------
  + x x  - x x  - x x  + x x  - x x   + x x  , x x  - x x  - x x  + x x 
     1 6    4 6    2 7    3 7    8 12    6 14   2 3    2 4    1 5    4 5
------------------------------------------------------------------------------------------
 + x x  - x x  - x x  + x x  - x x   + x x  , - x x  + x x  + x x  - x x 
    1 6    3 6    2 7    3 7    4 12    2 14     1 6    3 6    2 7    3 7 
------------------------------------------------------------------------------------------
 - x x   + x x  , - x x  + x x  + x x  - x x  - x x   + x x  )
    5 10    7 12     1 6    4 6    2 7    4 7    8 10    7 13

o2 : Ideal of R
	
i3 : isPrime I
	
o3 = true
	
i4 : A=R/I
	
o4 = A
	
o4 : QuotientRing
	
i5 : dim A
	
o5 = 6

\end{lstlisting}
}

\normalsize{We note that by \cref{cor3.4}, $R$ is a graded $\mathbb{C}$-algebra which is an integral domain and generated by $R_1$ elements $X_i,$ $1\le i\le 14.$} 

Let $A=\mathbb{C}[x_{1}, x_{2}, x_{3},..., x_{14}]$ be the polynomial ring in $x_{i}$'s for $1\le i\le 14.$

Let $I$ be the ideal of $A$ generated by the above relations. Then $I$ is a prime ideal of $A.$ Therefore, $A/I$ is an integral domain. Let $\pi: A\longrightarrow A/I$ be the natural surjective homomorphism of $\mathbb{C}$-algebras. Let $\varphi: A\longrightarrow R$ be the $\mathbb{C}$-algebra homomorphism defined by $x_{i}\mapsto X_i.$ By Lemma \ref{lemma6.2}, $\varphi$ induces  $\mathbb{C}$-algebra homomorphism $\tilde{\varphi}:A/I\longrightarrow R.$ 

Note that by \cref{lemma3.1}, $X(w)^{s}_{T}(\mathcal{L}(5\omega_{2}))\neq \emptyset$. Therefore, Krull dimension of $R$ is $6.$ On the other hand, Krull dimension of $A/I$ is $6.$
\begin{lemma}
	$\tilde{\varphi}:A/I\longrightarrow R$ is an isomorphism of $\mathbb{C}$-algebras.
\end{lemma}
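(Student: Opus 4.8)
The plan is to run, essentially verbatim, the argument that established Lemma~\ref{lemma4.6} (and its analogue in \cref{section5}): all of the structural inputs it requires are already in place, so the proof is almost entirely bookkeeping.

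First I would verify that $\tilde{\varphi}$ is surjective. As remarked just above (a consequence of \cref{cor3.4}), the graded ring $R$ is generated as a $\mathbb{C}$-algebra by $X_1,\dots,X_{14}$, and $X_i=\varphi(x_i)$ for every $i$; since $\tilde{\varphi}\circ\pi=\varphi$, the induced homomorphism $\tilde{\varphi}\colon A/I\to R$ is onto.

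Next I would compare Krull dimensions. Because $X(w)$ is irreducible, $R$ is an integral domain, hence $\mathrm{Ker}(\tilde{\varphi})$ is a prime ideal of $A/I$. Applying \cite[Theorem 1.8A.(b), p.6]{R} to the finitely generated domain $A/I$ and the prime quotient $(A/I)/\mathrm{Ker}(\tilde{\varphi})\cong R$ gives
\[
\dim\bigl((A/I)/\mathrm{Ker}(\tilde{\varphi})\bigr)+\mathrm{ht}\bigl(\mathrm{Ker}(\tilde{\varphi})\bigr)=\dim(A/I).
\]
The Macaulay~2 session above yields $\dim(A/I)=6$; on the other hand, since $w(5\omega_2)<0$, \cref{lemma3.1} gives $X(w)^{s}_T(\mathcal{L}(5\omega_2))\neq\emptyset$, so the torus acts with finite generic stabiliser on $X(w)^{ss}_T(\mathcal{L}(5\omega_2))$ and therefore $\dim R=\dim X(w)-\dim T+1=14-9+1=6$. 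Consequently $\mathrm{ht}(\mathrm{Ker}(\tilde{\varphi}))=0$, and since $A/I$ is a domain its only height-zero prime is $(0)$; hence $\mathrm{Ker}(\tilde{\varphi})=0$, and as $\varphi$ sends each $x_i$ to the degree-one element $X_i$ we conclude that $\tilde{\varphi}$ is an isomorphism of graded $\mathbb{C}$-algebras.

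The sole non-routine point is the identity $\dim R=\dim(A/I)$: the value $\dim(A/I)=6$ is read off the machine computation, whereas $\dim R$ must be pinned down \emph{intrinsically}. This is precisely what \cref{lemma3.1} supplies, since the existence of stable points forces $\dim\bigl(T\backslash\backslash X(w)\bigr)$ to equal the expected value $\dim X(w)-\mathrm{rk}(G)$. Once these two dimensions are matched, the height computation closes the argument at once; everything else (surjectivity, integrality of $R$, and primeness of $I$) has already been checked above.
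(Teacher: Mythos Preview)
Your proof is correct and follows essentially the same approach as the paper, which simply says the argument is identical to that of Lemma~\ref{lemma4.6}: surjectivity from \cref{cor3.4}, primeness of $\mathrm{Ker}(\tilde{\varphi})$ since $R$ is a domain, and the dimension comparison via \cite[Theorem 1.8A(b)]{R} using $\dim(A/I)=6$ from Macaulay~2 and $\dim R=6$ from the existence of stable points (\cref{lemma3.1}). The only minor excess is that you conclude ``isomorphism of graded $\mathbb{C}$-algebras'' in the final sentence, whereas the lemma as stated asserts only an isomorphism of $\mathbb{C}$-algebras; the graded refinement is recorded separately as the subsequent corollary.
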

\begin{proof}
	Proof is similar to the proof of Lemma \ref{lemma4.6}.  
\end{proof}
\begin{corollary}
	$\tilde{\varphi}:A/I\longrightarrow R$ is an isomorphism as graded $\mathbb{C}$-algebras.
\end{corollary}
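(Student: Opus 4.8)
The plan is to reduce everything to the foregoing lemma, which already gives that $\tilde{\varphi} \colon A/I \to R$ is an isomorphism of $\mathbb{C}$-algebras; all that remains is to verify that it respects the gradings on both sides. First I would put the standard grading on the polynomial ring $A = \mathbb{C}[x_1, \dots, x_{14}]$, assigning each $x_i$ degree $1$. Every generator of $I$ listed in \cref{lemma6.2} is homogeneous of degree $2$ in the $x_i$, so $I$ is a homogeneous ideal and $A/I$ is naturally a graded $\mathbb{C}$-algebra, $A/I = \bigoplus_{d \geq 0}(A/I)_d$.

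Next I would observe that $R = \bigoplus_{d \geq 0} R_d$ is graded by construction, with $R_d = H^0(X(w), \mathcal{L}(5\omega_2)^{\otimes d})^T$, and that the defining homomorphism $\varphi \colon A \to R$ sends $x_i \mapsto X_i$ with each $X_i \in R_1$. Since $\varphi$ is a $\mathbb{C}$-algebra homomorphism out of a polynomial ring, it sends a monomial of total degree $d$ to a product of $d$ elements of $R_1$, hence into $R_d$; thus $\varphi(A_d) \subseteq R_d$ for all $d$, i.e.\ $\varphi$ is graded of degree $0$. Because $I$ is homogeneous, this descends to the induced map, so $\tilde{\varphi} \colon A/I \to R$ satisfies $\tilde{\varphi}((A/I)_d) \subseteq R_d$ for all $d$.

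Finally, I would combine this with the foregoing lemma: $\tilde{\varphi}$ is a bijective, degree-preserving homomorphism, and such a map is automatically an isomorphism of graded algebras — its inverse preserves the grading as well, since bijectivity together with $\tilde{\varphi}((A/I)_d) \subseteq R_d$ forces $\tilde{\varphi}((A/I)_d) = R_d$ for each $d$. I do not expect any real obstacle here: the argument is purely formal, the only inputs being that $I$ is generated in a single degree and that all the $X_i$ lie in $R_1$. The genuinely substantive steps — checking that $I$ is prime (via Macaulay2) and that the Krull dimensions of $A/I$ and $R$ both equal $6$, which together force $\ker \tilde{\varphi} = 0$ — have already been carried out in establishing the preceding lemma.
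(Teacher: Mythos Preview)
Your argument is correct and is exactly the formal verification the paper has in mind: the corollary is stated without proof in the paper, being regarded as immediate from the preceding lemma together with the evident facts that $I$ is homogeneous and each $X_i$ lies in $R_1$. There is nothing to add.
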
 
\begin{corollary}
$X$ is not a complete intersection variety.
\end{corollary}

\section{singularities}\label{section7}
In this section, we study the singularity of the torus quotients of Schubert varieties in $G_{2,n}.$ To proceed further, we recall the following proposition on the smooth locus of  Schubert varieties in minuscule Grassmannian due to M. Brion and P. Polo (see \cite[Proposition 3.3(a), p.314]{BP}). We introduce some notation to state the proposition. Let $H$ be a simple algebraic group over algebraically closed field $k.$ Let $Q$ be a minuscule parabolic subgroup of $H.$ Let $W$ (respectively, $W_{Q}$) be the Weyl group of $H$ (respectively, $Q$). Let $W^{Q}$ be the minimal coset representative of  $W/W_{Q}.$ For $w\in W^{Q},$ let $X(w)$ be the Schubert variety in $H/Q$ corresponding to $w.$ Let $X(w)_{sm}$ be the smooth locus of $X(w).$ Then we have   

\begin{proposition}(See \cite[Proposition 3.3(a), p.314]{BP}):\label{proposition7.1}
Let $w\in W^{Q}.$ Let $P_{w}$ be the stabilizer of $X(w)$ in $G.$ Then  $X(w)_{sm}=P_{w}wP/P\subseteq X(w).$
\end{proposition}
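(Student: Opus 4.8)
The plan is to establish the two inclusions $P_{w}wQ/Q\subseteq X(w)_{sm}$ and $X(w)_{sm}\subseteq P_{w}wQ/Q$ separately; only the second will use that $Q$ is minuscule. For the first inclusion, observe that since $B\subseteq P_{w}$ the orbit $P_{w}wQ/Q$ contains the big Schubert cell $BwQ/Q$, which is an affine space dense in $X(w)$; hence $\overline{P_{w}wQ/Q}=X(w)$, so the orbit $P_{w}wQ/Q$ is open in $X(w)$. Being homogeneous under $P_{w}$, it is smooth, and therefore contained in $X(w)_{sm}$. Equivalently: $wQ/Q$ lies in the smooth cell $BwQ/Q$, and $X(w)_{sm}$ is stable under $\mathrm{Aut}(X(w))\supseteq P_{w}$.

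For the reverse inclusion I would proceed as follows. As $B\subseteq P_{w}$, the open set $X(w)_{sm}$ is $B$-stable, hence a union of Schubert cells $C_{v}=BvQ/Q$; moreover, because it is open and $C_{v}\subseteq\overline{C_{v'}}$ whenever $v\le v'$, it contains $C_{v'}$ for every $v'\in W^{Q}$ with $v\le v'\le w$ as soon as it contains $C_{v}$. Thus $X(w)_{sm}=\bigsqcup_{v\in U}C_{v}$ for an up-set $U\subseteq\{v\in W^{Q}:v\le w\}$ with $w\in U$; the same reasoning shows $P_{w}wQ/Q=\bigsqcup_{v\in U'}C_{v}$ for an up-set $U'$, and the first inclusion gives $U'\subseteq U$. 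So it suffices to prove $U\subseteq U'$: if $v\le w$ and $vQ/Q\notin P_{w}wQ/Q$, then $X(w)$ is singular at $vQ/Q$.

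To detect this singularity I would compute the Zariski tangent space $T_{vQ/Q}X(w)$ at the $T$-fixed point $vQ/Q$. Since $X(w)$ is $T$-stable and $vQ/Q$ is $T$-fixed, $T_{vQ/Q}X(w)$ is a $T$-submodule of $T_{vQ/Q}(H/Q)$; as $Q$ is minuscule this ambient module is multiplicity-free, so $T_{vQ/Q}X(w)$ is the direct sum of the $T$-root spaces it contains, indexed by a set $E(v,w)$ of roots that is described explicitly in terms of the covering relations in $W^{Q}$ from $v$ that stay $\le w$, and $X(w)$ is smooth at $vQ/Q$ exactly when $\#E(v,w)=\dim X(w)$. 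The heart of the argument, and what I expect to be the main obstacle, is the combinatorial statement that $\#E(v,w)=\dim X(w)$ forces $v$ to lie in the $W_{P_{w}}$-orbit of $w$, i.e. in $U'$. Here one identifies $P_{w}$ with the standard parabolic $P_{J}$, $J=\{\alpha\in S: s_{\alpha}X(w)=X(w)\}$, and exploits that $W^{Q}$ is a minuscule poset (a distributive lattice whose Schubert subvarieties are its order ideals): choosing $v$ maximal outside the $P_{w}$-orbit of $w$, the minuscule combinatorics of the interval $[v,w]$ should force $E(v,w)$ to be strictly larger than $\dim X(w)$, contradicting smoothness at $v$. Alternatively, one may invoke the known explicit description of the maximal singular Schubert subvarieties of $X(w)$ in a minuscule $H/Q$ and check that their union is precisely $X(w)\setminus P_{w}wQ/Q$.
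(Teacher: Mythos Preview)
The paper does not give its own proof of this proposition: it is simply quoted from Brion--Polo \cite[Proposition~3.3(a)]{BP} and then applied. So there is no argument in the paper to compare your proposal against.

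As for your proposal itself: the inclusion $P_{w}wQ/Q\subseteq X(w)_{sm}$ is correct and your argument for it is complete. For the reverse inclusion your outline is in the right spirit, but it stops precisely at the substantive step. Two remarks. First, your description of the set $E(v,w)$ as ``covering relations in $W^{Q}$ from $v$ that stay $\le w$'' is not accurate: the $T$-weights occurring in $T_{vQ/Q}X(w)$ correspond to reflections $s_{\beta}$ with $v<s_{\beta}v\le w$ in $W^{Q}$, not merely to covers of $v$; getting this count right already requires a nontrivial tangent-space theorem (Lakshmibai--Seshadri/Lakshmibai--Weyman in type $A$, and its minuscule analogues). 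Second, the statement you isolate as ``the main obstacle''---that $\#E(v,w)=\dim X(w)$ forces $v$ into the $P_{w}$-orbit of $w$---is exactly the content of the Brion--Polo result, and you do not prove it: you sketch a maximal-counterexample strategy without carrying it out, and your alternative of invoking ``the known explicit description of the maximal singular Schubert subvarieties'' is circular, since that description is precisely what Proposition~3.3(a) of \cite{BP} establishes. So what you have written is a reasonable roadmap that correctly locates the difficulty, but it is not a proof.
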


We use Proposition \ref{proposition7.1} when $H=SL(n, \mathbb{C}),$ and $Q=P^{\alpha_{2}}.$
Let $w \in W^{Q}$ be such that $w_{min}^{s}\le w$ in $W^{Q}.$ Let $w=(s_{a_{1}}\cdots s_{\frac{n}{2}}\cdots s_{1})(s_{n-1}\cdots s_{2}).$ Then we have $w=(a_{1}+1,n)$ in one line notation, and $a_{1}\ge \frac{n}{2}.$
We observe that $P_{w}$ is a parabolic subgroup of $G$ generated by $B,$ and  $\{s_{i}: i\neq a_{1}+1\}.$ Therefore, by \cref{proposition7.1}, we have $X(w)_{sm}=P_{w}wQ/Q.$ On the other hand, by \cite[Lemma 2.1, p.470]{KP}, $X(w)^{ss}_{T}(\mathcal{L}(\frac{n}{2}\omega_{2}))=\bigcup\limits_{w_{min}^{ss}\le u\le w}(BuQ/Q)^{ss}_{T}(\mathcal{L}(\frac{n}{2}\omega_{2})).$ Note that for $w_{min}^{ss}\le u\le w,$ $u$ is of the form $(s_{a}\cdots s_{\frac{n}{2}-1}\cdots s_{1})(s_{n-1}\cdots s_{2})$ for some $\frac{n}{2}-1\le a\le  a_{1}.$ Therefore, by Proposition \ref{proposition7.1}, we have $X(w)^{ss}_{T}(\mathcal{L}(\frac{n}{2}\omega_{2}))\subseteq X(w)_{sm}.$

Let $X=T\backslash\backslash X(w)^{ss}_{T}(\mathcal{L}(\frac{n}{2}\omega_2)).$ Let $\varphi: X(w)^{ss}_{T}(\mathcal{L}(\frac{n}{2}\omega_2))\longrightarrow X$ be the GIT quotient map. Let $Y=T\backslash X^{s}_{T}(\mathcal{L}(\frac{n}{2}\omega_{2})).$ Since $X(w)^{ss}_{T}(\mathcal{L}(\frac{n}{2}\omega_2))\setminus X(w)^{s}_{T}(\mathcal{L}(\frac{n}{2}\omega_2))$ is a $T$-invariant closed subset of $X(w)^{ss}_{T}(\mathcal{L}(\frac{n}{2}\omega_2)),$ $X\setminus Y$ is a closed subset of $X.$ Hence, $Y$ is an open subset of $X.$ Then we have

\begin{proposition}\label{proposition7.2}
	All the points of $Y$ are smooth points of $X.$
\end{proposition}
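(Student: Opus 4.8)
The plan is to use the fact that the semistable locus of $X(w)$ lies entirely inside the smooth locus of $X(w)$, combined with the general principle that a free quotient of a smooth variety by a torus is smooth. First I would recall the setup just established above: we have $X(w)^{ss}_T(\mathcal{L}(\frac{n}{2}\omega_2)) \subseteq X(w)_{sm}$, so every semistable point of $X(w)$ is a smooth point of the ambient Schubert variety $X(w)$. In particular, $X(w)^{ss}_T(\mathcal{L}(\frac{n}{2}\omega_2))$ is itself a smooth quasi-projective variety, being an open subvariety of $X(w)_{sm}$.

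Next I would restrict attention to the stable locus $X(w)^s_T(\mathcal{L}(\frac{n}{2}\omega_2))$, which is an open $T$-invariant subset of $X(w)^{ss}_T(\mathcal{L}(\frac{n}{2}\omega_2))$ and hence also a smooth variety. The key point is that on the stable locus the action of $T$ has finite stabilizers and all orbits are closed, so $\varphi$ restricted there realizes $Y = T\backslash X(w)^s_T(\mathcal{L}(\frac{n}{2}\omega_2))$ as a geometric quotient of a smooth variety by $T$. Passing to the quotient $T_{\mathrm{ad}} := T/(\text{generic stabilizer})$ if necessary, or simply invoking Luna's slice theorem, one sees that $Y$ has at worst finite quotient singularities: étale-locally around any point of $Y$, $Y$ looks like $U/\Gamma$ where $U$ is smooth and $\Gamma$ is a finite group. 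To upgrade "finite quotient singularity" to genuine smoothness, I would note that for the diagonal torus action here the stabilizer of a stable point in $T$ acts \emph{trivially} on a transverse slice (the stabilizer sits inside $T$, which acts on the slice through its character weights, and one checks the relevant weights are nonzero precisely because the point is stable), so in fact the slice representation is free and $Y$ is smooth. Alternatively, and more cleanly, I would argue that since $X(w)^s_T(\mathcal{L}(\frac{n}{2}\omega_2))$ is smooth and the $T$-action there is scheme-theoretically free modulo a central finite subgroup acting trivially on the quotient, the quotient map is a fiber bundle (in the étale topology) with smooth fibers, whence $Y$ is smooth.

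The main obstacle is the last step: controlling the finite stabilizers carefully enough to conclude \emph{smoothness} of $Y$ rather than merely finite-quotient-singularity. The resolution is that for the torus $T$ acting on a point of the Grassmannian, the isotropy group in $T$ is determined by the weights appearing in the corresponding point of $\wedge^2 V$, and the finite part of this isotropy acts on the normal slice through these same weights; stability forces these weights to span a full-rank sublattice, so after quotienting by the (global, constant) kernel of the $T$-action the residual stabilizer is trivial on the slice. Hence $\varphi$ is a principal bundle over $Y$ for the effectively-acting quotient torus, and $Y$, being the base of a smooth principal bundle with smooth total space, is smooth. This gives the claim that all points of $Y$ are smooth points of $X$.
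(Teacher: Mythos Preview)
Your overall strategy matches the paper's: show that the stabilizer in $T$ of every stable point is exactly the center $Z(G)$, pass to the adjoint torus $T_{ad}$, conclude that the quotient map on the stable locus is a principal $T_{ad}$-bundle, and deduce smoothness of $Y$. However, the crucial step---identifying the stabilizer with $Z(G)$---is asserted rather than proved in your sketch, and one of your justifications is garbled.

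Specifically, your claim that the stabilizer ``acts trivially on a transverse slice'' because ``the relevant weights are nonzero'' is backwards: nonzero weights mean \emph{nontrivial} action on the slice. What you need is not triviality on the slice but rather that the stabilizer itself is $Z(G)$, which acts trivially on all of $G/Q$. In your final paragraph you come closer, but ``stability forces these weights to span a full-rank sublattice'' only recovers finiteness of the stabilizer; it does not by itself imply the stabilizer is $Z(G)$. A full-rank sublattice of the root lattice can have nontrivial index, and then the intersection of kernels would be strictly larger than the center. The paper closes this gap concretely: it writes a stable point as $x = u_{\beta_1}(t_1)\cdots u_{\beta_m}(t_m)\, vQ/Q$ with $t_j \neq 0$, so that $T_x = \bigcap_j \ker(\beta_j)$; finiteness forces the $\beta_j$ to span $X(T)\otimes\mathbb{Q}$, and then it invokes \cite[Lemma~3.2]{K} to conclude that the simple roots $S$ already lie in the $\mathbb{Z}$-lattice $\sum_j \mathbb{Z}\beta_j$, whence $T_x = \bigcap_i \ker(\alpha_i) = Z(G)$. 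This lattice step (that in type $A$ a $\mathbb{Q}$-spanning set of roots generates the full root lattice over $\mathbb{Z}$) is exactly what your argument is missing.
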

\begin{proof}
	We also denote the restriction of the GIT map $\varphi$ to $X(w)^{s}_{T}(\mathcal{L}(\frac{n}{2}\omega_2))$ by $\varphi.$
	Then we have the following commutative diagram:
	\[ \begin{tikzcd}
	X(w)^{s}_{T}(\mathcal{L}(\frac{n}{2}\omega_2)) \arrow{r}{\varphi} \arrow[swap]{d}{\imath} & Y \arrow{d}{\imath} \\%
	X(w)^{ss}_{T}(\mathcal{L}(\frac{n}{2}\omega_2)) \arrow{r}{\varphi}& X
	\end{tikzcd}
	\]
	
	We note that $\varphi: X(w)^{s}_{T}(\mathcal{L}(\frac{n}{2}\omega_2))\longrightarrow Y,$ is a geometric quotient of $X(w)^{s}_{T}(\mathcal{L}(\frac{n}{2}\omega_2))$ by $T.$ Moreover, for each point $x\in X(w)^{s}_{T}(\mathcal{L}(\frac{n}{2}\omega_2)),$ stabilizer $T_{x}$ of $x$ in $T$ is finite. Suppose $x$ is a stable point of $X(w)^{s}_{T}(\mathcal{L}(\frac{n}{2}\omega_2))$ such that $x\in BvQ/Q$ for some $v\le w \in W^{Q}.$ Let $R^{+}(v^{-1})$ denote the set of all positive roots made negative by $v^{-1}.$  Choose a subset $\{\beta_{1},\ldots,\beta_{m}\}$ of $R^{+}(v^{-1})$ such that $x=u_{\beta_{1}}(t_{1})\cdots u_{\beta_{m}}(t_{m})vQ/Q$ with $u_{\beta_{j}}(t_{j})$ in the root subgroup $U_{\beta_{j}},$ $t_{j}\neq0$ for $j=1,..., m.$ The stabilizer $T_{x}$ of $x$ is $\cap_{i=1}^{m} ker(\beta_{j}).$ 
	
Since the stabilizer subgroup $T_{x}$ is finite, the kernel	of the homomorphism $\psi: T\longrightarrow (\mathbb{C}^{\times})^{m}$ defined by $\psi(t)=(\beta_{1}(t),\beta_{2}(t),\ldots, \beta_{m}(t))$ is finite. Let $T'=\phi(T).$ Then $\psi$ induces an injective homomorphism $\tilde{\psi}: X(T')\longrightarrow X(T).$ of character groups such that $\tilde{\psi}(X(T'))$ has a finite index in $X(T).$ Hence, the subset $\{\beta_{1},\ldots,\beta_{m}\}$ of $R^{+}(v^{-1})$ generates the $\mathbb{Q}$-vector space spanned by $S.$ Therefore, by \cite[Lemma 3.2]{K}, $S$ is in the lattice $\sum\limits_{i=1}^{m}\mathbb{Z}\beta_{i}.$ Hence, we have  $T_{x}=\bigcap_{i=1}^{m}ker(\beta_{j})=\bigcap_{i=1}^{n-1}ker(\alpha_{i})=Z(G).$

Let $\pi: G\longrightarrow PSL(n,\mathbb{C})$ be the natural homomorphism. Let $T_{ad}=\pi(T).$  Then $\mathcal{L}(\frac{n}{2}\omega_{2})$ is also $T_{ad}$ linearized. Therefore, $X(w)^{s}_{T}(\mathcal{L}(\frac{n}{2}\omega_{2}))=X(w)^{s}_{T_{ad}}(\mathcal{L}(\frac{n}{2}\omega_{2})).$

 Hence, working with $G_{ad},$ we may assume $T_{x}=\{e\}.$ Thus $\varphi: X(w)^{s}_{T}(\mathcal{L}(\frac{n}{2}\omega_2)) \longrightarrow Y,$ is a principal $T$-bundle. Therefore, $Y$ is smooth as $X(w)^{s}_{T}(\mathcal{L}(\frac{n}{2}\omega_2))$ is smooth. Since $Y$ is an open subset of $X,$  $Y$ is contained in the smooth locus of $X.$
\end{proof}

\begin{proposition}\label{prop 3.3}
Let $\xi \in X(w_{min}^{ss})^{ss}_{T}(\mathcal{L}(\frac{n}{2}\omega_{2}))\subseteq X(w)^{ss}_{T}(\mathcal{L}(\frac{n}{2}\omega_{2})) .$ Let $K=\{v\in W^{P^{\alpha_{\frac{n}{2}}}}: v\xi\in \bigcup\limits_{w_{min}^{ss}\le u\leq w} BuQ/Q\},$ and
$L=\{v\varphi(\xi): v\in K\}.$ Then the set of all singular points of $X$ is a subset of $L.$
\end{proposition}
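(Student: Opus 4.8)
The plan is to show that every singular point of $X$ lies in the image of the non-stable locus, and then to pin down that image combinatorially. The starting observation is \cref{proposition7.2}: the open set $Y = T\backslash X(w)^{s}_{T}(\mathcal{L}(\tfrac{n}{2}\omega_2))$ consists entirely of smooth points of $X$. Hence the singular locus $X_{\mathrm{sing}}$ is contained in the closed complement $X \setminus Y = \varphi\big(X(w)^{ss}_{T}(\mathcal{L}(\tfrac{n}{2}\omega_2)) \setminus X(w)^{s}_{T}(\mathcal{L}(\tfrac{n}{2}\omega_2))\big)$. So the first step is to understand this ``strictly semistable'' locus downstairs in $X(w)$.

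The key geometric input is that, by the discussion preceding \cref{proposition7.2} (using \cref{proposition7.1} of Brion--Polo), $X(w)^{ss}_{T}(\mathcal{L}(\tfrac{n}{2}\omega_2))$ is contained in the smooth locus $X(w)_{sm} = P_w w Q/Q$ and decomposes as $\bigcup_{w^{ss}_{\min}\le u\le w}(BuQ/Q)^{ss}_T(\mathcal{L}(\tfrac{n}{2}\omega_2))$. The next step is to identify which semistable points fail to be stable. By \cref{lemma3.1} (or rather its setup), $X(u)^{s}_T \neq \emptyset$ exactly when $u \ge w^{s}_{\min}$; the Schubert varieties $X(u)$ with $w^{ss}_{\min}\le u$ but $u \not\ge w^{s}_{\min}$ — equivalently $u = w^{ss}_{\min}$ itself — carry semistable but no stable points. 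More precisely, a point $x\in X(w)^{ss}_T$ is non-stable precisely when it (or rather its limit/translate behavior) forces it into the cell structure of $X(w^{ss}_{\min})$; concretely, the strictly semistable points are those $x$ for which there is a destabilizing one-parameter subgroup with $\mu = 0$, and by the Kempf--Ness/Seshadri numerical analysis this happens exactly on the $W$-translates meeting $BuQ/Q$ for $u \le w^{ss}_{\min}$. Thus I would argue: $X(w)^{ss}_T \setminus X(w)^{s}_T \subseteq \bigcup_{v\in W}\, v\cdot X(w^{ss}_{\min})^{ss}_T \cap X(w)^{ss}_T$, i.e. the strictly semistable points all arise as $W$-translates of points of $X(w^{ss}_{\min})^{ss}_T$ that happen to land back in $X(w)^{ss}_T$.

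Now fix $\xi \in X(w^{ss}_{\min})^{ss}_T(\mathcal{L}(\tfrac{n}{2}\omega_2))$. Since $T\backslash\backslash X(w^{ss}_{\min})^{ss}_T$ is a single point (as noted after \cref{cor3.4}, the unique $T$-invariant standard monomial being $\prod_{k=1}^{n/2}p_{k,\frac{n}{2}+k}$), the $T$-orbit of $\xi$ is the unique closed orbit, and so \emph{up to the $T$-action} there is essentially one such $\xi$ to consider; every strictly semistable point of $X(w)$ is $W$-conjugate to a point in this orbit. Therefore the image $\varphi(x)$ of any strictly semistable $x$ equals $v\varphi(\xi)$ for some $v\in W$ with $v\xi\in X(w)^{ss}_T = \bigcup_{w^{ss}_{\min}\le u\le w}BuQ/Q$ — that is, $v\in K$ in the notation of the statement. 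Hence $X_{\mathrm{sing}} \subseteq \{v\varphi(\xi) : v\in K\} = L$, which is the claim. The main obstacle I expect is the second step: justifying rigorously that \emph{every} strictly semistable point of $X(w)$ is a $W$-translate of a point in the $T$-orbit of $\xi$ — one must run the Hilbert--Mumford numerical function carefully, using \cite[Lemma 5.1]{Ses} together with the Weyl-group symmetry of $\mu$, to see that a point with a $\mu=0$ destabilizer can, after a Weyl translation, be pushed into the cells $BuQ/Q$ with $u\le w^{ss}_{\min}$, and that on those cells the semistable points form a single $T$-orbit closure. The rest (smoothness of $Y$, the decomposition of $X(w)^{ss}_T$) is already in hand from \cref{proposition7.2} and \cref{proposition7.1}.
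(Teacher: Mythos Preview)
Your approach is essentially that of the paper: reduce the singular locus to $X\setminus Y$ via \cref{proposition7.2}, identify the strictly semistable locus of $X(w)$ with $W$-translates of $(Bw^{ss}_{\min}Q/Q)^{ss}_T$, and then use that this cell maps under $\varphi$ to the single point $\varphi(\xi)$ (because of the unique $T$-invariant degree-one standard monomial). Two remarks on the places where your argument is looser than the paper's.

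First, the ``obstacle'' you flag is not handled in the paper by rerunning Hilbert--Mumford from scratch. The paper simply invokes \cite[Lemma~2.1]{KP} to obtain the equality
\[
X(w)^{ss}_{T}(\mathcal{L}(\tfrac{n}{2}\omega_2))\setminus X(w)^{s}_{T}(\mathcal{L}(\tfrac{n}{2}\omega_2))
=\bigcup_{v\in K} v\,(Bw^{ss}_{\min}Q/Q)^{ss}_T(\mathcal{L}(\tfrac{n}{2}\omega_2))
\]
directly; so the decomposition you are trying to argue towards is already available as a citation, and the Kempf--Ness/Seshadri analysis you sketch is unnecessary.

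Second, you pass from ``some $v\in W$'' to ``$v\in K$'' without comment, but $K$ is indexed by $W^{P^{\alpha_{n/2}}}$, not by all of $W$. The paper supplies the missing step: for $v\in W_{P^{\alpha_{n/2}}}$ one has $v\,(Bw^{ss}_{\min}Q/Q)^{ss}_T=(Bw^{ss}_{\min}Q/Q)^{ss}_T$, so one may replace an arbitrary $v\in W$ by its minimal coset representative in $W^{P^{\alpha_{n/2}}}$ without changing the translate. With these two points added, your outline becomes the paper's proof.
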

\begin{proof}
Let $X_{\text{Sing}}$ be the set of all singular points of $X.$ Then by \cref{proposition7.2}, $X_{\text{Sing}}$ is a subset of $X\setminus Y.$ 

We observe that for $v\in W_{P^{\alpha_{\frac{n}{2}}}},$  $v(Bw_{min}^{ss}Q/Q)^{ss}_{T}(\mathcal{L}(\frac{n}{2}\omega_{2}))=(Bw_{min}^{ss}Q/Q)^{ss}_{T}(\mathcal{L}(\frac{n}{2}\omega_{2})).$ Therefore, by \cite[Lemma 2.1, p.470]{KP}, $X(w)^{ss}_{T}(\mathcal{L}(\frac{n}{2}\omega_{2}))\setminus X(w)^{s}_{T}(\mathcal{L}(\frac{n}{2}\omega_{2}))=\bigcup\limits_{v\in K}v(Bw_{min}^{ss}Q/Q)^{ss}_{T}(\mathcal{L}(\frac{n}{2}\omega_{2})).$ Since $\xi \in X(w_{min}^{ss})^{ss}_{T}(\mathcal{L}(\frac{n}{2}\omega_{2}))$ and there is a unique $T$-invariant standard monomial in degree one whose restriction to $Bw_{min}^{ss}Q/Q$ is non-zero, we have $\varphi((Bw_{min}^{ss}Q/Q)^{ss}_{T}(\mathcal{L}(\frac{n}{2}\omega_{2})))=\varphi(\xi).$ Further, we note that $\varphi(v\xi)=v\varphi(\xi).$ Therefore, we have  $X_{\text{Sing}}\subseteq X\setminus Y=L.$ 
\end{proof}
\begin{corollary}\label{cor7.4}
	$X_{\text{Sing}}$ is finite.
\end{corollary}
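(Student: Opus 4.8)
The plan is to read off finiteness directly from \cref{prop 3.3}. That proposition shows that $X_{\text{Sing}} \subseteq L$, where $L = \{v\varphi(\xi) : v \in K\}$ and $K = \{v \in W^{P^{\alpha_{\frac{n}{2}}}} : v\xi \in \bigcup_{w_{min}^{ss} \le u \le w} BuQ/Q\}$ for the fixed $\xi \in X(w_{min}^{ss})^{ss}_{T}(\mathcal{L}(\tfrac{n}{2}\omega_2))$. Hence it suffices to argue that $L$ is a finite set.

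First I would note that $W^{P^{\alpha_{\frac{n}{2}}}}$ is finite: it is the set of minimal length coset representatives of $W/W_{P^{\alpha_{\frac{n}{2}}}}$ inside the finite Weyl group $W = S_n$, and, as recalled in \cref{section2}, it is in order-preserving bijection with the finite set $I(\tfrac{n}{2}, n)$. Since $K \subseteq W^{P^{\alpha_{\frac{n}{2}}}}$, the set $K$ is finite, and therefore $L = \{v\varphi(\xi) : v \in K\}$, being the image of the single point $\varphi(\xi)$ under the finitely many elements of $K$, is finite. Combining this with the inclusion $X_{\text{Sing}} \subseteq L$ from \cref{prop 3.3} yields that $X_{\text{Sing}}$ is finite.

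I do not anticipate any genuine difficulty here: all the substantive work lies in \cref{proposition7.2} (the stable locus maps into the smooth locus of $X$) and \cref{prop 3.3} (confining the singular points to $L$), and the corollary is an immediate consequence of the finiteness of the Weyl group of $SL(n,\mathbb{C})$. The only point worth stating explicitly is that $K$, being a subset of $W^{P^{\alpha_{\frac{n}{2}}}}$, is finite, which forces $L$ — and hence $X_{\text{Sing}}$ — to be finite; the exact count of singular points of $T\backslash\backslash G_{2,n}$ is then pinned down separately in \cref{cor7.10}.
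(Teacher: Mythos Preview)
Your proposal is correct and follows exactly the same approach as the paper, which simply records that the corollary follows from \cref{prop 3.3}; you have merely made explicit the trivial observation that $K$ (and hence $L$) is finite because it sits inside the finite set $W^{P^{\alpha_{n/2}}}$.
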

\begin{proof}
follows from Proposition \ref{prop 3.3}.
\end{proof}

We have already observed that $X$ is smooth when $w=w_{min}^{s}$ (see Proposition \ref{lemma3.5}). Thus, we are interested in studying the singularity of $X,$ when $w^{s}_{min}< w.$ 

Let $w=(s_b\cdots s_{\frac{n}{2}}\cdots s_1)(s_{n-1}\cdots s_2).$ Then in one line notation we have $w=(b+1, n).$ Note that $b \geq \frac{n}{2}+1,$ as $w_{min}^{s}<w.$ Let $\xi, \varphi$ be as defined above and  $K,L$ be as defined in Proposition \ref{prop 3.3}.  By Corollary \ref{cor7.4}, $X_{Sing}$ is finite and $X_{Sing}\subseteq L.$  Next we prove that $L=X_{Sing}.$ Note that to prove $L=X_{Sing},$ it is enough to show that $\varphi(\xi)$ is a singular point of $X.$

To proceed further we first introduce some definition and notation following the article by Jean-Marc Dr\'ezet (see \cite[p. 28--29]{Dre}).

Let $H$ be a reductive group acting on an algebraic variety $Z.$ Given a character $\chi$ of $H,$ we define the $H$-line bundle $\mathcal{L}(\chi)$ associated to $\chi$ as follows:
The underlying line bundle is $\mathcal{O}_{Z}=Z\times \mathbb{C}$ and the action of $H$ is 
\begin{center}
$H\times Z\times \mathbb{C}\longrightarrow Z\times\mathbb{C} $

$(h,z,t)\longmapsto (hz, \chi(h)t).$
\end{center}

Let $x=\begin{pmatrix}\\
&x_{1 \frac{n}{2}} & 0\\
&x_{2 \frac{n}{2}} & 0\\
&\vdots & \vdots\\
&x_{\frac{n}{2}-1 \frac{n}{2}} & 0\\
&1 & x_{\frac{n}{2}+1 n}\\
&0 & x_{\frac{n}{2}+2 n}\\
&\vdots & \vdots \\
&0 & x_{n-1 n}\\
&0 & 1
\end{pmatrix} \in Bw^{ss}_{min}Q/Q$ be such that $(\prod\limits_{i=1}^{\frac{n}{2}-1}x_{i,\frac{n}{2}}) (\prod\limits_{i=\frac{n}{2}+1}^{n-1}x_{i,n})\neq 0.$

Since $\prod\limits_{i=1}^{\frac{n}{2}} p_{i\frac{n}{2}+i}$ is the unique monomial (up to a non zero scalar) of $H^0(X(w_{min}^{ss}),\mathcal{L}(\frac{n}{2}\omega_{2}))^{T}$ such that $\prod\limits_{i=1}^{\frac{n}{2}} p_{i\frac{n}{2}+i}(x)\neq 0,$ $x \in X(w_{min}^{ss})^{ss}_{T}(\mathcal{L}(\frac{n}{2}\omega_{2})).$ Since $\varphi((Bw^{ss}_{min}Q/Q)^{ss})=\varphi(\xi),$ we have $x\in \varphi^{-1}(\varphi(\xi)).$ The stabilizer $T_{x}$ of $x$ in $T$  is $ \bigcap\limits_{i\neq \frac{n}{2}} ker (\alpha_{i}),$ that is $T_{x}=\lambda_{\frac{n}{2}},$ where $\lambda_{\frac{n}{2}}=\text{diag}(t^{\frac{n}{2}},\cdots, t^{\frac{n}{2}},t^{-\frac{n}{2}},\cdots,t^{-\frac{n}{2}}).$ Furthermore, since $Tx$ is a minimal dimensional orbit in $\varphi^{-1}(\varphi(\xi)),$  $Tx$ is closed in $\varphi^{-1}(\varphi(\xi)).$ Then we prove the following

\begin{proposition}\label{prop 7.5}
$\varphi(\xi)$ is a singular point of $X.$ 
\end{proposition}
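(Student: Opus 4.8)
The plan is to exhibit an explicit étale-local model of $X$ near $\varphi(\xi)$ as a GIT quotient of an affine space by the one-parameter subgroup $\lambda_{\frac{n}{2}}$ acting with weights of mixed sign, and to show that this quotient is not smooth at the image of the origin. First I would use Luna's étale slice theorem: since $Tx$ is a closed orbit in the fibre $\varphi^{-1}(\varphi(\xi))$ and $T_x = \lambda_{\frac{n}{2}}\cong \mathbb{G}_m$ (after passing to $G_{ad}$, as in the proof of Proposition~\ref{proposition7.2}), there is a $T_x$-stable locally closed slice $S$ through $x$ transverse to the orbit such that $T\times_{T_x} S \to X(w)^{ss}_T(\mathcal{L}(\frac{n}{2}\omega_2))$ is étale onto a saturated neighbourhood, and consequently $S\sslash T_x \to X$ is étale onto a neighbourhood of $\varphi(\xi)$. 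Since $X(w)$ is smooth at $x$ (it lies in $X(w)_{sm} = P_w w Q/Q$ by Proposition~\ref{proposition7.1}), $S$ can be taken to be a $T_x$-stable open subset of the tangent-space representation $V := T_xX(w)/T_x(Tx)$, so the local model of $X$ at $\varphi(\xi)$ is $V\sslash \lambda_{\frac{n}{2}}$ near the image of $0$.

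Next I would compute the $\lambda_{\frac{n}{2}}$-weights on $V$. Using the affine chart around $x$ displayed before the statement, the coordinates are the $x_{i,\frac{n}{2}}$ for $1\le i\le \frac{n}{2}-1$ (the normalized entry in row $\frac{n}{2}$, column $1$ is $1$) and the $x_{i,n}$ for $\frac{n}{2}+1\le i\le n-1$; the tangent directions to $X(w)$ at $x$ correspond, via $X(w)_{sm}=P_wwQ/Q$, to the root subgroups $U_{-\beta}$ with $\beta\in R^+(w^{-1})$ (equivalently the coordinate directions of the big cell of $X(w)$ through $x$, together with those of the $P_w$-orbit). Under $\lambda_{\frac{n}{2}}=\operatorname{diag}(t^{\frac{n}{2}},\dots,t^{\frac{n}{2}},t^{-\frac{n}{2}},\dots,t^{-\frac{n}{2}})$ the entries $x_{i,\frac{n}{2}}$ with $i\le \frac{n}{2}-1$ are fixed (weight $0$), the entries $x_{i,n}$ with $i\ge \frac{n}{2}+1$ carry weight $-n$ (both row and column indices exceed $\frac{n}{2}$, so this is $t^{-\frac{n}{2}}/t^{-\frac{n}{2}}=1$ — recompute carefully: the weight on $x_{ij}$ is $t^{(\text{row wt})-(\text{col wt})}$), while the tangent directions coming from $P_w$-translation — i.e. the $U_{\pm\alpha_k}$ with $k\neq b+1$ — include directions with strictly positive weight (those $\alpha_k$ straddling position $\frac{n}{2}$, such as $\alpha_{\frac{n}{2}}$ itself, which has $\lambda_{\frac{n}{2}}$-weight $n$) and strictly negative weight. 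The key point to extract is that $V$ decomposes as $V^+\oplus V^0\oplus V^-$ with $V^+\neq 0$ and $V^-\neq 0$ and $\dim V^0 = \frac{n}{2}-2$ (or similar), so that $V\sslash\mathbb{G}_m$ is a cone over a projective variety $\mathbb{P}(V^+)\times$-type object, and in particular has an isolated singularity at the image of $0$ unless $\dim V^+=\dim V^-=1$ with opposite unit weights — a degenerate case that I would rule out by a direct weight count when $w>w^s_{\min}$ (i.e. $b\ge \frac{n}{2}+1$).

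The cleanest finish is: $V\sslash \lambda_{\frac{n}{2}}$ is smooth at the image of $0$ only if the $\mathbb{G}_m$-action on $V/V^0$ is, up to the finite-index issue already handled, equivalent to the standard scaling action of $\mathbb{G}_m$ on $\mathbb{C}^a\oplus\mathbb{C}^b$ with weights $(1,\dots,1,-1,\dots,-1)$ where $\min(a,b)=1$; in all other cases the affine quotient $\operatorname{Spec}(\mathbb{C}[V]^{\mathbb{G}_m})$ has a singular point at $0$ (its tangent cone is the affine cone over a Segre-type variety of dimension $>$ the quotient dimension minus one). So the heart of the argument is the explicit determination of the nonzero weights occurring on $T_xX(w)$ and the verification that, as soon as $b\ge \frac{n}{2}+1$, at least two independent directions of each sign survive in $V/V^0$, which forces $\varphi(\xi)$ to be singular. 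I expect the main obstacle to be precisely this bookkeeping: identifying $T_xX(w)$ with a sum of root spaces via Proposition~\ref{proposition7.1}, correctly computing each $\lambda_{\frac{n}{2}}$-weight, and translating "the quotient of this representation by $\mathbb{G}_m$ is singular" into a clean, citable statement (e.g. via the grading of $\mathbb{C}[V]^{\mathbb{G}_m}$ and the dimension of its cotangent space at the irrelevant ideal, compared with $\dim X = \frac{n}{2}-1$). Everything else — the slice theorem reduction, the passage to $G_{ad}$, the closedness of $Tx$ in the fibre — is already set up in the preceding discussion.
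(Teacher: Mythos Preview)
Your strategy via Luna's \'etale slice theorem is headed in the right direction, but as written it is a sketch with real gaps: the weight computation on $T_xX(w)$ is left unfinished (you flag ``recompute carefully'' mid-argument and never return to it), the identification of the slice with a sum of root spaces is not carried out, and the closing criterion for when $V\sslash\mathbb{G}_m$ is smooth at the origin is asserted rather than proved. The paper bypasses all of this bookkeeping by citing a general criterion of Dr\'ezet \cite[Theorem~8.3]{Dre}: the only inputs are that (i) $X(w)^{ss}\setminus X(w)^s$ has codimension $b+1-\tfrac{n}{2}\ge 2$ in the smooth variety $X(w)^{ss}$ (this is exactly where the hypothesis $w>w^{s}_{\min}$, i.e.\ $b\ge \tfrac{n}{2}+1$, enters), (ii) $T$ acts freely on $X(w)^{s}$, and (iii) the stabilizer $T_x=\lambda_{\frac{n}{2}}$ of the closed-orbit point $x\in\varphi^{-1}(\varphi(\xi))$ admits a nontrivial character, namely $\chi=\alpha_{\frac{n}{2}}$. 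Dr\'ezet's theorem then yields that $\mathcal{O}_{X,\varphi(\xi)}$ is not a unique factorization domain; since regular local rings are UFDs, $\varphi(\xi)$ is singular.

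In effect, what you are trying to establish by an explicit slice-and-weight analysis --- that the $\mathbb{G}_m$-quotient of the slice is non-factorial at the origin --- is precisely the content of the cited theorem, and the only datum it needs is the single nontrivial character $\alpha_{\frac{n}{2}}|_{T_x}$, not the full weight decomposition of $T_xX(w)$. If you want to salvage your approach without redoing Dr\'ezet's argument, the cleanest fix is to drop the weight table entirely and invoke \cite[Theorem~8.3]{Dre} directly once the codimension-$2$ condition and the nontriviality of $\chi$ on $T_x$ are checked.
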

\begin{proof}
Recall that $X(w)^{ss} \subseteq  X(w)_{sm}.$ Further, $X(w)^{ss} \setminus X(w)^{s}=\bigcup\limits_{ v\in K  }v(Bw_{min}^{ss}Q/Q)^{ss}.$ Thus the  co-dimension of $X(w)^{ss} \backslash X(w)^{s}$ in $X(w)^{ss}$ is $b+1-\frac{n}{2}$ which is at least $2,$ as $b \geq \frac{n}{2}+1.$ Further $\varphi|_{X(w)^{s}}: X(w)^{s} \longrightarrow \varphi(X(w)^{s})$ is a geometric quotient and $T$ acts freely on $X(w)^{s}.$  

By the above discussion $x\in \varphi^{-1}(\varphi(\xi)),$ and $Tx$ is a close orbit in $\varphi^{-1}(\varphi(\xi))$ with $T_{x}=\lambda_{\frac{n}{2}}.$ Take $\chi=\alpha_{\frac{n}{2}}.$ Then $\chi$ is non trivial character on $\lambda_{\frac{n}{2}},$ i.e., $\chi(\lambda_{\frac{n}{2}})\neq {1}.$ Therefore, by \cite[Theorem 8.3, p.44]{Dre}, $\mathcal{O}_{X,\phi(\xi)}$ is not a unique factorization domain. Hence, $\varphi(\xi)$ is a singular point of $X.$ 
\end{proof}
\begin{corollary}
$L$ is precisely the set of singular points of $X.$
\end{corollary}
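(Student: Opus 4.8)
The plan is to upgrade the inclusion $X_{\text{Sing}} \subseteq L$ of Proposition \ref{prop 3.3} to an equality by showing that \emph{every} point $v\varphi(\xi)$, $v \in K$, is a singular point of $X$; the point $\varphi(\xi)$ itself is already handled by Proposition \ref{prop 7.5}, and the remaining points should be obtained from it by the residual action of $N_G(T)$ on the semistable locus.

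Concretely, I would fix $v \in K$ and a representative $n_v \in N_G(T)$ of $v$, and let $x \in Bw^{ss}_{\text{min}}Q/Q$ be the point singled out just before Proposition \ref{prop 7.5}, so that $Tx$ is the closed orbit of minimal dimension in $\varphi^{-1}(\varphi(\xi))$ with stabilizer $T_x = \lambda_{\frac{n}{2}}$. Since $X(w)$ is closed in $G/Q$, the quotient $X$ sits inside $\overline{X} := T\backslash\backslash (G/Q)^{ss}_T(\mathcal{L}(\frac{n}{2}\omega_2))$ with GIT fibres and distinguished closed orbits matching those computed in $X(w)^{ss}_T(\mathcal{L}(\frac{n}{2}\omega_2))$; in particular $Tx$ is the closed orbit of the fibre of $\overline{X}$ over $\varphi(\xi)$. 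As $n_v$ acts on $(G/Q)^{ss}_T(\mathcal{L}(\frac{n}{2}\omega_2))$ and descends to an automorphism of $\overline{X}$, the orbit $T(n_v x) = n_v(Tx)$ is the closed orbit over $v\varphi(\xi)$, with stabilizer $T_{n_v x} = n_v \lambda_{\frac{n}{2}} n_v^{-1}$, a one-dimensional subtorus of $T$. Moreover $n_v x \in v(Bw^{ss}_{\text{min}}Q/Q)^{ss}_T(\mathcal{L}(\frac{n}{2}\omega_2)) \subseteq X(w)^{ss}_T(\mathcal{L}(\frac{n}{2}\omega_2))$ by the proof of Proposition \ref{prop 3.3} (here we use $v \in K$), and $\varphi(n_v x) = v\varphi(\xi)$, since the orbit closures of $x$ and $\xi$ meet (because $\varphi$ is constant on $(Bw^{ss}_{\text{min}}Q/Q)^{ss}_T(\mathcal{L}(\frac{n}{2}\omega_2))$), hence so do those of $n_v x$ and $n_v \xi$.

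Now I would run the argument of Proposition \ref{prop 7.5} at the point $v\varphi(\xi)$: the codimension of $X(w)^{ss}_T(\mathcal{L}(\frac{n}{2}\omega_2)) \setminus X(w)^s_T(\mathcal{L}(\frac{n}{2}\omega_2))$ is still $b+1-\frac{n}{2} \geq 2$, and $\varphi$ still restricts to a free geometric quotient on $X(w)^s_T(\mathcal{L}(\frac{n}{2}\omega_2))$, so it remains only to exhibit a non-trivial character of $T_{n_v x}$. Taking $\chi' = v(\alpha_{\frac{n}{2}})$ one computes $\chi'\big(n_v \lambda_{\frac{n}{2}}(s) n_v^{-1}\big) = \alpha_{\frac{n}{2}}\big(\lambda_{\frac{n}{2}}(s)\big) = s^{n} \neq 1$, so $\chi'$ is non-trivial on $T_{n_v x}$. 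Hence, by \cite[Theorem 8.3, p.44]{Dre}, $\mathcal{O}_{X, v\varphi(\xi)}$ is not a unique factorization domain, and therefore $v\varphi(\xi)$ is a singular point of $X$. This gives $L \subseteq X_{\text{Sing}}$, and together with Proposition \ref{prop 3.3} we conclude $L = X_{\text{Sing}}$.

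I expect the main obstacle to be the bookkeeping in the second paragraph: checking that $X \hookrightarrow \overline{X}$ really does identify the GIT fibres and their distinguished closed orbits (which relies on $X(w)$ being closed in $G/Q$), and that the $N_G(T)$-action carries the closed orbit over $\varphi(\xi)$ onto the one over $v\varphi(\xi)$ and preserves its stabilizer up to conjugacy. Once these identifications are in place, the argument of Proposition \ref{prop 7.5} applies verbatim at each point of $L$ and the proof is complete.
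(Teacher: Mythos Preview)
Your proposal is correct and takes essentially the same approach as the paper, whose proof reads simply ``Follows from Proposition \ref{prop 3.3} and Proposition \ref{prop 7.5}''; you are making explicit the step the paper leaves implicit, namely that the argument of Proposition \ref{prop 7.5} applies at every point $v\varphi(\xi)\in L$ once one transports $x$ to $n_v x$ via the $N_G(T)$-action. Your detour through the ambient quotient $\overline{X}=T\backslash\backslash(G/Q)^{ss}_T(\mathcal{L}(\frac{n}{2}\omega_2))$ is a clean way to handle the fact that $n_v$ need not preserve $X(w)$, and the remaining ingredients (the conjugated stabilizer $n_v\lambda_{n/2}n_v^{-1}$, the nontrivial character $v(\alpha_{n/2})$, and the appeal to \cite[Theorem 8.3]{Dre}) go through exactly as you describe.
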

\begin{proof}
Follows from Proposition \ref{prop 3.3} and Proposition \ref{prop 7.5}.
\end{proof}
Next we compute the number elements in $L$ for $T\backslash \backslash (G_{2,n})^{ss}_{T}(\mathcal{L}(\frac{n}{2}\omega_{2})).$  
\begin{lemma}\label{lemma3.2}
Let $x_{1},x_{2},y_{1},y_{2}$ be positive integers such that $x_{1}\neq x_{2},$ $y_{1}\neq y_{2}.$ If $x_1 < y_1$ and $x_2 < y_2,$ then $(x_1, x_2)\uparrow < (y_1, y_2) \uparrow$.
\end{lemma}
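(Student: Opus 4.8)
The plan is to unwind the notation and then verify the two coordinate inequalities directly, without any case analysis. For a pair $(a,b)$ of distinct positive integers, $(a,b)\uparrow$ denotes the element of $I(2,n)$ obtained by listing $a$ and $b$ in increasing order, i.e. $(a,b)\uparrow=(\min\{a,b\},\max\{a,b\})$; the hypotheses $x_1\ne x_2$ and $y_1\ne y_2$ guarantee that $(x_1,x_2)\uparrow$ and $(y_1,y_2)\uparrow$ are genuine elements of $I(2,n)$. Since the order on $I(2,n)$ recalled in \cref{section2} is the componentwise one, it suffices to show $\min\{x_1,x_2\}\le\min\{y_1,y_2\}$ and $\max\{x_1,x_2\}\le\max\{y_1,y_2\}$, with strict inequality in at least one coordinate; in fact the argument below yields strictness in both.

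First I would treat the smaller coordinate. From $\min\{x_1,x_2\}\le x_1<y_1$ and $\min\{x_1,x_2\}\le x_2<y_2$ one obtains $\min\{x_1,x_2\}<y_1$ and $\min\{x_1,x_2\}<y_2$, hence $\min\{x_1,x_2\}<\min\{y_1,y_2\}$. Symmetrically, from $x_1<y_1\le\max\{y_1,y_2\}$ and $x_2<y_2\le\max\{y_1,y_2\}$ one obtains $x_1<\max\{y_1,y_2\}$ and $x_2<\max\{y_1,y_2\}$, hence $\max\{x_1,x_2\}<\max\{y_1,y_2\}$. Both coordinates of $(x_1,x_2)\uparrow$ are therefore strictly below the corresponding coordinates of $(y_1,y_2)\uparrow$, which is exactly $(x_1,x_2)\uparrow<(y_1,y_2)\uparrow$.

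There is essentially no obstacle here; the only point worth care is that we do not assume any order relation between $x_1$ and $x_2$ (nor between $y_1$ and $y_2$). Phrasing the argument purely through the bounds $\min\{x_1,x_2\}\le x_i$ and $y_i\le\max\{y_1,y_2\}$ sidesteps a case split on which entry of each pair is larger and keeps the proof to a couple of lines.
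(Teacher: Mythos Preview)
Your proof is correct and follows the same approach as the paper's: both reduce the claim to the two coordinate inequalities $\min\{x_1,x_2\}<\min\{y_1,y_2\}$ and $\max\{x_1,x_2\}<\max\{y_1,y_2\}$. The paper verifies these via a short case split on which of $x_1,x_2$ is smaller (respectively which of $y_1,y_2$ is larger), whereas you bypass the cases by using the universal bounds $\min\{x_1,x_2\}\le x_i$ and $y_i\le\max\{y_1,y_2\}$; the content is identical, yours is just slightly more streamlined.
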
 

\begin{proof}
It is enough to prove that min$\{x_1, x_2\}$ $<$ min$\{y_1, y_2\}$ and max$\{x_1, x_2\}$ $<$ max$\{y_1, y_2\}$.
	
Let min$\{x_1, x_2\} = x_1$. Then $x_1 < x_2$. We have $x_1 < y_1$ and $x_1 < x_2 < y_2$. So, $x_1 < y_1$ and $x_1 < y_2$.  Hence, $x_1 < $ min$\{y_1,y_2\}$. Similarly, if min$\{x_1, x_2\} = x_2$ then $x_2 < $ min$\{y_1,y_2\}$. Hence, min$\{x_1, x_2\}$ $<$ min$\{y_1, y_2\}$.
	
Let max$\{y_1, y_2\} = y_1$. Then $y_2 < y_1$. We have $x_1 < y_1$ and $x_2 < y_2 < y_1$. So, $x_1 < y_1$ and $x_2 < y_1$.  Hence, max$\{x_1,x_2\}$ $< y_1$. Similarly, if max$\{y_1, y_2\} = y_2$ then max$\{x_1,x_2\}$ $< y_2$. Hence, max$\{x_1, x_2\}$ $<$ max$\{y_1, y_2\}$.
	
Therefore, $(x_1, x_2)\uparrow < (y_1, y_2) \uparrow$.
\end{proof}
Let $R_{1}=H^0(G_{2,n}, \mathcal{L}(\frac{n}{2}\omega_{2}))^{T}$ and  dim($R_{1}$)=$d.$ Let $\{X_{i}: 1\le i\le d\}$ be the standard monomial basis of $R_{1}$ where $X_{1}=\prod\limits_{i=1}^{\frac{n}{2}} p_{i\frac{n}{2}+i}.$  
\begin{lemma}\label{lem 3.6}
Let $\xi$ be as above. Then given $w \in W^{S \backslash \{\alpha_\frac{n}{2}\}}$ such that $w \neq id$, and $ w \neq w_0^{S \backslash \{\alpha_{\frac{n}{2}}\}}$ there exists an $i \neq 1$ such that $X_i(w\xi) \neq 0.$
\end{lemma}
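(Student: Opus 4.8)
The statement concerns the point $\xi \in X(w^{ss}_{\min})^{ss}_T(\mathcal{L}(\tfrac n2\omega_2))$ and the standard monomial basis $\{X_i\}$ of $R_1 = H^0(G_{2,n},\mathcal{L}(\tfrac n2\omega_2))^T$, with $X_1 = \prod_{i=1}^{n/2}p_{i,\frac n2+i}$ the unique one (up to scalar) not vanishing on $Bw^{ss}_{\min}Q/Q$. We must show: for every $w \in W^{S\setminus\{\alpha_{n/2}\}}$ other than $\mathrm{id}$ and $w_0^{S\setminus\{\alpha_{n/2}\}}$, there is some $i\neq 1$ with $X_i(w\xi)\neq 0$. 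The natural approach is to translate the nonvanishing of $X_i$ at $w\xi$ into the nonvanishing of $w^{-1}X_i$ at $\xi$, i.e. to analyze the $T$-invariant standard monomials obtained from the $X_i$ by the action of the Weyl group element $w$, and to use the explicit description of $\xi$ in $Bw^{ss}_{\min}Q/Q$.

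\textbf{Step 1: reformulate via the support of $\xi$.} Since $\xi \in (Bw^{ss}_{\min}Q/Q)^{ss}_T$, the Plücker coordinates $p_{\underline i}$ that are nonzero at $\xi$ are exactly those $\underline i \le w^{ss}_{\min} = (\tfrac n2, n)$; in fact, as in the matrix description preceding Proposition \ref{prop 7.5}, $\xi$ has a representative supported so that $p_{i,\frac n2+i}(\xi)\neq 0$ for all $i$ and the surviving coordinates are determined by the generic entries $x_{i,n/2}$, $x_{i,n}$. Acting by $w\in W^{S\setminus\{\alpha_{n/2}\}} \cong I(2,n)$: a monomial $X_i = \prod_j p_{\tau_j}$ satisfies $X_i(w\xi)\neq 0$ iff $X_i(w\xi) = \prod_j p_{\tau_j}(w\xi) = \prod_j p_{w^{-1}\tau_j}(\xi)$ is nonzero, i.e. iff every factor $p_{w^{-1}\tau_j}$ is a coordinate surviving at $\xi$. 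So the task becomes: find a $T$-invariant standard monomial $X_i\neq X_1$ all of whose Plücker factors, after applying $w^{-1}$, land in the support of $\xi$.

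\textbf{Step 2: produce the monomial combinatorially.} The $T$-invariant standard monomials of degree one correspond (via their Young tableaux / multiset of columns) to perfect matchings of $\{1,\dots,n\}$ into $n/2$ pairs with an appropriate nonnegativity condition on the "winding"; $X_1$ is the matching $\{i,\tfrac n2+i\}$. The condition "$X_i\neq X_1$ and $X_i(w\xi)\neq 0$" should be rephrased: we need a $T$-invariant matching $M$, different from the one giving $X_1$, such that $wM$ (the matching with pairs $\{w(a),w(b)\}$ for $\{a,b\}\in M$) is again a valid $T$-invariant standard monomial and is supported on coordinates $\le w^{ss}_{\min}$. Concretely one looks for a pair appearing in $M$, say $\{a,\tfrac n2 + a\}$ kept from $X_1$ except for a single "swap": replace two pairs $\{a,\tfrac n2+a\}, \{b,\tfrac n2+b\}$ by $\{a, \tfrac n2 + b\},\{b,\tfrac n2+a\}$ (or a rotation thereof). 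Using Lemma \ref{lemma3.2} to control the sorted order $(\cdot,\cdot)\uparrow$ of the resulting pairs, one shows such a swap can always be chosen, depending on $w$, so that the image under $w^{-1}$ stays $\le (\tfrac n2, n)$ coordinatewise — unless $w=\mathrm{id}$ (where only $X_1$ works) or $w = w_0^{S\setminus\{\alpha_{n/2}\}}$ (the opposite extreme). The two excluded elements are precisely the cases where $w$ maps the "staircase" support onto itself or onto its complement, leaving no room for a nontrivial swap.

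\textbf{Step 3: verify nonvanishing and conclude.} Once such an $X_i$ is exhibited, one checks directly from the matrix form of $\xi$ that each Plücker factor of $w^{-1}X_i$ is one of the nonvanishing $p_{i,\frac n2+i}$ (or a product of surviving coordinates with nonzero generic value), hence $X_i(w\xi)\neq 0$; and $i\neq 1$ since $X_i\neq X_1$. \textbf{The main obstacle} I anticipate is making the combinatorial selection in Step 2 uniform: one must argue, for an \emph{arbitrary} $w\neq\mathrm{id}, w_0^{S\setminus\{\alpha_{n/2}\}}$, that at least one admissible swap exists and that the order condition $\le(\tfrac n2,n)$ genuinely fails only in the two excluded cases — this is where Lemma \ref{lemma3.2} and a careful case analysis on $w=(b_1,b_2)\in I(2,n)$ (distinguishing whether $b_1 \le \tfrac n2$ or $b_1 > \tfrac n2$, and the position of $b_2$) will be needed, and it is plausibly the technical heart of the proof.
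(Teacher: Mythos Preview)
Your reformulation in Step~1 is correct, and you have identified the right ingredients (the pullback $p_\tau(w\xi)=\pm p_{w^{-1}\tau}(\xi)$ and Lemma~\ref{lemma3.2}). But Step~2 misses the key simplifying idea and, as written, would leave you with a case analysis you have not carried out.

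The paper does not search among ``swaps'' of $X_1$ for a monomial compatible with $w$. Instead it \emph{builds the monomial directly from $w$}: set
\[
M \;=\; p_{(w(1),w(\frac n2+1))\uparrow}\,p_{(w(2),w(\frac n2+2))\uparrow}\cdots p_{(w(\frac n2),w(n))\uparrow},
\]
i.e.\ the $w$-translate of $X_1$'s own matching with each pair sorted. Three things follow immediately:
\begin{itemize}
\item[(a)] $M(w\xi)=\pm X_1(\xi)\neq 0$, since $p_{(w(k),w(\frac n2+k))\uparrow}(w\xi)=\pm p_{k,\frac n2+k}(\xi)$ for every $k$;
\item[(b)] $M$ is standard: because $w\in W^{S\setminus\{\alpha_{n/2}\}}$ one has $w(1)<\cdots<w(\tfrac n2)$ and $w(\tfrac n2+1)<\cdots<w(n)$, so Lemma~\ref{lemma3.2} applied to consecutive rows gives $(w(k),w(\tfrac n2+k))\uparrow<(w(k+1),w(\tfrac n2+k+1))\uparrow$;
\item[(c)] $M\neq X_1$: the last factor is $p_{(w(n/2),w(n))\uparrow}$, and $(w(\tfrac n2),w(n))\uparrow=(\tfrac n2,n)$ forces either $w(\tfrac n2)=\tfrac n2$ (whence $w=\mathrm{id}$) or $w(\tfrac n2)=n$ (whence $w=w_0^{S\setminus\{\alpha_{n/2}\}}$).
\end{itemize}
So $M=X_i$ for some $i\neq 1$, with no case analysis on $w$ whatsoever.

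Your swap construction works in the wrong direction: you are trying to guess $X_i$ first and then test whether $w^{-1}$ sends its factors into the support of $\xi$ (note also that Step~2 says ``$wM$'' where it should say ``$w^{-1}M$''). The paper instead starts from the support of $\xi$, pushes it forward by $w$, and observes that the result is automatically a standard monomial different from $X_1$. That reversal is precisely what eliminates the ``technical heart'' you anticipated.
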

\begin{proof}
Assume that $w \in W^{S \backslash \{\alpha_\frac{n}{2}\}}$ be such that $w \neq id$ and $ w \neq w_0^{S \backslash \{\alpha_{\frac{n}{2}}\}}.$
Note that $w(1) < w(2) < \cdots < w(\frac{n}{2})$ and $w(\frac{n}{2}+1) < w(\frac{n}{2}+2) < \cdots < w(n).$ Further, since $w\neq id, w_{0}^{S\setminus\{\alpha_{\frac{n}{2}}\}},$ we have $(w(\frac{n}{2}), w(n))\uparrow\neq (\frac{n}{2}, n).$ Indeed, if $(w(\frac{n}{2}), w(n))\uparrow= (\frac{n}{2}, n),$ then either $w(\frac{n}{2})=\frac{n}{2}$ or  $w(\frac{n}{2})=n.$ Now, $w=id$ or $w=w_{0}^{S\setminus\{\alpha_{\frac{n}{2}}\}}$ according as $w(\frac{n}{2})=\frac{n}{2}$ or $w(\frac{n}{2})=n.$

Let $M=p_{(w(1),w(\frac{n}{2}+1))\uparrow}p_{(w(2),w(\frac{n}{2}+2))\uparrow}\cdots p_{(w(\frac{n}{2}),w(n))\uparrow}.$	By using \cref{lemma3.2}, we see that $M$ is a standard monomial. Then $M=X_{i}$ where $i\neq 1.$

Now we prove that $M(w\xi) \neq 0$. Note that $w(1)$-th row of $w(\xi)=$ $1$-st row of $\xi$ and $w(\frac{n}{2}+1)$-th row of $w(\xi)=$ $\frac{n}{2}+1$-th row of $\xi$. Hence, $p_{(w(1),w(\frac{n}{2}+1))\uparrow}(w\xi) = \pm p_{1,\frac{n}{2}+1}(\xi)$. Similarly, $p_{(w(i),w(\frac{n}{2}+i))\uparrow}(w\xi) = \pm p_{i,\frac{n}{2}+i}(\xi)$. Therefore, $M(w\xi)=\pm \prod_{i=1}^{\frac{n}{2}}p_{i,\frac{n}{2}+i}(\xi) \neq 0$   
\end{proof}

Note that $(w_{0}^{S\setminus \{\alpha_{n/2}\}})^{-1}=w_{0}^{S\setminus \{\alpha_{n/2}\}}$ and $w_{0}^{S\setminus \{\alpha_{n/2}\}}\xi\in Bw_{min}^{ss}Q/Q.$ Thus, $\varphi(w_{0}^{S\setminus \{\alpha_{n/2}\}}\xi)=\varphi(\xi).$ On the other hand, we have the following 
\begin{lemma}\label{lemma 3.5}
Let $\xi,$ $\varphi$  be as defined above. If $\varphi(u\xi)=\varphi(v\xi)$ for some $u, v\in W^{S\setminus\{\alpha_{n/2}\}},$ then $v=u$ or $v=uw_{0}^{S\setminus \{\alpha_{n/2}\}}.$ 
\end{lemma}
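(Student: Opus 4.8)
The plan is to recast the equality $\varphi(u\xi)=\varphi(v\xi)$ as a statement about the stabiliser of the single point $\varphi(\xi)$ in the Weyl group, and then to pin this stabiliser down using \cref{lem 3.6}. Since $W=N_G(T)/T$ acts on $X$ and the GIT quotient map $\varphi$ is $W$-equivariant, the condition $\varphi(u\xi)=\varphi(v\xi)$ is equivalent to $g\,\varphi(\xi)=\varphi(\xi)$ with $g:=u^{-1}v$; so it suffices to show that every $g\in W$ fixing $\varphi(\xi)$ lies in $W_{P^{\alpha_{n/2}}}\cup w_0^{S\setminus\{\alpha_{n/2}\}}W_{P^{\alpha_{n/2}}}$, and then to read off the consequence for the minimal coset representatives $u,v$.

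First I would record the coordinates of $\varphi(\xi)$. By \cref{cor3.3} the ring $R=\bigoplus_d R_d$ is generated in degree one, so $X=\mathrm{Proj}(R)$ sits in $\mathbb{P}(R_1^{*})$ via the basis $X_1,\dots,X_d$, and for any semistable $\eta$ the point $\varphi(\eta)$ has homogeneous coordinates $(X_1(\eta),\dots,X_d(\eta))$. Now $X_1(\xi)\neq 0$, since $\xi$ is semistable and the $T$-invariants of $X(w_{min}^{ss})$ in every degree are spanned by a power of $X_1|_{X(w_{min}^{ss})}$; while $X_i(\xi)=0$ for $i\neq 1$, because such an $X_i$ (a degree one standard monomial different from $X_1$) must have a Pl\"ucker factor $p_\tau$ with $\tau\not\le w_{min}^{ss}$ — otherwise $X_i|_{X(w_{min}^{ss})}$ would be a $T$-invariant standard monomial of degree one, hence proportional to $X_1|_{X(w_{min}^{ss})}$, forcing $X_i=X_1$ by linear independence of standard monomials — and any such $p_\tau$ vanishes on $X(w_{min}^{ss})\ni\xi$. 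Hence $\varphi(\xi)=[1:0:\dots:0]$.

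Next, given $g\in W$ with $g\,\varphi(\xi)=\varphi(\xi)$, I would write $g=z\sigma$ with $z\in W^{S\setminus\{\alpha_{n/2}\}}$ the minimal representative of $gW_{P^{\alpha_{n/2}}}$ and $\sigma\in W_{P^{\alpha_{n/2}}}$. Recall from the proof of \cref{prop 3.3} that $\sigma\,(Bw_{min}^{ss}Q/Q)^{ss}_T=(Bw_{min}^{ss}Q/Q)^{ss}_T$ and that $\varphi$ collapses this cell to $\varphi(\xi)$; thus $\sigma$ fixes $\varphi(\xi)$, and therefore $\varphi(z\xi)=z\,\varphi(\sigma\xi)=z\,\varphi(\xi)=\varphi(\xi)=[1:0:\dots:0]$, i.e. $X_i(z\xi)=0$ for all $i\neq 1$. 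By the contrapositive of \cref{lem 3.6} this forces $z=\mathrm{id}$ or $z=w_0^{S\setminus\{\alpha_{n/2}\}}$, so $g\in W_{P^{\alpha_{n/2}}}\cup w_0^{S\setminus\{\alpha_{n/2}\}}W_{P^{\alpha_{n/2}}}$.

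Finally I would translate this through the identification $W^{S\setminus\{\alpha_{n/2}\}}\cong I(\tfrac{n}{2},n)$. In one-line notation $w_0^{S\setminus\{\alpha_{n/2}\}}$ is the block transposition $i\leftrightarrow i+\tfrac n2$, so right multiplication by it carries the minimal representative attached to a subset $A$ to the minimal representative attached to the complement $A^c$; in particular $u\,w_0^{S\setminus\{\alpha_{n/2}\}}$ is again the minimal representative of its coset. If $u^{-1}v\in W_{P^{\alpha_{n/2}}}$ then $vW_{P^{\alpha_{n/2}}}=uW_{P^{\alpha_{n/2}}}$, whence $v=u$; if $u^{-1}v\in w_0^{S\setminus\{\alpha_{n/2}\}}W_{P^{\alpha_{n/2}}}$ then $vW_{P^{\alpha_{n/2}}}=u\,w_0^{S\setminus\{\alpha_{n/2}\}}W_{P^{\alpha_{n/2}}}$ and, $u\,w_0^{S\setminus\{\alpha_{n/2}\}}$ being minimal, $v=u\,w_0^{S\setminus\{\alpha_{n/2}\}}$. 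The only point that requires care is the coset bookkeeping — that $u\,w_0^{S\setminus\{\alpha_{n/2}\}}$ is automatically a minimal representative, and that the $W_{P^{\alpha_{n/2}}}$-part of $g$ may be discarded because $W_{P^{\alpha_{n/2}}}$ fixes $\varphi(\xi)$; once \cref{lem 3.6} is in hand, this is essentially the whole proof and no serious obstacle remains.
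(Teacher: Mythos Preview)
Your proof is correct and follows essentially the same route as the paper's: reduce via $W$-equivariance to $\varphi(u^{-1}v\,\xi)=\varphi(\xi)$, factor $u^{-1}v$ as a minimal representative times an element of $W_{P^{\alpha_{n/2}}}$, use that the parabolic part fixes $\varphi(\xi)$, and then invoke \cref{lem 3.6} on the minimal-representative part. The only differences are expository---you make the coordinates $\varphi(\xi)=[1:0:\cdots:0]$ explicit and justify $u\,w_0^{S\setminus\{\alpha_{n/2}\}}\in W^{S\setminus\{\alpha_{n/2}\}}$ via the $I(\tfrac{n}{2},n)$ picture, whereas the paper leaves the former implicit and argues the latter from $-w_0(S\setminus\{\alpha_{n/2}\})=S\setminus\{\alpha_{n/2}\}$.
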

\begin{proof}
Assume that $\varphi(u\xi)=\varphi(v\xi).$ Since $\varphi$ is $W$-equivariant, we have $\varphi(u^{-1}v\xi)=\varphi(\xi).$ Let $u^{-1}v=u_{1}v_{1}$ where $u_{1}\in W^{S\setminus\{\alpha_{\frac{n}{2}}\}}$ and $v_{1}\in W_{S\setminus\{\alpha_{\frac{n}{2}}\}}.$ Since $v_{1}\in W_{S\setminus\{\alpha_{\frac{n}{2}}\}},$ $v_{1}\xi \in Bw_{min}^{ss}Q/Q.$ Thus, we have $\varphi(u_{1}\xi)=\varphi(\xi).$ Then, by 
\cref{lem 3.6}, we have $u_{1}=id$ or $u_{1}=w_{0}^{S\setminus \{\alpha_{n/2}\}}.$ Thus, we have  $v=u$ $mod~W_{S\setminus\{\alpha_{\frac{n}{2}}\}}$ or $v=uw_{0}^{S\setminus \{\alpha_{n/2}\}}$ $mod~ W_{S\setminus\{\alpha_{\frac{n}{2}}\}}.$ Further, note that
$-w_{0}(S\setminus\{\alpha_{\frac{n}{2}}\})=S\setminus\{\alpha_{ \frac{n}{2}}\}.$ Hence, $uw^{S\setminus\{\alpha_{n/2}\}}(\alpha_{j})$ is a positive root for $j\neq \frac{n}{2},$ which implies   $uw_{0}^{S\setminus \{\alpha_{n/2}\}}\in W^{S\setminus\{\alpha_{\frac{n}{2}}\}}.$ Therefore, proof follows.
\end{proof}
\begin{corollary}\label{cor7.10}Let $L$ be as defined above. Let Card$(W^{S\setminus\{\alpha_{\frac{n}{2}}\}})=A.$ Then 
Card$(L)$=$\frac{A}{2},$ i.e., $T\backslash\backslash G_{2,n}^{ss}(\frac{n}{2}\omega_{2})$ has $\frac{A}{2}$ singular points. 
\end{corollary}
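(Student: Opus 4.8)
The plan is to count the singular points of $X = T\backslash\backslash (G_{2,n})^{ss}_T(\mathcal{L}(\frac{n}{2}\omega_2))$ by using the preceding lemmas to show that the map $u \mapsto \varphi(u\xi)$ is a two-to-one surjection from $W^{S\setminus\{\alpha_{n/2}\}}$ onto $L$. We have already established (Proposition \ref{prop 3.3} together with the Corollary following Proposition \ref{prop 7.5}) that $L = \{v\varphi(\xi) : v \in K\}$ is exactly the singular locus $X_{\mathrm{Sing}}$, where $K = \{v \in W^{S\setminus\{\alpha_{n/2}\}} : v\xi \in \bigcup_{w^{ss}_{\min}\le u\le w} BuQ/Q\}$. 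In the case $w$ is the full big element, i.e. for $T\backslash\backslash G_{2,n}$ itself, one checks $K = W^{S\setminus\{\alpha_{n/2}\}}$: every $v \in W^{S\setminus\{\alpha_{n/2}\}}$ satisfies $v\xi \in BuQ/Q$ for some $u$ below the top element, since the whole Grassmannian is swept out. Hence $L = \{v\varphi(\xi) : v \in W^{S\setminus\{\alpha_{n/2}\}}\}$.

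First I would fix the surjection $f : W^{S\setminus\{\alpha_{n/2}\}} \to L$ given by $f(v) = v\varphi(\xi) = \varphi(v\xi)$ (using $W$-equivariance of the quotient map), which is surjective by the displayed description of $L$. Next, the fibres of $f$ are computed by \cref{lemma 3.5}: if $f(u) = f(v)$ then $v = u$ or $v = u\, w_0^{S\setminus\{\alpha_{n/2}\}}$. So each fibre has size at most $2$. To see the fibre has size exactly $2$, I would note that $u \ne u\, w_0^{S\setminus\{\alpha_{n/2}\}}$ always (since $w_0^{S\setminus\{\alpha_{n/2}\}} \ne \mathrm{id}$ as $n \ge 4$), and that the other member $u\,w_0^{S\setminus\{\alpha_{n/2}\}}$ again lies in $W^{S\setminus\{\alpha_{n/2}\}}$ — this is exactly the last computation in the proof of \cref{lemma 3.5}, using $-w_0(S\setminus\{\alpha_{n/2}\}) = S\setminus\{\alpha_{n/2}\}$, so that $u\,w_0^{S\setminus\{\alpha_{n/2}\}}(\alpha_j)$ is a positive root for all $j \ne \frac{n}{2}$. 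Thus every fibre of $f$ is a pair $\{u, u\,w_0^{S\setminus\{\alpha_{n/2}\}}\}$, and $f$ is exactly two-to-one.

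Combining these, $\mathrm{Card}(L) = \frac{1}{2}\,\mathrm{Card}(W^{S\setminus\{\alpha_{n/2}\}}) = \frac{A}{2}$. Since $L = X_{\mathrm{Sing}}$ by the earlier corollary, $T\backslash\backslash G_{2,n}^{ss}(\frac{n}{2}\omega_2)$ has exactly $\frac{A}{2}$ singular points.

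The only genuine subtlety — and the step I would write most carefully — is the claim that the two elements $u$ and $u\,w_0^{S\setminus\{\alpha_{n/2}\}}$ in a common fibre really do give the \emph{same} point of $L$ and are \emph{distinct} as Weyl group elements; the equality of images is handled by the remark just before \cref{lemma 3.5} (namely $(w_0^{S\setminus\{\alpha_{n/2}\}})^{-1} = w_0^{S\setminus\{\alpha_{n/2}\}}$ and $w_0^{S\setminus\{\alpha_{n/2}\}}\xi \in Bw^{ss}_{\min}Q/Q$ so $\varphi(w_0^{S\setminus\{\alpha_{n/2}\}}\xi) = \varphi(\xi)$), and the distinctness is clear since $A$ is even (an involution with no fixed points on $W^{S\setminus\{\alpha_{n/2}\}}$ forces this). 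Everything else is bookkeeping with the identifications already set up. I expect no further obstacle, as all the hard analytic and combinatorial content is in \cref{lem 3.6}, \cref{lemma 3.5}, and Proposition \ref{prop 7.5}.
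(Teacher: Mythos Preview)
Your proposal is correct and follows essentially the same approach as the paper: define the map $c\colon W^{S\setminus\{\alpha_{n/2}\}} \to L$ by $c(u) = \varphi(u\xi)$, observe it is surjective, and use \cref{lemma 3.5} to conclude each fibre is exactly $\{u,\, u\,w_0^{S\setminus\{\alpha_{n/2}\}}\}$. The paper's proof is terser (three lines), but your added justifications---why $K = W^{S\setminus\{\alpha_{n/2}\}}$ for the full Grassmannian, and why the two elements in a fibre are distinct and both lie in $W^{S\setminus\{\alpha_{n/2}\}}$---fill in details the paper leaves implicit.
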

\begin{proof}
Let $c: W^{S\setminus\{\alpha_{\frac{n}{2}}\}}\longrightarrow L$ be map defined by $c(u)=\varphi(u\xi)$ for $u\in W^{S\setminus\{\alpha_{\frac{n}{2}}\}}.$ Clearly, $c$ is surjective. On the other hand, by Lemma \ref{lemma 3.5}, $c^{-1}(\varphi(u\xi))=\{u, uw_{0}^{S\setminus \{\alpha_{\frac{n}{2}}\}} \}$ for $\varphi(u\xi)\in L.$ Therefore, Card$(L)$=$\frac{A}{2}.$ 
\end{proof}

\begin{example}{ $T\backslash\backslash(G_{2,6})^{ss}_{T}(\mathcal{L}(3\omega_2))$ is not smooth:}
 Let $X=T\backslash\backslash(G_{2,6})^{ss}_{T}(\mathcal{L}(3\omega_2)).$ 
	Recall that $X$ is a cubic hypersurface in the projective space $\mathbb{P}^{4}=Proj(\mathbb{C}[X_{1},X_{2},X_{3},X_{4},X_{5}]),$ given by   $F(X_{1},X_2,X_3,X_4,X_5)=X_3X_4^{2}-X_{1}X_{2}X_{5}+X_1X_3X_4-X_2X_3X_4+ X_2X_3X_5-X_3X_4X_5.$ Let $\xi \in (Bw_{min}^{ss}P^{\alpha_{2}}/P^{\alpha_{2}})^{ss}_{T}(\mathcal{L}(3\omega_2)).$ Then we have $\varphi((Bw_{min}^{ss}P^{\alpha_{2}}/P^{\alpha_{2}})^{ss}_{T}(\mathcal{L}(3\omega_2)))=\varphi(\xi)=p$ where  $p=[1:0:0:0:0].$ Let $\tilde{p}=(1,0,0,0,0)$ be a lift of $p$ in the affine cone of $X.$
	
	Then we have
	$\frac{\partial F}{\partial X_{1}}= X_{3}X_{4}-X_{2}X_{5};
	\hspace{.1cm}\frac{\partial F}{\partial X_{2}}=X_{3}X_{5}-X_{1}X_{5}-X_{3}X_{4};
	\hspace{.1cm} \frac{\partial F}{\partial X_{3}}=X_{4}^{2}+X_{1}X_{4}-X_{2}X_{4}+X_{2}X_{5}-X_{4}X_{5};
	\hspace{.1cm}\frac{\partial F}{\partial X_{4}}=2X_{3}X_{4}+X_{1}X_{3}-X_{2}X_{3}-X_{3}X_{5};
	\hspace{.1cm}\frac{\partial F}{\partial X_{5}}=-X_{1}X_{2}+X_{2}X_{3}-X_{3}X_{4}.$
	
	Thus $(\frac{\partial F}{\partial X_{1}}(\tilde{p})~~\frac{\partial F}{\partial X_{2}}(\tilde{p})~~\frac{\partial F}{\partial X_{3}}(\tilde{p})~~\frac{\partial F}{\partial X_{4}}(\tilde{p})~~\frac{\partial F}{\partial X_{5}}(\tilde{p}))=(0~~0~~0~~0~~0).$ Hence, $\tilde{p}$ is a singular point of the affine cone of $X.$ Therefore, $p$ is a singular point of $X.$ 
\end{example}

\begin{example}{ $T\backslash\backslash(X(6,8))^{ss}_{T}(\mathcal{L}(4\omega_2))$ is not smooth:}
Let $X=T\backslash\backslash(X(6,8))^{ss}_{T}(\mathcal{L}(4\omega_2)).$ \\
Recall that $X$ is a projective subvariety of the projective space\\ $\mathbb{P}^{8}=Proj(\mathbb{C}[X_{1},X_{2},X_{3},X_{4},X_{5},X_{6},X_{7},X_{8},X_{9}]),$ given by the following system of quadratic homogeneous polynomials:
	
	$F_{1}=X_2X_6-X_5X_8+X_4X_8-X_1X_8-X_2X_4+X_1X_2$
	
	$F_{2}=X_1X_3-X_2X_4+X_3X_6-X_3X_8+X_4X_8-X_4X_9$
	
	$F_{3}=X_2X_7-X_1X_7-X_3X_6+X_4X_9$
	
	$F_{4}=X_3X_6-X_5X_7$
	
	$F_{5}=X_1X_3-X_2X_4+X_2X_6-X_3X_8+X_4X_8-X_1X_9$.

Note that $\varphi((Bw_{min}^{ss}P^{\alpha_{2}}/P^{\alpha_{2}})^{ss}_{T}(\mathcal{L}(4\omega_2)))=\varphi(\xi)=p$ where  $p=[0:0:0:0:0:0:0:0:1].$ Let $\tilde{p}=(0,0,0,0,0,0,0,0,1)$ be a lift of $p$ in the affine cone of $X.$

\begin{equation*}
\tiny{\begin{pmatrix}
\frac{\partial F_{i}}{\partial X_{j}}
\end{pmatrix}=
\begin{pmatrix}
	  \\
	  &X_{2}-X_{8} & X_{6}-X_{4}+X_{1} & 0 & X_{8}-X_{2}& -X_{8}& X_{2}&0&X_{4}-X_{5}-X_{1}&0 \\
	  &X_{3} & -X_{4} & X_{1}+X_{6}-X_{8} & X_{8}-X_{2}-X_{9}& 0& X_{3}&0&X_{4}-X_{3}&-X_{4}\\
	   &-X_{7} & X_{7} & -X_{6} & X_{9}& 0& -X_{3}&X_{2}-X_{1}&0&X_{4}\\
	  &0 & 0 & X_{6} & 0& -X_{7}& X_{3}&-X_{5}&0&0\\
	   &X_{3}-X_{9} & X_{6}-X_{4} & X_{1}-X_{8} & X_{8}-X_{2}& 0& X_{2}&0&X_{4}-X_{3}&-X_{1}\\
\end{pmatrix}}
\end{equation*}
	
	\normalsize{Thus we have} \begin{equation*}
		\tiny{\begin{pmatrix}
			\frac{\partial F_{i}}{\partial X_{j}}
			\end{pmatrix}=
			\begin{pmatrix}
				\\
			&0 &0 &0 &0 &0 &0 &0 &0 &0 \\
			&0 &0 &0 &-1&0 &0 &0 &0 &0\\
			&0 &0 &0 &1 &0 &0 &0 &0 &0 \\
			&0 &0 &0 &0 &0 &0 &0 &0 &0\\
			&-1&0 &0 &0 &0 &0 &0 &0 &0\\
\end{pmatrix}}
\end{equation*}
\normalsize{Hence, $\tilde{p}$ is a singular point of the affine cone of $X,$ as the rank of $\begin{pmatrix}\frac{\partial F_{i}}{\partial X_{j}}(\tilde{p})\end{pmatrix}<9-5=4.$ Therefore, $p$ is a singular point of $X.$}

\end{example}


\begin{thebibliography}{KP}
	
\bibitem{Ber} M.G. Bergman, {\em The diamond lemma for ring theory} Adv. Math. 29(2):178–218. 


\bibitem{BL} M. Brion, V. Lakshmibai, {\em A geometric approach to Standard Monomial Theory} Representation Theory of the American Mathematical Society, vol. 7, no. 25, pp. 651-
680, 2003.


\bibitem{BP} M. Brion, P. Polo, {\em Generic singularities of certain Schubert varieties}, Mathematische Zeitschrift volume {\bf 231}, pages 301--324(1999).


\bibitem{BKS} S. Bakshi, S. Senthamarai Kannan, K. V. Subrahmanyam, {\em Torus quotients of Richardson varieties in the Grassmannian,} Communications in Algebra, vol. 48, issue 2, pages 891--914, 2019.
 
	


\bibitem{Cox} David Cox, John Little, Hal Schenck, {\em Toric varieties}. Graduate Studies in Mathematics, 124. American Mathematical Society, Providence, RI, 2011.

\bibitem{Dre}Jean-Marc Dr\'ezet, {\em Luna's slice theorem and applications}, Jaroslaw A. Wisniewski. Algebraic group actions and quotients, Hindawi Publishing Corporation, pp. 39--90, 2004, 977-5945-12-7.


\bibitem{GS} Grayson, Daniel R., Stillman, Michael E., {\em Macaulay2, a software system for research in algebraic geometry}, Available at \url{https://faculty.math.illinois.edu/Macaulay2/}.

\bibitem{GM} I. Gelfand, R. MacPherson, {\em Geometry in grassmannians and a generalization of the dilogarithm,} Advances in mathematics, vol. 44, no. 3, p. 279--312,1982.


\bibitem{Hum1} J. E. Humphreys, {\em Introduction to lie Algebras and Representation Theory}, Springer-Verlag, Berlin, Heidelberg, New York, 1972.

\bibitem{Hum2} J. E. Humphreys, {\em Linear Algebraic Groups}, Springer-Verlag, Berlin, Heidelberg, New York, 1975.


\bibitem{R} Robin Hartshorne, {\em Algebraic Geometry,  Graduate Texts in Mathematics book series }(GTM, volume 52).

\bibitem{How} B. J. Howard, {\em Matroids and geometric invariant theory of torus actions on flag spaces,} Journal of Algebra, 312(2007), p. 527--541.

\bibitem{HK} C. Hausmann,  A. Knutson, {\em Polygon spaces and grassmannians,} L'Enseignement Mathematique 43, no. 1-2, 1997, 173--198.


\bibitem{HMSV1} B. J. Howard, J. Millson, A. Snowden, and R. Vakil, {\em The equations for the moduli space of $n$ points on the line}, Duke Math. J. 146 no.2 (2009), 175--226.


\bibitem{HMSV2} B. J. Howard, J. Millson, A. Snowden, and R. Vakil, {\em The ideal of relations for the ring of invariants of $n$ points on the line}, J. Eur. Mtah. Soc.,14 no.1 (2012), 1--60.


\bibitem{Jan} J. C. Jantzen, {\em Representations of Algebraic Groups,} second edition, Mathematical Surveys and Monographs, vol. 107, 2003.

\bibitem{K1} S. S. Kannan, {\em Torus quotients of homogeneous spaces,} Proceeding Mathematical Sciences, vol. 2, 1998.

\bibitem{K2} S. S. Kannan, {\em Torus quotients of homogeneous spaces II,} Proceeding Mathematical Sciences, vol. 109,no. 1, p. 23--39, 1999.

\bibitem{K} S Senthamarai Kannan, {\em GIT related problems of the flag variety for the action of a
	maximal torus. In Groups of Exceptional Type, Coxeter Groups and Related Geometries}, pages 189–-203. Springer, 2014.


\bibitem{KP}
S. S. Kannan,  S. K. Pattanayak, {\em Torus quotients of homogeneous spaces-minimal dimensional Schubert varieties admitting semi-stable points,} Proc. Indian Acad. Sci. (Math.Sci.) 119(4), 2009, 469--485.

\bibitem{KS}
S. S. Kannan,  P. Sardar, {\em Torus quotients of homogeneous spaces of the general linear group and the standard representation of certain symmetric groups,} Proceedings-Mathematical Sciences, vol. 119(4),no. 1, p.81, 2009.

\bibitem{Kap1} M. M. Kapranov, {\em Chow quotients of Grassmannians,} I.I. M. Gel'fand Seminar, Adv. Soviet Math. 16, Part 2, Amer. Math.Soc., Providence, RI (1993) p.29--110.

\bibitem{Kap2} M. M. Kapranov, {\em Veronese curves and Grothendieck-Knudsen moduli space $\overline{M_{0,n}}$,} J. Algebraic Geom. 2(2) (1993) 239--262.

\bibitem{Ke} A. Kempe, {\em On regular difference terms}, Proc. London Math. Soc. 25 (1894), 343--350.



\bibitem{Kum} Shrawan Kumar, {\em Descent of line bundles to GIT quotients of flag varieties by maximal torus,} Transformation groups, vol.13, no. 3-4, p. 757--771,2008.

\bibitem{LS} V. Lakshmibai, K. N. Raghavan, {\em Standard Monomial Theory Invariant Theoretic Approach, Encyclopaedia of Mathematical Sciences}, volume 137.

\bibitem{Mumford} D. Mumford, J. Fogarty, F. Kirwan, {\em Geometric Invariant Theory,} Third Edition, Springer-Verlag, Berlin, Heidelberg, New York, 1994.

\bibitem{New} P. E. Newstead, Lecture on {\em Introduction to Moduli Problem And Orbit Spaces}, Tata Institute of Fundamental Research, Bombay, 1978.
	
\bibitem{NP} Arpita Nayek, S. K. Pattanayak, Shivang Jindal, {\em Projective normality of torus quotients of flag varieties}, Journal of Pure and Applied Algebra, vol. 224, issue 10, 2020.


\bibitem{Ses} C.S.Seshadri, {\em Quotient spaces modulo reductive algebraic groups}, Ann. Math. 95 (1972)
511–556

\bibitem{Sko} A. N. Skorobogatov, {\em On swinnerton-dyer,} in  Annales de la facult\'e des science de Toulouse, vol. 2, 1993. 

 
\end{thebibliography}
\end{document}